\documentclass[12pt]{amsart}
\usepackage{amssymb}
\usepackage{url}
\usepackage{graphicx}
\usepackage{tikz-cd}
\usepackage{mathtools}
\usepackage{upgreek}
\usepackage{comment}
\usepackage{stmaryrd}
\usepackage{ulem}
\usepackage[all]{xy}

\allowdisplaybreaks

\setlength{\topmargin}{0truein}
\setlength{\headheight}{.25truein}
\setlength{\headsep}{.25truein}
\setlength{\textheight}{9truein}
\setlength{\footskip}{.25truein}
\setlength{\oddsidemargin}{0truein}
\setlength{\evensidemargin}{0truein}
\setlength{\textwidth}{6.5truein}
\setlength{\voffset}{-0.5truein}
\setlength{\hoffset}{0truein}

\newtheorem{theorem}[equation]{Theorem}

\newtheorem{lemma}[equation]{Lemma}
\newtheorem{proposition}[equation]{Proposition}
\newtheorem{corollary}[equation]{Corollary}

\newtheorem*{theorem:derhamisomorphism}{Theorem~\ref{T:derhamisomorphism}}
\newtheorem*{theorem:characterization}{Theorem~\ref{T:characterization}}

\theoremstyle{definition}
\newtheorem{definition}[equation]{Definition}
\newtheorem{example}[equation]{Example}

\newtheorem{remark}[equation]{Remark}

\numberwithin{equation}{section}

\newcommand{\TT}{\mathbb{T}}

\newcommand{\CC}{\mathbb{C}}

\newcommand{\zz}{\mathbf{z}}
\newcommand{\tz}{\widetilde{\mathbf{z}}}

\newcommand{\dbl}{\llbracket}
\newcommand{\dbr}{\rrbracket}

\newcommand{\bb}{\mathbf{b}}

\DeclareMathOperator{\Ker}{Ker}
\DeclareMathOperator{\GL}{GL}
\DeclareMathOperator{\Mat}{Mat}

\DeclareMathOperator{\Eis}{Eis}

\DeclareMathOperator{\Res}{Res}
\DeclareMathOperator{\Hol}{Hol}

\DeclareMathOperator{\Id}{Id}

\DeclareMathOperator{\Cof}{Cof}

\newcommand{\tr}{\mathrm{tr}}

\newcommand{\inorm}[1]{{\lvert #1 \rvert}}

\begin{document}

\title[Drinfeld modular forms]{A family of  Hecke eigenforms for Drinfeld modular forms of arbitrary rank}

\author{O\u{g}uz Gezm\.{i}\c{s}}
\address{O. Gezm\.{i}\c{s}\\Department of Mathematics, National Tsing Hua University, Hsinchu City 30042, Taiwan R.O.C.}
\email{gezmis@math.nthu.edu.tw}

\author{\"{O}zge \"{U}lkem}
\address{\"{O}. \"{U}lkem\\ Institute of Mathematics, Academia Sinica, Astronomy Mathematics Building, No. 1, Roosevelt Rd. Sec. 4, Taipei 10617, Taiwan R.O.C.}

\email{ozgeulkem@gmail.com}

\date{\today}

\keywords{Hecke operators, Drinfeld modular forms, Hecke eigenforms}

\subjclass[2010]{Primary 11F52; Secondary 11G09}

\begin{abstract}  We aim to provide an infinite family of cuspidal Drinfeld-Hecke eigenforms given in terms of a  determinant of twisted Eisenstein series. Our main tool is the theory of vectorial Drinfeld modular forms, previously introduced by Pellarin \cite{Pel12} and extensively studied by Pellarin \cite{Pel19} as well as in his joint work with Perkins \cite{PP18} in the rank two setting. We will develop this theory in the present paper for the arbitrary rank setting by using a particular representation.
\end{abstract}
\maketitle
\section{Introduction}
 Let $\mathbb{F}_q$ be the finite field with $q$ elements. We let $A$ be the polynomial ring, denoted by $\mathbb{F}_q[\theta]$ where $\theta$ is an indeterminate over $\mathbb{F}_q$. We further let $K$ be its fraction field. We consider the norm $|\cdot|$ at the infinite place normalized so that $|\theta|=q$. We let $K_{\infty}$ be the completion of $K$ with respect to $|\cdot|$ which may be given as the formal Laurent series $\mathbb{F}_q(\!(1/\theta)\!)$. We also set $\mathbb{C}_{\infty}$ to be the completion of a fixed algebraic closure of $K_{\infty}$. To describe our objects, letting $t$ be an indeterminate over $\mathbb{C}_{\infty}$, we also consider the Tate algebra $\TT$ given by
\[
\TT:=\left\{\sum_{n=0}^{\infty}a_nt^n\in \mathbb{C}_{\infty} \dbl t \dbr \ \ | \ \ |a_n|\to 0 \text{ as } n\to \infty\right\}.
\]
One can define \textit{the Gauss norm} $||\cdot||$ on $\mathbb{T}$ defined by 
\[
||g||:=\max\{|a_n|\ \ | \ \ n\geq 0\}, \ \ g=\sum_{n=0}^{\infty}a_nt^n\in \mathbb{T}.\]
Note that $(\mathbb{T},||\cdot||)$ forms a Banach space. Moreover, $\mathbb{T}$ may be seen as the algebra of holomorphic functions that converges on the closed unit disk in $\mathbb{C}_{\infty}$. 
For any $\ell \in \mathbb{Z}$, we define the automorphism $(\cdot)^{(\ell)}:\mathbb{T}\to \mathbb{T}$ by 
\begin{equation}\label{E:twisting}
g^{(\ell)}:=\sum_{n=0}^{\infty}a_n^{q^{\ell}}t^n\in \mathbb{T}, \ \ g=\sum_{n=0}^{\infty}a_nt^n\in \mathbb{T}.
\end{equation}
We further extend the map $(\cdot)^{(\ell)}$ to the elements of  $ \Mat_{m\times n}(\mathbb{T}) $ and for any $B=(b_{ij})_{i,j}\in \Mat_{m\times n}(\mathbb{T})$, we define $B^{(\ell)}:=(b_{ij}^{(\ell)})_{i,j}\in \Mat_{m\times n }(\mathbb{T})$.

 Let $r\geq 2$ and consider \textit{the Drinfeld's upper half plane} given by 
\[
\Omega^r:=\mathbb{P}^{r-1}(\CC_{\infty})\setminus \{K_{\infty}\text{-rational hyperplanes}\}.
\]
We refer the reader to \S2.2 for more details on its rigid analytic space structure. Throughout this paper, we identify elements of $\Omega^r$ with the tuple $(z_1,\dots,z_r)^{\tr}\in \mathbb{C}_{\infty}^r$ whose entries are $K_{\infty}$-linearly independent and normalized so that $z_r=1$.  We further note that when $r=2$, we may indeed identify $\Omega^2$ set theoretically with $\mathbb{C}_{\infty}\setminus K_{\infty}$. 

Drinfeld modular forms were defined by D. Goss in this PhD thesis (see also \cite{Gos80a}) analytically as $\mathbb{C}_{\infty}$-valued rigid analytic functions on $\Omega^2$ satisfying a certain functional equation under the action of $\GL_2(A)$ on $\Omega^2$ and a growth condition at cusps. Algebraically, these objects may be given as global sections of an ample invertible sheaf on the compactified Drinfeld moduli space. He further studied the Hecke operators acting on them and showed that the space of Drinfeld modular forms are stable under these operators and cuspidality is also preserved \cite[Thm. 3.14]{Gos80a}. Moreover, he provided examples for eigenforms such as Eisenstein series and the discriminant function of rank two \cite{Gos80b}. Later on, Gekeler studied these operators in \cite{Gek88} and provided more examples of eigenforms such as \textit{Gekeler's $h$-function} $h_2$ (see Example \ref{Ex:1} for more details on the discriminant function and the function $h_2$). In another direction, generalizing the work of Lopez \cite{Lop10}, Petrov \cite{Pet13} showed that if a Hecke eigenform has \textit{an $A$-expansion} then its eigenvalues can be explicitly described (see also Remark \ref{R:intr}).

The theory of Drinfeld modular forms are not only interesting in the rank two setting but also can be generalized to the arbitrary rank case. In \cite{BBP18}, based on the Satake compactification introduced by Pink in \cite{Pin13}, Basson, Breuer and Pink introduced Drinfeld modular forms of arbitrary rank analytically as $\mathbb{C}_{\infty}$-valued rigid analytic functions on $\Omega^r$ satisfying a certain functional equation under the action of $\GL_r(A)$ on $\Omega^r$ and a growth condition at cusps (see also \cite{Bas14}). Similar to the rank two case, algebraically, they can be given as global sections of an ample invertible sheaf on the compactified Drinfeld moduli space. Moreover, they introduced the Hecke operators acting on Drinfeld modular forms and showed that Eisenstein series are Hecke eigenforms \cite[Cor. 14.12]{BBP18}. Later on, Basson \cite{Bas23} analyzed these operators in a special case and determined their effect on the $u$-expansion of Drinfeld modular forms \cite[Thm. 3.6, Cor. 3.11]{Bas23}. Furthermore, he provided examples of Hecke eigenforms such as the discriminant function of arbitrary rank and coefficient forms \cite[Thm. 4.1, Thm. 4.2]{Bas23}. We also refer the reader to the recent work  \cite{Gek26} of Gekeler where he studies Hecke operators constructed by using $A$-lattices in $\mathbb{C}_{\infty}$ (see \S2.1 for the explicit definition of $A$-lattices).

Our goal in the present paper is to introduce more examples of Hecke eigenforms in the arbitrary rank setting which may be written as a determinant of twisted Eisenstein series. To state our main result, we first introduce some fundamental objects. For any $a\in A$, we set $a(t):=a_{|\theta=t}$. For each $\mathbf{z}=(z_1,\dots,z_r)^{\tr}\in \Omega^r$, $1\leq i \leq r$ and $k=q^j\in \mathbb{Z}_{\geq 1}$ with $j\geq 0$, we define  
\[
\mathcal{E}^{[i]}_{k}(\zz,t):=\sum_{\substack{{a_1,\dots,a_r\in A}\\{a_1,\dots,a_r \text{ not all zero}}}}\frac{a_i(t)}{(a_1z_1+\dots+a_rz_r)^k}\in \mathbb{T}.
\] 
These functions, in the rank two setting, were first introduced by Pellarin in \cite{Pel12} and then extensively studied by Pellarin \cite{Pel19} and Pellarin and Perkins \cite{PP18}. 

Consider the $\mathbb{T}$-valued function $\mathcal{H}_r(\cdot,t):\Omega^r\to \mathbb{T}$ given by 
\[
\mathcal{H}_r(\zz,t):=\det \begin{pmatrix}
    \mathcal{E}^{[1]}_{1}(\zz,t) & \cdots &  \mathcal{E}^{[r-1]}_{1}(\zz,t)\\
\vdots & & \vdots \\
\mathcal{E}^{[1]}_{q^{r-2}}(\zz,t) & \cdots &  \mathcal{E}^{[r-1]}_{q^{r-2}}(\zz,t)
\end{pmatrix},\ \ \zz\in \Omega^r.
\]
Letting $(-\theta)^{1/(q-1)}$ be a fixed $(q-1)$-st root of $-\theta$, we also consider \textit{the Carlitz period} $\tilde{\pi}\in \mathbb{C}_{\infty}^{\times}$ which is defined by the infinite product
\begin{equation}\label{E:pi}      \tilde{\pi}:=\theta(-\theta)^{1/(q-1)}\prod_{i=1}^\infty\left(1-\theta^{1-q^i}\right)^{-1}\in\mathbb{C}_\infty^\times.
\end{equation}
Our main result (it will be restated as Theorem \ref{T:funcH} and Corollary \ref{C:main2} later) can be described as follows.
\begin{theorem}\label{T:main} The following statements hold.
\begin{itemize}
\item[(i)]
For any fixed $\zz\in \Omega^r$, $\mathcal{H}_r(\zz,\cdot )$ may be extended to an entire function of the variable $t$. Moreover, for any $n\geq r-1$,  letting $D_0:=1$ and $D_{\ell}:=(\theta^{q^{\ell}}-\theta)D_{\ell-1}\in A$ for each $\ell\geq 1$, we have
\[
\mathcal{H}_r(\zz,\theta^{q^n})=\tilde{\pi}^{\frac{q^r-1}{q-1}-q^{n}}D_{n-(r-1)}^{q^{r-1}}h_r(\zz)\alpha_{n-(r-1)}(\zz)^{q^{r-1}}
\]
where $h_r$ is \textit{Gekeler's $h$-function} and $\alpha_{n-(r-1)}(\zz)$ is the $n-(r-1)$-th coefficient of the exponential series of the Drinfeld module $\phi^{\zz}$ corresponding to the $A$-lattice $A^r\zz$. In particular, we have
\[
\mathcal{H}_r(\zz,\theta^{q^{r-1}})=\tilde{\pi}^{\frac{q^{r-1}-1}{q-1}}h_r(\zz).
\]
Furthermore, $ \mathcal{H}_r(\zz,\theta^{q^n}) $ is a Drinfeld cusp form of weight $ \frac{q^{r-1}-1}{q-1}+q^n $ and type $1$ for $\GL_r(A)$.
\item[(ii)] Let $\mathfrak{p}$ be a monic irreducible polynomial in $A$ and $T_{\mathfrak{p},r}$ be the Hecke operator acting on the space of Drinfeld modular forms of rank $r$. Then for each $n\geq r-1$,  $\mathcal{H}_r(\zz,\theta^{q^n})$ is a Hecke eigenform and
\[
T_{\mathfrak{p},r}(\mathcal{H}_r(\zz,\theta^{q^n}))=\mathfrak{p}^{1+q+\dots+q^{r-2}}\mathcal{H}_r(\zz,\theta^{q^n}).
\]
\end{itemize}
    
\end{theorem}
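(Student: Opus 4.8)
The plan is to reduce everything to the action of $T_{\mathfrak{p},r}$ on the twisted Eisenstein series $\mathcal{E}^{[i]}_k$ and then exploit the multilinearity of the determinant defining $\mathcal{H}_r$. Since the Hecke operator acts only on the variable $\zz$ and commutes with the specialization $t\mapsto\theta^{q^n}$, and since by part (i) each $\mathcal{H}_r(\zz,\theta^{q^n})$ is a genuine scalar Drinfeld cusp form of weight $w_n:=\frac{q^{r-1}-1}{q-1}+q^n$ and type $1$, I may compute $T_{\mathfrak{p},r}\mathcal{H}_r(\zz,\theta^{q^n})$ through its defining coset sum with the slash operator of weight $w_n$. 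First I would fix Hermite normal form representatives $\delta$ for the relevant double coset of $\GL_r(A)$, i.e. for the index-$\mathfrak{p}$ sublattices of $A^r$, and record that there are $1+\lvert\mathfrak{p}\rvert+\cdots+\lvert\mathfrak{p}\rvert^{r-1}$ of them.

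Next I would compute the transformation of the entries. Writing $\langle\mathbf{a},\zz\rangle=\sum_j a_jz_j$ and substituting $\mathbf{b}=\mathbf{a}\delta$ in the defining series yields the key identity
\[
\mathcal{E}^{[i]}_k(\delta\zz,t)=\sum_{m=1}^{r}\bigl((\delta^{-1})_{mi}\bigr)(t)\,\mathcal{E}^{[m],\delta}_k(\zz,t),\qquad \mathcal{E}^{[m],\delta}_k(\zz,t):=\sum_{0\neq\mathbf{b}\in A^r\delta}\frac{b_m(t)}{\langle\mathbf{b},\zz\rangle^k},
\]
so that under $\delta$ the column index $i$ is acted on by the matrix $\delta^{-1}(t)$. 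The decisive point is that $f(\theta^{q^n})=f^{q^n}$ for every $f\in K$, whence $\delta^{-1}(\theta^{q^n})$ is the entrywise $q^n$-th power $(\delta^{-1})^{[q^n]}$, which, since $\det\delta=\mathfrak{p}$, carries the factor $\mathfrak{p}^{-q^n}$. I would then expand the $(r-1)\times(r-1)$ determinant $\mathcal{H}_r(\delta\zz,\theta^{q^n})$ by Cauchy--Binet, re-index each inner lattice sum $\mathcal{E}^{[m],\delta}_k$ over all of $A^r$ with the multiplicities coming from how many index-$\mathfrak{p}$ sublattices contain a given vector (all of them on $\mathfrak{p}A^r$, and a $\mathbb{P}^{r-2}(A/\mathfrak{p})$'s worth otherwise), and sum over $\delta$.

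The heart of the argument is that, after summing over $\delta$, the terms not proportional to the original entries cancel. The rank-two model already exhibits the mechanism: the genuinely new series produced by the diagonal representative is annihilated by a matching piece of the unipotent representatives, but only once the slash is taken in the correct weight $w_n$ rather than in the naive homogeneity weight $k$, since it is the extra power of $\mathfrak{p}$ supplied by $\mathfrak{p}^{-w_n}$ that forces the cancellation. Carrying this out for each row of weight $q^{\ell-1}$, the surviving contribution is proportional to the corresponding row, the scalar being a power of $\mathfrak{p}$ determined by $q^{\ell-1}$; assembling these through the Leibniz expansion multiplies the scalars, while the $n$-dependent Frobenius twists introduced by $(\delta^{-1})^{[q^n]}$ factor out of all $r-1$ rows identically and cancel against $\mathfrak{p}^{-w_n}$, leaving the $n$-independent eigenvalue $\prod_{\ell=1}^{r-1}\mathfrak{p}^{q^{\ell-1}}=\mathfrak{p}^{1+q+\cdots+q^{r-2}}$. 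As an independent check I would specialize to $n=r-1$, where part (i) gives $\mathcal{H}_r(\zz,\theta^{q^{r-1}})=\tilde{\pi}^{(q^{r-1}-1)/(q-1)}h_r(\zz)$, and match the eigenvalue against the known Hecke eigenvalue of the Gekeler $h$-function from \cite{Bas23}.

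The main obstacle is precisely this cancellation of the off-diagonal terms together with the bookkeeping that isolates the per-row factor $\mathfrak{p}^{q^{\ell-1}}$: one must track simultaneously the representation matrix $\delta^{-1}(t)$ acting on the column index, the sublattice multiplicities acting on the summation index, and the weight factor $\mathfrak{p}^{-w_n}$, and verify that their combination collapses the $\mathbb{P}^{r-1}$-fold sum to a single scalar. The Cauchy--Binet step is where the choice of exactly the first $r-1$ columns (rather than all $r$) and the Frobenius-power weights $1,q,\dots,q^{r-2}$ of the rows conspire to kill the unwanted minors; confirming that no residual $\mathcal{E}^{[r]}$-contributions or shifted-lattice series survive is the technical crux of the proof.
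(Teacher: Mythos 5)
Your proposal does not address part (i) at all: you invoke it as given (``by part (i) each $\mathcal{H}_r(\zz,\theta^{q^n})$ is a genuine scalar Drinfeld cusp form\dots''), but the entire explicit evaluation $\mathcal{H}_r(\zz,\theta^{q^n})=\tilde{\pi}^{\frac{q^r-1}{q-1}-q^{n}}D_{n-(r-1)}^{q^{r-1}}h_r(\zz)\alpha_{n-(r-1)}(\zz)^{q^{r-1}}$, the entireness in $t$, and the cuspidality are nontrivial. The paper obtains them by comparing the $(r,r)$-entries of the identity $\mathcal{E}(\zz,t)\mathcal{F}(\zz,t)=-(c_{q^i-1}^{(j)})$ from \cite{CG22}, which exhibits $\mathcal{H}_r(\zz,t)$ as $\tilde{\pi}^{\frac{q^r-1}{q-1}}h_r(\zz)\,s_r^{(r-1)}(\zz,t)/\omega(t)^{(r-1)}$ and reduces everything to a residue computation for the Anderson generating function and the Anderson--Thakur function. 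Nothing in your outline supplies a substitute for this identity, and without it you have no route to the stated formula or to the weight/type/cuspidality claims.

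For part (ii) you take a genuinely different route from the paper (direct coset-sum computation on the twisted Eisenstein series plus Cauchy--Binet, versus the paper's structure theorem that the $\mathbb{T}$-module of vectorial forms of weight $\frac{q^{r-1}-1}{q-1}$ for $\rho$ is free of rank one on $\mathcal{G}_r$, which makes the eigenform property automatic and leaves only the eigenvalue to be read off the linear $u$-coefficient by induction on $r$). But your version has a genuine gap: the entire argument rests on the assertion that ``after summing over $\delta$, the terms not proportional to the original entries cancel,'' which you yourself label the technical crux and never establish. This is not a routine verification. The paper's Remark 1.2 points out that for $2\leq i\leq r$ the entries of the $i$-th column of the matrix defining $\mathcal{H}_r$ do not even admit a $u$-expansion, so the intermediate objects $\mathcal{E}^{[m],\delta}_k$ in your Cauchy--Binet expansion are not modular forms with controlled behaviour at infinity, and there is no a priori reason the sublattice-multiplicity bookkeeping collapses the sum as claimed; the authors deliberately route around exactly this computation via the one-dimensionality argument. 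As written, your proposal is a plausible strategy sketch rather than a proof of either part.
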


Our strategy to prove Theorem \ref{T:main} can be described as follows. First, in \S3.2, we analyze a fundamental identity \eqref{E:mainid} shown by Chen and the first author in \cite{CG22}. More precisely, we focus on a certain entry of this identity that allows us to relate $\mathcal{H}_r$ to the $(r-1)$-st twist of the ratio of an Anderson generating function with the Anderson-Thakur function (Theorem \ref{T:funcH}(i)). Investigating the removable singularity of this ratio at $t=\theta^{q^n}$ for $n\geq r-1$ then implies the first part of the theorem. To prove the second part, using a particular representation, we introduce vectorial Drinfeld modular forms (abbreviated as VDMFs later) in the sense of Pellarin \cite{Pel19} and Pellarin and Perkins \cite{PP18}. In \S3.3, we provide explicit examples $\mathcal{G}_1,\dots,\mathcal{G}_r$ for these objects so that the last entry of $\mathcal{G}_r$ is equal to $\mathcal{H}_r$. Moreover, in Theorem \ref{T:str}, we show that these functions indeed generate the $\mathbb{T}$-module of VDMFs over the $\mathbb{C}_{\infty}$-vector spaces of Drinfeld modular forms of certain weight and type. This will imply that the space of VDMFs of weight $\frac{q^{r-1}-1}{q-1}$ with respect to our chosen representation is generated by $\mathcal{G}_r$ over $\mathbb{T}$. Later on, in \S4, we define a Hecke operator acting on VDMFs, preserving the weight of VDMFs, such that its action to the last entry of a VDMF induces a Hecke operator acting on the space of Drinfeld modular forms upon the substitution $t=\theta^{q^n}$ (see \S3.5). Finally, using the effect of our Hecke operator on $u$-expansions, especially its effect on the linear term, in Theorem \ref{T:Hecke}, we show that $\mathcal{G}_r$ is a Hecke eigenform and hence its last entry, $\mathcal{H}_r$, may also be described as a Hecke eigenform upon the substitution $t=\theta^{q^n}$.

\begin{remark}\label{R:intr} Let $u_2$ be a uniformizer at infinity for Drinfeld modular forms of rank two (for the congruence subgroup $\GL_2(A)$) and let $A_{+}$ be the set of monic polynomials in $A$. In the rank two case, Petrov \cite{Pet13} studied a particular class of Drinfeld modular forms having \textit{an $A$-expansion of exponent $n$} which can be uniquely written as 
\[
f(z)=c_0(f)+\sum_{a\in A_{+}}c_a(f)G_n(u_2(az))
\]
where $z\in \Omega^2=\mathbb{C}_{\infty}\setminus K_{\infty}$ is sufficiently large, $c_0(f),c_a(f)\in \mathbb{C}_{\infty}$ and $G_n$ is the $n$-th Goss polynomial (see \cite[\S3]{Gek88} for a thorough discussion on Goss polynomials). He further showed in \cite[Thm. 2.3]{Pet13} that the eigenvalues of Hecke eigenforms having an $A$-expansion can be explicitly described. Later on, Pellarin and Perkins \cite[\S5.2.1]{PP18} recovered a sub-class of Petrov's family of cuspidal Hecke eigenforms with $A$-expansion via specializing a certain entry of VDMFs. We also refer the reader to \cite[\S7.6]{Pel19} for the connection between vectorial Drinfeld modular forms and Petrov's $A$-expansions in the rank two setting. 

In the arbitrary rank setting, the only known examples regarding this direction are the Eisenstein series which are Hecke eigenforms and have a $u$-expansion written in terms of Goss polynomials (see \cite[\S14]{BBP18} and \cite[\S3]{BB17} for details). However, they fail to be a Drinfeld cusp form (see \cite[(3.6)]{BB17}). The main motivation for authors to initiate the present work is to provide cuspidal Hecke eigenforms of arbitrary rank which have a certain form that could be used to generalize the notion of $A$-expansion. In what follows, we make some observation regarding such a generalization. First, we remark that for $j\geq 0$ and $1\leq i \leq r$,  $\mathcal{E}_{q^{j}}^{[i]}(\zz,\cdot)$ may be extended to an entire function of the variable $t$. 
Indeed, using \eqref{E:det1}, we write 
\[
\mathcal{F}(\zz,t)^{-1}=\frac{\tilde{\pi}^{\frac{q^r-1}{q-1}}h_r(\zz)}{\omega(t)}\Cof(\zz,t)^{\tr}
\]
where $\Cof(\zz,t)\in \GL_r(\mathbb{T})$ is the cofactor matrix of $\mathcal{F}(\zz,t)$. By \cite[Def. 3.16, (4.120)]{NP21}, there exists $V\in \GL_r(\mathbb{C}_{\infty})$ such that the entries of $V^{-1}(\mathcal{F}(\zz,t)^{-1})^{(1)}$ are entire functions of $t$. This implies that the entries of $(\omega(t)^{-1}\Cof(\zz,t))^{(1)}$ are entire functions of $t$. Since twisting has no affect on entireness of a function, we then see that the entries of $\omega(t)^{-1}\Cof(\zz,t)$ are also entire functions of $t$. 

Letting $\mathcal{C}_{i1}(\zz,t)$ be the $(i,1)$-th entry of $\Cof(\zz,t)$ and using \eqref{E:det1} and \eqref{E:mainid}, we have 
\[
\mathcal{E}_{1}^{[i]}(\zz,t)=\frac{\tilde{\pi}^{\frac{q^r-1}{q-1}} h_r(\zz)}{(t-\theta)}\omega(t)^{-1}\mathcal{C}_{i1}(\zz,t).
\]
Using Proposition \ref{P:and}(i), one can easily see that $ \mathcal{C}_{i1}(\zz,t) $ has no pole at $t=\theta$. On the other hand, by the definition of $\omega(t)$ (see \eqref{D:omega}), $\omega(t)^{-1}$ has simple zeros at $t=\theta^{q^{\ell}}$ for $\ell\geq 0$ and no other zeros. Thus, the entire function $\omega(t)^{-1}\mathcal{C}_{i1}(\zz,t) $ has a simple zero at $t=\theta$. Hence, $\mathcal{E}_{1}^{[i]}(\zz,t)$ has a removable singularity at $t=\theta$. Therefore,  $\mathcal{E}_{1}^{[i]}(\zz,\cdot)$ may be extended to an entire function of $t$. Using the identity $\mathcal{E}_{q^{j}}^{[i]}(\zz,t)=(\mathcal{E}_{1}^{[i]}(\zz,t))^{(j)}$, we see that $\mathcal{E}_{q^{j}}^{[i]}(\zz,\cdot)$ may be also extended to an entire function of $t$. 

Now, for sufficiently large $\zz\in \Omega^{r}$, by \cite[Cor. 5.2]{CG21}, we have 
\[
\mathcal{E}_{q^{j}}^{[1]}(\zz,t)=(\mathcal{E}_{1}^{[1]}(\zz,t))^{(j)}=-\tilde{\pi}^{q^j}\sum_{a\in A_{+}}a(t)u_a(\zz)^{q^{j}}=-\tilde{\pi}^{q^j}\sum_{a\in A_{+}}a(t)G_1(u_a(\zz))^{q^{j}}.
\]
Here $u_a$ is defined as in \eqref{E:defua} and $G_1(X)=X$ is the Goss polynomial of weight $1$. For $n\geq r-1$ and $0\leq j \leq r-2$, we then have 
\begin{equation}\label{E:eisev}
\mathcal{E}_{q^{j}}^{[1]}(\zz,\theta^{q^n})=-\tilde{\pi}^{q^j}\sum_{a\in A_{+}}a^{q^{n}}u_a(\zz)^{q^{j}}=\left(-\tilde{\pi}\sum_{a\in A_{+}}a^{q^{n-j}}G_1(u_a(\zz))\right)^{q^{j}}.
\end{equation}
 This implies that the entries of the first column of the matrix used to define $\mathcal{H}_r$ are particular $q$-th powers of a certain \textit{$A$-expansion}. Regarding our findings in this paper, we expect that a higher rank analogue of Petrov's result on Drinfeld eigenforms with $A$-expansion may be given in terms of a determinant form as in the definition of $\mathcal{H}_r$. However, there are still details to be understood to strengthen such a generalization. For example, for $2\leq i \leq r$, entries of $i$-th column of the matrix used to define $\mathcal{H}_r$ do not seem to be formed by using $A$-expansions and even worse, by \eqref{E:mainid}, \cite[Cor. 5.9]{CG21} and \cite[Lem. 4.14(i)]{CG22}, they do not have a $u$-expansion. On the other hand, there seems to be a cancellation so that once one takes the determinant, by Theorem \ref{T:main}(i), it provides a Drinfeld modular form which certainly has a $u$-expansion. Moreover, in \cite{Pel19}, Pellarin also developed the notion of expansion at the cusp infinity for vectorial Drinfeld modular forms (see \cite[Chap. 3]{Pel19}) which is also a missing point in the arbitrary rank setting and seems to be crucial to investigate further the $A$-expansions for the higher rank case. We hope to come back to these problems in near future.
\end{remark}
\subsection*{Acknowledgments} The authors would like to thank the anonymous referee for careful reading of the manuscript and valuable suggestions that improve the presentation of the results of the paper. The authors are also grateful to Gebhard B\"{o}ckle, Yen-Tsung Chen, Federico Pellarin and Sjoerd de Vries for their valuable comments. The first author acknowledges support from NSTC Grant 113-2115-M-007-001-MY3. The second author is supported by Academia Sinica Investigator Grant AS-IA-112-M01 and NSTC grant 113-2115-M-001-001. 

\section{Preliminaries}
In this section, our goal is to introduce necessary background on Drinfeld modules and Drinfeld modular forms as well as Hecke operators to obtain our results. Our exposition is mainly based on \cite{Bas23,BBP18,Goss,Pap23, Pel08}.
\subsection{Drinfeld modules and Anderson generating functions} We let $\mathbb{C}_{\infty}\dbl \tau\dbr$ be the ring of power series in $\tau$ with coefficients in $\mathbb{C}_{\infty}$ subject to the condition $\tau \alpha=\alpha^q\tau$
 for all $\alpha\in \mathbb{C}_{\infty}$. Moreover, we denote by $\mathbb{C}_{\infty}[\tau]\subset \mathbb{C}_{\infty}\dbl\tau\dbr$ the ring of polynomials in $\tau$.

\begin{definition} \textit{A Drinfeld module of rank $r\geq 1$} is an $\mathbb{F}_q$-algebra homomorphism $\phi:A\to \mathbb{C}_{\infty}[\tau]$ uniquely described by 
\[
\phi_{\theta}:=\phi(\theta)=\theta+\phi_{\theta,1}\tau+\cdots+\phi_{\theta,r}\tau^r, \ \ \phi_{\theta,r}\neq 0.
\]
For each $1\leq i \leq r$, we call $\phi_{\theta,i}$ \textit{the $i$-th coefficient of $\phi$}.
\end{definition}

For each Drinfeld module $\phi$, there exists a unique infinite series $\exp_{\phi}=\sum_{n\geq 0}\alpha_n\tau^n\in \mathbb{C}_{\infty}\dbl\tau\dbr$ so that $\alpha_0:=1$ and it satisfies the functional equation 
\begin{equation}\label{E:funceqtn}
\exp_{\phi}\theta=\phi_{\theta}\exp_{\phi}.
\end{equation}
The infinite series $\exp_{\phi}$ induces an $\mathbb{F}_q$-linear everywhere convergent function $\exp_{\phi}:\mathbb{C}_{\infty}\to \mathbb{C}_{\infty}$ given by 
\[
\exp_{\phi}(z)=\sum_{n=0}^{\infty}\alpha_nz^{q^n}, \ \ z\in \mathbb{C}_{\infty}.
\]
We further call each non-zero element of $\Ker(\exp_{\phi})$ \textit{a period of $\phi$}.

 One can also construct the formal inverse of $\exp_{\phi}$ in $\mathbb{C}_{\infty}\dbl\tau\dbr$ which we denote by $\log_{\phi}=\sum_{n\geq 0}\beta_n\tau^n\in \mathbb{C}_{\infty}\dbl\tau\dbr$. Similar to $\exp_{\phi}$, the infinite series  $\log_{\phi}$ induces an $\mathbb{F}_q$-linear function $\log_{\phi}:\mathcal{D}_{\phi}\to \mathbb{C}_{\infty}$ given by 
\[
\log_{\phi}(z)=\sum_{n=0}^{\infty}\beta_nz^{q^n}
\]
for all $z$ in a subdomain $\mathcal{D}_{\phi}$ of $\mathbb{C}_{\infty}$.

Let $\Lambda\subset \mathbb{C}_{\infty}$ be a free $A$-module ($\mathbb{F}_q$-vector space respectively). We call $\Lambda$ \textit{an $A$-lattice} (\textit{an $\mathbb{F}_q$-lattice respectively}) if its intersection with any ball of finite radius is finite. Moreover, for any $\alpha\in \mathbb{C}_{\infty}^{\times}$ and $A$-lattice ($\mathbb{F}_q$-lattice respectively), we denote by $c\Lambda$ the $A$-lattice ($\mathbb{F}_q$-lattice respectively) consisting of elements $\alpha \lambda$ for $\lambda \in \Lambda$.

We further define \textit{the exponential function $\exp_{\Lambda}:\mathbb{C}_{\infty}\to \mathbb{C}_{\infty}$ of $\Lambda$}  by 
\[
\exp_{\Lambda}(z):=z\prod_{\lambda\in \Lambda\setminus \{0\}}\left(1-\frac{z}{\lambda}\right), \ \ z\in \mathbb{C}_{\infty}.
\]
Observe that for any $\alpha\in \mathbb{C}_{\infty}^{\times}$, we have 
\begin{equation}\label{E:constantexp}
    \exp_{\alpha\Lambda}(\alpha z)=\alpha\exp_{\Lambda}(z).
\end{equation}

Due to Drinfeld \cite{Dri74}, there exists a one-to-one correspondence between Drinfeld modules of rank $r$ and $A$-lattices of rank $r$. More precisely, each Drinfeld module $\phi$ corresponds to an $A$-lattice $\Lambda_{\phi}$ of rank $r$ so that $\Lambda_{\phi}=\Ker(\exp_{\phi})$. Conversely, each $A$-lattice $\Lambda$ of rank $r$ corresponds to a Drinfeld module $\phi$ of rank $r$ so that $\exp_{\phi}=\exp_{\Lambda}$.

\begin{example}\label{Ex:C} One of the most fundamental examples of Drinfeld modules is given by \textit{the Carlitz module} $C$ defined as 
\[
C_{\theta}:=\theta+\tau.
\]    
Recall that $D_0=1$ and for $\ell\geq 1$, $D_{\ell}=(\theta^{q^{\ell}}-\theta)D_{\ell-1}\in A$. The exponential function $\exp_{C}$ of $C$ is explicitly given by 
\[
\exp_{C}(z)=\sum_{n=0}^{\infty}\frac{z^{q^n}}{D_n},\ \ z\in \mathbb{C}_{\infty}.
\]
The $A$-lattice of rank one corresponding to the Carlitz module is generated over $A$ by the Carlitz period $\tilde{\pi}$ defined as in \eqref{E:pi}. 
\end{example}

Let $z\in \mathbb{C}_{\infty}$. We define \textit{the Anderson generating function} $s_{\phi}(z,t)$ by the infinite series 
\[
s_{\phi}(z,t):=\sum_{n=0}^{\infty}\exp_{\phi}\left(\frac{z}{\theta^{n+1}}\right)t^n\in \mathbb{T}.
\]
We continue with stating fundamental properties of Anderson generating functions which are due to Pellarin (see also \cite[Prop. 3.2]{EGP14}).
\begin{proposition}\cite[\S4.2]{Pel08}, \label{P:and} Let $\exp_{\phi}=\sum_{n\geq 0}\alpha_n\tau^n\in \mathbb{C}_{\infty}\dbl\tau\dbr$ be the exponential series of $\phi$. The following statements hold.
\begin{itemize}
    \item[(i)] We have an identity in $\mathbb{T}$:
    \[
    s_{\phi}(z,t)=\sum_{n=0}^{\infty}\frac{\alpha_n z^{q^n}}{\theta^{q^n}-t}.
    \]
    \item[(ii)] As a function of $t$, for each $n\geq 0$, $s_{\phi}(z,t)$ has a simple pole at $t=\theta^{q^n}$ with residues  $\Res_{t=\theta^{q^n}}s_{\phi}(z,t)=-\alpha_nz^{q^n}$.
    \item[(iii)] For any $z\in \Ker(\exp_{\phi})$, we have 
    \[
    (t-\theta)s_{\phi}(z,t)=\phi_{\theta,1}s_{\phi}^{(1)}(z,t)+\cdots+\phi_{\theta,r}s_{\phi}^{(r)}(z,t).
    \]
\end{itemize}
    
\end{proposition}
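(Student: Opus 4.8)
The plan is to derive all three parts directly from the power-series definition of $s_{\phi}(z,t)$ together with the functional equation $\exp_{\phi}\theta=\phi_{\theta}\exp_{\phi}$.

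For part (i), I would substitute $\exp_{\phi}(w)=\sum_{n\geq 0}\alpha_n w^{q^n}$ into the definition of $s_{\phi}(z,t)$, producing the double series with general term $\alpha_n z^{q^n}\theta^{-(i+1)q^n}t^i$. Its absolute value is $|\alpha_n|\,|z|^{q^n}q^{-(i+1)q^n}$, which tends to $0$ as $\max(i,n)\to\infty$: indeed $\exp_{\phi}$ is entire, so $|\alpha_n|^{1/q^n}\to 0$, and $|t|\leq 1<q^{q^n}=|\theta^{q^n}|$ for all $n$. In the non-Archimedean Tate algebra a family tending to $0$ is unconditionally summable, so the double series may be reordered; summing first over $i$ gives the geometric series $\sum_{i\geq 0}(t/\theta^{q^n})^{i}=\theta^{q^n}/(\theta^{q^n}-t)$, and collecting the prefactor $\alpha_n z^{q^n}/\theta^{q^n}$ yields the asserted identity.

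For part (ii), the same estimate shows the right-hand side of (i) converges for every $t\in\CC_{\infty}$ outside the set $\{\theta^{q^n}:n\geq 0\}$, hence furnishes a meromorphic continuation of $s_{\phi}(z,\cdot)$ to all of $\CC_{\infty}$. Since the $\theta^{q^n}$ are pairwise distinct, near $t=\theta^{q^n}$ only the single summand $\alpha_n z^{q^n}/(\theta^{q^n}-t)$ is singular while the remaining series is holomorphic there; reading off its principal part $-\alpha_n z^{q^n}/(t-\theta^{q^n})$ gives a simple pole with residue $-\alpha_n z^{q^n}$.

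Part (iii) is the substantive one. First I would compare coefficients of $\tau^m$ in $\exp_{\phi}\theta=\phi_{\theta}\exp_{\phi}$ to record the recursion $(\theta^{q^m}-\theta)\alpha_m=\sum_{j=1}^{r}\phi_{\theta,j}\alpha_{m-j}^{q^j}$ for $m\geq 1$, with the convention $\alpha_{m-j}=0$ when $m<j$. Twisting the identity of (i) gives $s_{\phi}^{(\ell)}(z,t)=\sum_{n\geq 0}\alpha_n^{q^{\ell}}z^{q^{n+\ell}}/(\theta^{q^{n+\ell}}-t)$. Substituting this into $\sum_{j=1}^{r}\phi_{\theta,j}s_{\phi}^{(j)}(z,t)$, reindexing $m=n+j$, and applying the recursion collapses the inner $j$-sum to $(\theta^{q^m}-\theta)\alpha_m$, leaving $\sum_{m\geq 1}(\theta^{q^m}-\theta)\alpha_m z^{q^m}/(\theta^{q^m}-t)$. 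Using $(\theta^{q^m}-\theta)/(\theta^{q^m}-t)=1+(t-\theta)/(\theta^{q^m}-t)$ splits this into $\sum_{m\geq 1}\alpha_m z^{q^m}$ plus $(t-\theta)\sum_{m\geq 1}\alpha_m z^{q^m}/(\theta^{q^m}-t)$; the former equals $\exp_{\phi}(z)-z=-z$ because $z$ is a period, and the latter equals $(t-\theta)\bigl(s_{\phi}(z,t)-z/(\theta-t)\bigr)$, whose boundary term contributes $+z$ and cancels the $-z$. What survives is exactly $(t-\theta)s_{\phi}(z,t)$. The only genuinely delicate points are the unconditional-summability justification behind the rearrangements in (i) and the meromorphic continuation in (ii); once the $\alpha_m$-recursion and the period condition $\exp_{\phi}(z)=0$ are in place, part (iii) is a formal manipulation, so I expect the careful bookkeeping of the $m=0$ boundary terms there to be the main place where sign or indexing slips could occur.
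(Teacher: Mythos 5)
Your argument is correct, and it is essentially the standard proof of this proposition: the paper itself gives no proof, citing Pellarin \cite[\S4.2]{Pel08}, and the computations there (and in \cite{EGP14}) proceed exactly as you do — expand $\exp_{\phi}$ inside the defining series, justify the interchange by unconditional summability of a null family, read off the principal parts for (ii), and for (iii) combine the twisted form of (i) with the coefficient recursion $(\theta^{q^m}-\theta)\alpha_m=\sum_{j}\phi_{\theta,j}\alpha_{m-j}^{q^j}$ and the partial-fraction split $(\theta^{q^m}-\theta)/(\theta^{q^m}-t)=1+(t-\theta)/(\theta^{q^m}-t)$, with the $m=0$ boundary terms cancelling against $\exp_{\phi}(z)=0$. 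Your bookkeeping of those boundary terms is accurate, so there is nothing to correct.
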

As a fundamental example of Anderson generating functions, we define \textit{the Anderson-Thakur function}, which is the Anderson generating function $s_{C}(\tilde{\pi},t)$, by the infinite product
\begin{equation}\label{D:omega}
\omega(t):=s_{C}(\tilde{\pi},t)=(-\theta)^{1/(q-1)}\prod_{i=0}^{\infty}\left(1-\frac{t}{\theta^{q^i}}\right)^{-1}\in \mathbb{T}^{\times}.
\end{equation}
As Proposition \ref{P:and}(iii) implies, we have  
\begin{equation}\label{E:omegafunceq}
(t-\theta)\omega(t)=\omega(t)^{(1)}.
\end{equation}
By Proposition \ref{P:and}(ii), we further observe that  
\[
\tilde{\pi}=-\Res_{t=\theta}\omega(t).
\]
\begin{remark} We refer the reader to \cite{KP23} for a study of product expansion of Anderson generating functions as in \eqref{D:omega} corresponding to Drinfeld modules of arbitrary rank.
\end{remark}
\subsection{Drinfeld modular forms} Recall the Drinfeld upper half plane $\Omega^r$ from \S1.  Let $H$ be a $K_\infty$-rational hyperplane in $\mathbb{P}^{r-1}(\mathbb{C}_\infty)$ which is given by a solution set of a linear form
\[
\ell_H(X_1,\dots,X_r):=b_1X_1+\cdots+b_rX_r\in K_\infty[X_1,\dots,X_r]
\]
so that $\max_{1\leq i\leq r}\{\inorm{b_i}\}=1$. For any $\mathbf{x}=(\mathbf{x}_1,\dots,\mathbf{x}_r)^{\tr}\in \mathbb{C}_{\infty}^r$, we set $|\mathbf{x}|_{\infty}:=\max_{i=1}^r\inorm{\mathbf{x}_i}$. Moreover, if we let $\ell_H(\mathbf{x}):=b_1\mathbf{x}_1+\dots+b_r\mathbf{x}_r$, then $\inorm{\ell_{H}(\zz)}$ is well-defined for any $\zz\in\Omega^r$ and is independent of the choice of $\ell_H$. For each $n\geq 1$, we define
\[
\Omega_n^r:=  \{\zz\in\Omega^r\mid\inorm{\ell_H(\zz)}\geq q^{-n}|\zz|_{\infty} \ \ \text{for any $K_{\infty}$-rational hyperplane $H$}\}.
\]
   Note that $\{\Omega_{n}^r\}_{n=1}^{\infty}$ is an admissible covering of $\Omega^r$ (see \cite[\S3]{BBP18} for more details). We call $f:\Omega^r\to \mathbb{C}_{\infty}$ \textit{a rigid analytic function} if its restriction to each $\Omega_n^r$ is the uniform limit of rational functions having no pole in $\Omega_n^r$.

For any $\gamma=(a_{ij})\in \GL_r(K_{\infty})$ and $\zz=(z_1,\dots,z_r)^{\tr}\in \Omega^r$, we define 
\[
j(\gamma;\zz):=a_{r1}z_1+\dots+a_{rr}z_r.
\]
Furthermore, we define the action of $\GL_r(K_{\infty})$ on $\Omega^r$ by 
\[
\gamma\cdot \zz:=\left(
    \frac{a_{11}z_1+\dots+a_{1r}z_r}{j(\gamma;\zz)},
    \dots,\frac{a_{(r-1)1}z_1+\dots+a_{(r-1)r}z_r}{j(\gamma;\zz)},1\right)^{\tr}
\in \Omega^r.
\]
Finally, for any $\gamma\in \GL_r(K_{\infty})$ and a rigid analytic function $f:\Omega^r\to \mathbb{C}_{\infty}$, we define the $k$-th slash operator $|_{k}$ and the $k$-th slash operator $|_{k,m}$ with respect to type $m\in \mathbb{Z}/(q-1)\mathbb{Z}$ by 
\[
(f|_{k}\gamma)(\zz):=j(\gamma;\zz)^{-k}f(\gamma\cdot \zz).
\]
and
\[
(f|_{k,m}\gamma)(\zz):=j(\gamma;\zz)^{-k}\det(\gamma)^{m}f(\gamma\cdot \zz)
\]
respectively.
\begin{definition} Let $k\in \mathbb{Z}$ and $m\in \mathbb{Z}/(q-1)\mathbb{Z}$. We call a rigid analytic function $f:\Omega^r\to \mathbb{C}_{\infty}$ \textit{a weak Drinfeld modular form of weight $k$ and type $m$ (for $\GL_r(A)$)} if it satisfies
\[
f|_{k,m}\gamma=f
\]
for all $\gamma \in \GL_r(A)$. We denote by $\mathcal{WM}_{k}^m$ the $\mathbb{C}_{\infty}$-vector space spanned by the weak Drinfeld modular forms of weight $k$ and type $m$. We  also let $\mathcal{WM}_k$ be the $\mathbb{C}_{\infty}$-vector space spanned by all the weak Drinfeld modular forms of weight $k$ and any type.
\end{definition}

For  $(a_1,\dots,a_{r-1})\in A^{r-1}$, we set 
\begin{equation}\label{E:ainv}
\gamma_{a_1,\dots,a_{r-1}}:=\begin{pmatrix}
    1&a_1& & a_{r-1}\\
     &\ddots & & \\
     & & \ddots & \\
     & & & 1
\end{pmatrix}\in \GL_r(A).
\end{equation}
A rigid analytic function $f$ is called \textit{$A$-invariant} if 
$
f(\gamma_{a_1,\dots,a_{r-1}}\cdot \zz)=f(\zz)
$
for any tuple $(a_1,\dots,a_{r-1})\in A^{r-1}$. For any $\zz=(z_1,z_2,\dots,z_r)^{\tr}\in \Omega^r$, we let $\tz:=(z_2,\dots,z_r)^{\tr}\in \Omega^{r-1}$ and, by a slight abuse of notation, write $\zz=(z_1,\tz)\in \Omega^r$. Consider the $A$-lattice $\Lambda_{\tz}$ of rank $r-1$ generated by the elements $z_2,\dots,z_r$. We further define \textit{the uniformizer at infinity} by
\[
u(\zz):=\frac{1}{\exp_{\tilde{\pi}\Lambda_{\tz}}(\tilde{\pi}z_1)}=\frac{1}{\tilde{\pi}}\frac{1}{\exp_{\Lambda_{\tz}}(z_1)}
\]
where the last equality follows from \eqref{E:constantexp}. 

Let $f$ be an $A$-invariant function. By \cite[Prop. 5.4]{BBP18}, for each $n\in \mathbb{Z}$, we know that there exists a unique rigid analytic function $f_n:\Omega^{r-1}\to\mathbb{C}_{\infty}$ such that 
\begin{equation}\label{E:uexp}
f(\zz)=\sum_{n\in \mathbb{Z}}f_n(\tz)u(\zz)^n
\end{equation}
whenever $\zz$ lies in some neighborhood of infinity. Moreover, by \cite[Prop. 3.2.]{Bas14}, each $f_n$ is a weak Drinfeld modular form of weight $k-n$ (for $\GL_{r-1}(A))$. Since $\Omega^r$ is a connected rigid analytic space, the right hand side of \eqref{E:uexp} uniquely determines $f$. We call such an expansion \textit{the $u$-expansion of $f$}. Moreover, an $A$-invariant function $f$ is called \textit{holomorphic at infinity} if $f_n$ is identically zero whenever $n<0$ and  \textit{meromorphic at infinity} if $f_n$ is identically zero for all but finitely many $n<0$.

\begin{definition} Let $k\in \mathbb{Z}_{\geq 0}$ and $m\in \mathbb{Z}/(q-1)\mathbb{Z}$. We call $f\in \mathcal{WM}_{k}^{m}$ \textit{a Drinfeld modular form of weight $k$ and type $m$ (for $\GL_r(A)$)} if $f$ is holomorphic at infinity. Furthermore, we call $f$ \textit{a Drinfeld cusp form} if the function $f_0$ that appears in its $u$-expansion is identically zero. We denote by $\mathcal{M}_{k}^m$ the $\mathbb{C}_{\infty}$-vector space spanned by Drinfeld modular forms of weight $k$ and type $m$. We also let $\mathcal{M}_k$ be the $\mathbb{C}_{\infty}$-vector space spanned by all the Drinfeld modular forms of weight $k$ and any type.
\end{definition}

In what follows, we provide several examples of Drinfeld modular forms.
\begin{example}\label{Ex:1}
	\begin{itemize}
		\item[(i)] For each $\zz=(z_1,\dots,z_r)^{\tr}\in \Omega^r$, let $A^r\zz$ be the $A$-lattice of rank $r$ generated by the entries of $\zz$ and let $\phi^{\zz} $ be the Drinfeld module corresponding to $A^r\zz$. We write 
		\[\phi^{\zz}_\theta:=\theta+g_1(\zz)\tau+\dots+g_{r}(\zz)\tau^{r}.
		\]
		Then, for any $1\leq \mu \leq r$, the function $g_{\mu}:\Omega^r\to \CC_{\infty}$, which we call \textit{the $\mu$-th coefficient form}, sending $\zz\mapsto g_\mu(\zz)$ is a Drinfeld modular form of weight $q^{\mu}-1$ and type $0$ \cite[Prop. 15.12]{BBP18}. We further call $g_r$ \textit{the discriminant function of rank $r$}  which is indeed a nowhere-vanishing Drinfeld cusp form on $\Omega^r$. 
		\item[(ii)] Let $\exp_{\phi^{\zz}}:=\sum_{n\geq 0}\alpha_n(\zz)\tau^n$ be the exponential series of $\phi^{\zz}$. For each $n\geq 1$, the function $\alpha_n:\Omega^r\to \mathbb{C}_{\infty}$ sending each $\zz\in \Omega^r$ to $\alpha_n(\zz)$ is a Drinfeld modular form of weight $q^n-1$ and type $0$ \cite[Prop. 15.3]{BBP18}. Similarly, letting $\log_{\phi^{\zz}}:=\sum_{n\geq 0}\beta_n(\zz)\tau^n$ be the logarithm series of $\phi^{\zz}$, for each $n\geq 1$, the function $\beta_n:\Omega^r\to \mathbb{C}_{\infty}$ sending each $\zz\in \Omega^r$ to $\beta_n(\zz)$ is an element in $\mathcal{M}_{q^n-1}^{0}$.
        
		\item[(iii)] We now introduce a non-zero type Drinfeld modular form which we call \textit{Gekeler's $h$-function} (see \cite{Gek17} for more details). By convention, we set $h_1:=-1$ and let $r\geq 2$. Consider the tuple $\nu=(\nu_1,\dots,\nu_r) \in \mathbb{F}_q^{r}\setminus \{(0,\dots,0)\}$. We call $\nu$ \textit{monic} if $\nu_{\ell}=1$ for the largest subscript $\ell$ satisfying $\nu_\ell\neq 0$. For any  $\zz=(z_1,\dots,z_r)^{\tr}\in \Omega^r$, we define 
		\[
		\Eis_{\nu}(\zz):=\sum_{\substack{(a_1,\dots,a_r)\in K^{r}\\ (a_1,\dots,a_r)\equiv \theta^{-1}\nu \pmod{A^r} }}\frac{1}{a_1z_1+\dots+a_rz_r}.
		\]
		 Then \textit{Gekeler's $h$-function} $h_r:\Omega^r\to \CC_{\infty}$  is given by
		\[
	h_r(\zz):=\tilde{\pi}^{\frac{1-q^r}{q-1}}(-\theta)^{1/(q-1)}\prod_{\substack{\nu\in \mathbb{F}_q^{r}\\\nu \text{ monic}}}\Eis_{\nu}(\zz).
	\] 
We note that it is a nowhere-vanishing Drinfeld cusp form  of weight $(q^r-1)/(q-1)$ and type $1$. Furthermore, by \cite[Cor. 6.3]{BB17}, whenever $\zz$ lies in some neighborhood of infinity, we have 
\begin{equation}\label{E:huexp}
h_r(\zz)=(-1)^rh_{r-1}(\tz)^qu+O(u^q)
\end{equation}
where we mean by $O(u^q)$ a particular element of the ring of power series in $u$ with coefficients in rigid analytic functions on $\Omega^{r-1}$ so that the coefficient of each $u^i$ with $i<q$ is zero. 
\end{itemize}
\end{example}

\subsection{Hecke operators on Drinfeld modular forms} From now on, we will fix a non-constant monic irreducible polynomial $\mathfrak{p}\in A$. Consider 
\[
\delta:=\begin{pmatrix}
    \mathfrak{p}&\\
     & \Id_{r-1}
\end{pmatrix}\in \GL_r(K)\cap \Mat_r(A)
\]
which is a diagonal matrix whose first entry is $\mathfrak{p}$ and the other diagonal entries are equal to $1$.

For each $\ell\in \{1,\dots,r\}$ and a tuple $\bb=(b_1,\dots,b_r)\in A^r$, we define the matrix $\beta_{\ell,\bb}\in \GL_r(K)\cap \Mat_{r}(A)$ given by
\[
\beta_{\ell,\bb}:=\begin{pmatrix}
    1& &  & b_1& & \\
     & 1 & & b_2 & & \\
     & & \ddots & \vdots & & \\
      & &  & \vdots & \ddots& \\
      & & & b_r & & 1
\end{pmatrix}
\]
so that the $\ell$-th column is $(b_1,\dots,b_r)^{\tr}$ and for $i\neq \ell$, the $i$-th column is the $i$-th unit vector. Furthermore, we let $\widetilde{\beta_{\ell,\bb}}\in \GL_{r-1}(K)\cap \Mat_{r-1}(A)$ be the matrix formed by removing the first row and the first column of $\beta_{\ell,\bb}$.  Moreover, for any $\bb=(b_1, \dots, b_r) \in A^r$, we define $\mathbf{b}_{\ell}:=(b_1,\dots,b_{\ell-1},\mathfrak{p},0,\dots,0)\in A^{r}$.
Then we set 
\[
B_{\ell,r}:=\{\beta_{\ell,\bb_{\ell}} \ \ | \ \ \bb=(b_1,\dots,b_r)\in A^r, \ \ \deg_{\theta}(b_i)<\deg_{\theta}(\mathfrak{p}) \text{ for all $i<\ell$}\}.
\]

The following result of Basson is crucial to determine our Hecke operators explicitly.
\begin{proposition}\cite[Prop. 3.3]{Bas23}\label{P:repr} The union $\cup_{\ell=1}^rB_{\ell,r}$ is a set of coset representatives for $\GL_r(A)\setminus \GL_r(A)\delta \GL_r(A)$.
\end{proposition}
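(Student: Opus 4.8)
The plan is to classify the left cosets in $\GL_r(A)\backslash\GL_r(A)\delta\GL_r(A)$ by Hermite normal forms over the Euclidean ring $A=\FF_q[\theta]$. I identify a full-rank matrix $\gamma\in\Mat_r(A)$ with the row lattice $A^{1\times r}\gamma\subseteq A^{1\times r}$ it generates. Left multiplication by $U\in\GL_r(A)$ is an $A$-linear row operation, so it leaves $A^{1\times r}\gamma$ unchanged; conversely, if two full-rank matrices have the same row lattice then each is an $A$-combination of the rows of the other, and comparing the two relations over $K$ shows they differ by left multiplication by an element of $\GL_r(A)$. Hence the row lattice is a complete invariant of the left coset $\GL_r(A)\gamma$, and the (row) Hermite normal form provides a canonical representative of each such coset: it is an upper-triangular matrix $H=U\gamma$ with $U\in\GL_r(A)$, monic diagonal entries, and each above-diagonal entry reduced modulo the diagonal entry of its column. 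Existence and uniqueness of $H$ over a Euclidean domain is standard, so I would record this correspondence first.

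The key computation is to pin down the shape of $H$ when $\gamma$ lies in the double coset. Then $\det\gamma=\det(g\delta h)\doteq\mathfrak{p}$, so $\det H$ is the monic associate $\mathfrak{p}$, and the monic diagonal entries of $H$ are monic polynomials whose product is the irreducible $\mathfrak{p}$. A monic factorization of an irreducible forces exactly one diagonal entry to equal $\mathfrak{p}$, say in position $\ell$, with all others equal to $1$. Reduction modulo a diagonal $1$ makes the entries above it vanish, so every column other than the $\ell$-th equals the corresponding unit vector; reduction modulo the diagonal entry $\mathfrak{p}$ in column $\ell$ forces the entries $b_1,\dots,b_{\ell-1}$ above it to satisfy $\deg_\theta b_i<\deg_\theta\mathfrak{p}$, while upper-triangularity gives $b_{\ell+1}=\dots=b_r=0$ and $b_\ell=\mathfrak{p}$. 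This is exactly the description of $\beta_{\ell,\bb}\in B_{\ell,r}$, so $H\in\cup_{\ell=1}^r B_{\ell,r}$; thus every left coset has a representative in this union.

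To finish I would verify the two opposite inclusions. Each $\beta_{\ell,\bb}\in B_{\ell,r}$ is upper-triangular with diagonal $(1,\dots,1,\mathfrak{p},1,\dots,1)$, so $\det\beta_{\ell,\bb}=\mathfrak{p}$; its elementary divisors $d_1\mid\cdots\mid d_r$ are monic with $d_1\cdots d_r\doteq\mathfrak{p}$ irreducible, forcing $d_1=\cdots=d_{r-1}=1$ and $d_r=\mathfrak{p}$, the same invariant factors as $\delta$. Since elementary divisors are a two-sided $\GL_r(A)$-invariant, $\beta_{\ell,\bb}\in\GL_r(A)\delta\GL_r(A)$. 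Moreover each $\beta_{\ell,\bb}$ is already in Hermite normal form, and distinct pairs $(\ell,\bb)$ give distinct matrices in Hermite normal form, hence distinct row lattices and distinct left cosets. Combining these points, $(\ell,\bb)\mapsto\GL_r(A)\beta_{\ell,\bb}$ is a bijection onto $\GL_r(A)\backslash\GL_r(A)\delta\GL_r(A)$, which is the claim. As a numerical check, $\sum_{\ell=1}^r|\mathfrak{p}|^{\ell-1}=(|\mathfrak{p}|^r-1)/(|\mathfrak{p}|-1)$, which is precisely the number of hyperplanes in $(A/\mathfrak{p}A)^r$, i.e.\ the number of sublattices of $A^{1\times r}$ containing $\mathfrak{p}A^{1\times r}$ with cyclic quotient $A/\mathfrak{p}A$.

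The genuinely routine inputs here are the existence and uniqueness of Hermite normal form over a Euclidean domain and the invariance of elementary divisors under $\GL_r(A)$ on both sides. The step demanding the most care, and what I regard as the main obstacle, is the bookkeeping that matches the Hermite reduction conditions to the exact defining constraints of $B_{\ell,r}$: that a diagonal $1$ clears its column, that the lone $\mathfrak{p}$-pivot sits in a single column $\ell$ whose above-diagonal entries acquire the degree bound $\deg_\theta b_i<\deg_\theta\mathfrak{p}$, and that $\ell$ is canonically recovered as the location of the unique non-unit pivot. This identification, rather than any deep structural fact, is where the argument must be executed precisely.
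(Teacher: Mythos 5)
Your argument is correct and complete. Note that the paper does not actually prove this proposition: it is quoted verbatim from Basson (\cite[Prop.~3.3]{Bas23}), so there is no internal proof to compare against. Your route --- identifying left cosets $\GL_r(A)\gamma$ with row lattices, taking the Hermite normal form over the Euclidean ring $A=\FF_q[\theta]$ as the canonical coset representative, using irreducibility of $\mathfrak{p}$ to force a single non-unit pivot, and then using Smith normal form to confirm that each $\beta_{\ell,\bb}$ really lies in the double coset $\GL_r(A)\delta\GL_r(A)$ --- is the standard and essentially only sensible way to establish such a statement, and all the individual steps (the coset--lattice dictionary, existence and uniqueness of HNF, the vanishing of entries above a diagonal $1$, the degree bound above the $\mathfrak{p}$-pivot, and the invariance of elementary divisors under two-sided $\GL_r(A)$-action) are correctly executed; the counting check against $(|\mathfrak{p}|^r-1)/(|\mathfrak{p}|-1)$ is a nice confirmation.
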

Now we are ready to construct our Hecke operators.
\begin{definition}\label{D:Hecke} We define our Hecke operator $T_{\mathfrak{p},r}:\mathcal{WM}_k\to \mathcal{WM}_k$ by 
\[
T_{\mathfrak{p},r}(f):=\mathfrak{p}^k\sum_{\gamma\in \cup_{\ell=1}^rB_{\ell,r}}f|_{k}\gamma , \ \ f\in \mathcal{WM}_k.
\]
 \end{definition}
By \cite[Cor. 3.12]{Bas23}, we know that $ T_{\mathfrak{p},r} $ induces an endomorphism of $\mathcal{M}_k$ for each $k\geq 0$.

\begin{remark} \label{R:Hecke}
\begin{itemize}
\item[(i)] We emphasize that although, in this subsection, our exposition is mainly based on \cite[\S3]{Bas23}, we use $\mathfrak{p}^k$-multiple of the operator studied by Basson. Since $T_{\mathfrak{p},r}$ is $\mathbb{C}_{\infty}$-linear, this modification does not effect the proofs of the results obtained in \cite[\S3]{Bas23}. Our main reason to use the operator defined  in Definition \ref{D:Hecke} instead is to be compatible with the Hecke operators studied by Gekeler in \cite[\S7]{Gek88} and Pellarin and Perkins in \cite[\S5]{PP18}.
\item[(ii)] We note that our notation $T_{\mathfrak{p},r}$ does not include $k$. However, since, throughout the paper, we will analyze $T_{\mathfrak{p},r}(f)$ for some $f\in \mathcal{WM}_k$, the integer $k$ should be clear from the weight of the weak Drinfeld modular form $f$. 
\end{itemize}
\end{remark}

To describe the effect of $T_{\mathfrak{p},r}$ on the $u$-expansion of a Drinfeld modular form, first we need to introduce some more notation. 
For $2\leq \ell \leq r$, we let
\[
\Lambda_{\ell,\bb_{\ell}}^{'}:=A^{r-1}\widetilde{\beta_{\ell,\bb_{\ell}}}\cdot \tz, 
\]
which is the $A$-lattice of rank $r-1$ generated by the last $(r-1)$-th entries of $ \beta_{\ell,\bb_{\ell}}
\cdot \zz $. More precisely, observe that, for $2\leq \ell \leq r-1$, the $A$-lattice $\Lambda'_{\ell,\bb_{\ell}}$ is generated by $z_2+b_2z_{\ell},\dots,z_{\ell-1}+b_{\ell-1}z_{\ell},\mathfrak{p}z_{\ell}$ $,z_{\ell+1},\dots,z_r$ over $A$ and therefore, $\Lambda'_{\ell,\bb_{\ell}}$ is a sub $A$-lattice of $\Lambda_{\tz}$ so that 
\begin{equation}\label{E:lattice1}
\Lambda_{\tz}/\Lambda'_{\ell,\bb_{\ell}}\cong A/\mathfrak{p}A.
\end{equation}
When $\ell=r$, the $A$-lattice $\Lambda'_{r,\bb_{r}}$ is generated by $\frac{z_2+b_2}{\mathfrak{p}},\dots,\frac{z_{r-1}+b_{r-1}}{\mathfrak{p}}$ and $1$ over $A$ and hence $\Lambda_{\tz}$ is a sub $A$-lattice of $\Lambda'_{r,\bb_{r}}$ so that 
\[
\Lambda'_{r,\bb_{r}}/\Lambda_{\tz}\cong (A/\mathfrak{p}A)^{r-2}.
\]
Note that  $\mathfrak{p}\Lambda'_{r,\bb_{r}}$ is is generated by $z_2+b_2,\dots,z_{r-1}+b_{r-1}$ and $\mathfrak{p}$  and hence is a sub $A$-lattice of $\Lambda_{\tz}$ so that 
\begin{equation}\label{E:lattice2}
\Lambda_{\tz}/\mathfrak{p}\Lambda'_{r,\bb_{r}}\cong A/\mathfrak{p}A.
\end{equation}
 Moreover, we  define
\[
L_{\ell,\bb_{\ell}}:=\begin{cases}
    \exp_{\tilde{\pi}\Lambda_{\ell,\bb_{\ell}}^{'}}(\tilde{\pi}\Lambda_{\tz}) & \text{ if } \ell=2,\dots,r-1\\
 \exp_{\mathfrak{p}\tilde{\pi}\Lambda_{\ell,\bb_{\ell}}^{'}}(\tilde{\pi}\Lambda_{\tz}) & \text{ if } \ell=r.    
\end{cases}
\]
Thus, by \eqref{E:lattice1}, \eqref{E:lattice2} and \cite[Prop. 2.3(a)]{BBP18}, each $L_{\ell,\mathbf{b}_{\ell}}$ is isomorphic to $ A/\mathfrak{p}A$ and hence forms a finite dimensional $\mathbb{F}_q$-vector space. Finally, for any $a\in A\setminus\{0\}$, we let 
\begin{equation}\label{E:defua}
u_{a}(\zz):=\exp_{\tilde{\pi}\Lambda_{\tz}}(a\tilde{\pi}z_1)^{-1}.
\end{equation}

In what follows, we will investigate the affect of Hecke operators on the $u$-expansion of Drinfeld modular forms. Before we state the next theorem, for an $\mathbb{F}_q$-lattice $\Lambda$ and $n\geq 0$, we let $G_{n,\Lambda}(X)\in \mathbb{C}_{\infty}[X]$ be the $n$-th Goss polynomial corresponding to $\Lambda$. For more details on Goss polynomials, we refer the reader to \cite[\S3]{Gek88}.

\begin{theorem}[{cf. \cite[Prop. 4.9]{Gek26}}]\cite[Thm. 3.6, Cor. 3.11]{Bas23}\label{T:HeckeDr} Let $f\in \mathcal{M}_k$ so that its $u$-expansion is given by $f=\sum_{n=0}^{\infty}f_nu^n$. Then, for any $\zz\in \Omega^{r}$ lying in some neighborhood of infinity, we have 
\begin{equation}\label{E:heckeu}
T_{\mathfrak{p},r}f(\zz)=\mathfrak{p}^k\sum_{n=0}^{\infty}f_n(\tz)u_{\mathfrak{p}}(\zz)^n+\mathfrak{p}^k\sum_{n=0}^{\infty}\sum_{\substack{2\leq \ell \leq r\\ \beta_{\ell, \bb_{\ell}}\in B_{\ell,r}}}(f_n|_{k-n}\widetilde{\beta_{\ell, \bb_{\ell}}})( \tz)G_{n,L_{\ell,\bb_{\ell}}}(u(\zz)).
\end{equation}
In particular, the constant term of the $u$-expansion of $T_{\mathfrak{p},r}(f)$ is $T_{\mathfrak{p},r-1}(f_0)$ and the linear term of the $u$-expansion of $T_{\mathfrak{p},r}(f)$ is equal to $\mathfrak{p}T_{\mathfrak{p},r-1}(f_1)$.

\end{theorem}
\begin{proof} Since the idea of the proof of the theorem will be later used in \S4, we provide its crucial details for completeness. We first let $\ell=1$. Since $\bb_1=(\mathfrak{p},0,\dots,0)$ and $j(\beta_{1,\bb_1};\zz)=1$, we see that 
\[
(f|_{k}\beta_{1,\bb_1})( \zz)=\sum_{n=0}^{\infty}f_n(\tz)u_{\mathfrak{p}}(\zz)^n,
\]
that is, the first summation in the right hand side of \eqref{E:heckeu} corresponds to $\ell=1$. 

Now, let $2\leq \ell <r$. Note that 
\[
\beta_{\ell,\bb_{\ell}}\cdot \zz=(z_1+b_1z_{\ell},\dots, z_{\ell-1}+b_{\ell-1}z_{\ell},\mathfrak{p}z_{\ell},z_{\ell+1},\dots,z_r)^{\tr}\in \Omega^{r}.
\]
Since $j(\beta_{\ell, \bb_{\ell}};\zz)=1$, we also have
\[
(f|_{k}\beta_{\ell,\bb_{\ell}})(\zz)=f(\beta_{\ell,\bb_{\ell}}\cdot \zz).
\]
For any $\bb_{\ell}=(b_1, \dots, b_{\ell-1}, \mathfrak{p}, 0, \dots, 0)\in A^r$, we define  $\widetilde{\bb}_{\ell}:=(b_2,\dots,b_{\ell-1},\mathfrak{p},0,\dots,0)\in A^{r-1}$ so that $\bb_{\ell}=(b_1, \widetilde{\bb}_{\ell})$. For each $n\geq 0$, consider the summation given by
\[
\sum_{\substack{b_1\in A\\\deg_{\theta}(b_1)<\deg_{\theta}(\mathfrak{p})}}f_n(\widetilde{\beta_{\ell,(b_1,\widetilde{\bb}_{\ell})}}\cdot \tz)(\exp_{\tilde{\pi}\Lambda'_{\ell,(b_1,\widetilde{\bb}_{\ell})}}(\tilde{\pi}(z_1+b_1z_{\ell})))^{-n}.
\]
Here, we remind the reader that 
\[
\widetilde{\beta_{\ell,\bb_{\ell}}}\cdot \tz=(z_2+b_2z_{\ell},\dots,z_{\ell-1}+b_{\ell-1}z_{\ell},\mathfrak{p}z_{\ell},z_{\ell+1},\dots,z_{r})^{\tr}\in \Omega^{r-1}
\]
and hence the term $f_n(\widetilde{\beta_{\ell,(b_1,\widetilde{\bb}_{\ell})}}\cdot \tz)$ has no dependence on $b_1$ and $z_1$. To calculate the resulting $u$-expansion, we concentrate on the term $ \exp_{\tilde{\pi}\Lambda'_{\ell,(b_1, \widetilde{\bb}_{\ell})}}(\tilde{\pi}(z_1+b_1z_{\ell})) $. First, we observe that 
\begin{multline}\label{E:gosspolynomialcalc}
\sum_{\substack{b_1\in A\\\deg_{\theta}(b_1)<\deg_{\theta}(\mathfrak{p})}}\exp_{\tilde{\pi}\Lambda'_{\ell,(b_1,\widetilde{\bb}_{\ell})}}(\tilde{\pi}(z_1+b_1z_{\ell}))^{-n}\\
=\sum_{\substack{b_1\in A\\\deg_{\theta}(b_1)<\deg_{\theta}(\mathfrak{p})}}\frac{1}{(\exp_{\tilde{\pi}\Lambda'_{\ell,(b_1,\widetilde{\bb}_{\ell})}}(\tilde{\pi}z_1)+\exp_{\tilde{\pi}\Lambda'_{\ell,(b_1,\widetilde{\bb}_{\ell})}}(\tilde{\pi}b_1z_{\ell}))^n}
\\=
\sum_{v\in L_{\ell,(b_1,\widetilde{\bb}_{\ell})}}\frac{1}{(\exp_{\tilde{\pi}\Lambda'_{\ell,(b_1,\widetilde{\bb}_{\ell})}}(\tilde{\pi}z_1)+v)^n}.
\end{multline}
Here the last equality follows from the fact that all the elements of $L_{\ell,(b_1,\widetilde{\bb}_{\ell})}$ are of the form $\exp_{\tilde{\pi}\Lambda'_{\ell,(b_1,\widetilde{\bb}_{\ell})}}(\tilde{\pi}b_1z_{\ell}) $ where $b_1\in A$ such that $\deg_{\theta}(b_1)<\deg_{\theta}(\mathfrak{p})$.
On the other hand, note that
\begin{multline}\label{E:sum2}
\sum_{v\in L_{\ell,(b_1,\widetilde{\bb}_{\ell})}}\frac{1}{\exp_{\tilde{\pi}\Lambda'_{\ell,(b_1,\widetilde{\bb}_{\ell})}}(\tilde{\pi}z_1)+v}=\sum_{\substack{b_1\in A\\\deg_{\theta}(b_1)<\deg_{\theta}(\mathfrak{p})}}\exp_{\tilde{\pi}\Lambda'_{\ell,(b_1,\widetilde{\bb}_{\ell})}}(\tilde{\pi}(z_1+b_1z_{\ell}))^{-1}\\=\sum_{\substack{b_1\in A\\\deg_{\theta}(b_1)<\deg_{\theta}(\mathfrak{p})}}\sum_{\lambda\in \tilde{\pi}\Lambda'_{\ell,(b_1,\widetilde{\bb}_{\ell})}}\frac{1}{\tilde{\pi}z_1+\tilde{\pi}b_1z_{\ell}+\lambda}=\exp_{\tilde{\pi}\Lambda_{\widetilde\zz}}(\tilde{\pi}z_1)^{-1}=u(\zz).
\end{multline}
Here, the first equality follows from \eqref{E:gosspolynomialcalc}, the second equality from first taking the logarithmic derivative of both sides of
\[
\exp_{\tilde{\pi}\Lambda'_{\ell,(b_1,\widetilde{\bb}_{\ell})}}(X)=X\prod_{\lambda\in \tilde{\pi}\Lambda'_{\ell,(b_1,\widetilde{\bb}_{\ell})}}\left(1-\frac{X}{\lambda}\right)
\]
with respect to $X$ and then letting $X=\tilde{\pi}(z_1+b_1z_{\ell})$. Finally, the third equality follows from the fact that the $A$-lattice $\tilde{\pi}\Lambda_{\tz}$ consists of elements of the form $\tilde{\pi}(b_1z_{\ell}+\lambda)$ where $b_1\in A$ varies through polynomials whose degree less than $\deg_{\theta}(\mathfrak{p})$ and $\lambda$ varies through elements of $ \Lambda'_{\ell,(b_1,\widetilde{\bb}_{\ell})}$. Thus, combining \eqref{E:gosspolynomialcalc} and \eqref{E:sum2} with \cite[Prop. 3.4]{Gek88} (see also \cite[Prop. 2.1]{Bas23}), we have
\[
\sum_{\substack{b_1\in A\\\deg_{\theta}(b_1)<\deg_{\theta}(\mathfrak{p})}}\exp_{\tilde{\pi}\Lambda'_{\ell,(b_1,\widetilde{\bb}_{\ell})}}(\tilde{\pi}(z_1+b_1z_{\ell}))^{-n}
= G_{n,L_{\ell,(b_1,\widetilde{\bb}_{\ell})}}(u(\zz))
\]
and hence we obtain
\begin{equation*}
     \sum_{\substack{b_1\in A\\\deg_{\theta}(b_1)<\deg_{\theta}(\mathfrak{p})}}f_n(\widetilde{\beta_{\ell,(b_1,\widetilde{\bb}_{\ell})}\cdot \zz})\exp_{\tilde{\pi}\Lambda'_{\ell,(b_1,\widetilde{\bb}_{\ell})}}(\tilde{\pi}(z_1+b_1z_{\ell}))^{-n}= f_n(\widetilde{\beta_{\ell,(b_1,\widetilde{\bb}_{\ell})}\cdot \zz})G_{n,L_{\ell,(b_1,\widetilde{\bb}_{\ell})}}(u(\zz)).
\end{equation*}

What remains to analyze is the case $\ell=r$. In this case, $j(\beta_{r,\bb_r};\zz)=\mathfrak{p}$ and 
\[
\beta_{r,\bb_r}\cdot\zz=\left(\frac{z_1+b_1}{\mathfrak{p}},\dots,\frac{z_{r-1}+b_{r-1}}{\mathfrak{p}},1\right)^{\tr}\in \Omega^r.
\]
We now have 
\[
(f|_k\beta_{r,\bb_r})(\zz)=j(\beta_{r,\bb_r};\zz)^{-k}f(\beta_{r,\bb_r}\cdot\zz)=\mathfrak{p}^{-k}f(\beta_{r,\bb_r}\cdot\zz).
\]
Note that, for any non-zero $z\in \mathbb{C}_{\infty}$, by \eqref{E:constantexp}, we have
\begin{equation}\label{E:rel1}
\exp_{\mathfrak{p} \tilde{\pi}\Lambda'_{r,\bb_r}}(\mathfrak{p}\tilde{\pi}z)=\mathfrak{p}\exp_{\tilde{\pi}\Lambda'_{r,\bb_r}}(\tilde{\pi}z).
\end{equation}
We now obtain
\begin{multline*}
\sum_{\substack{b_1\in A\\\deg_{\theta}(b_1)<\deg_{\theta}(\mathfrak{p})}}
    \mathfrak{p}^{-k}f_n(\widetilde{\beta_{r,\bb_r}\cdot \zz})\exp_{\tilde{\pi}\Lambda'_{r,\bb_r}}\left(\frac{\tilde{\pi}(z_1+b_1)}{\mathfrak{p}}\right)^{-n}\\
    =\mathfrak{p}^{-k}f_n(\widetilde{\beta_{r,\bb_r}\cdot \zz})\mathfrak{p}^n\sum_{\substack{b_1\in A\\\deg_{\theta}(b_1)<\deg_{\theta}(\mathfrak{p})}}\exp_{\mathfrak{p} \tilde{\pi}\Lambda'_{r,\bb_r}}(\tilde{\pi}(z_1+b_1))^{-n}\\
    =\mathfrak{p}^{n-k}f_n(\widetilde{\beta_{r,\bb_r}\cdot \zz})G_{n,L_{r,\bb_r}}(u(\zz))
    =(f_n|_{k-n}\widetilde{\beta_{r,\bb_r}})( \tz)G_{n,L_{r,\bb_r}}(u(\zz)).
    \end{multline*}
    Here the first equality follows from \eqref{E:rel1} and the second equality follows from a similar calculation as in \eqref{E:gosspolynomialcalc} and \eqref{E:sum2} as well as \cite[Prop. 3.4]{Gek88} (see also \cite[Prop. 2.1]{Bas23}). Thus, we establish the first assertion of the theorem. To prove the second assertion, we recall that $f_0$ ($f_1$ respectively) is a weak Drinfeld modular form of weight $k$ ($k-1$ respectively). On the other hand, for $n\geq 2$, again by \cite[Prop. 3.4]{Gek88}, the $n$-th Goss polynomial $G_{n,\Lambda}$ associated to any $A$-lattice $\Lambda$ has no constant term and linear term. Therefore, the only contribution to the coefficient of the constant (linear respectively) term in the $u$-expansion of $T_{\mathfrak{p}}(f)$ comes from $T_{\mathfrak{p},r-1}(f_0)$ ($\mathfrak{p}T_{\mathfrak{p},r-1}(f_1)$ respectively) as desired.
\end{proof}

We obtain the following proposition derived from the work of Gekeler \cite[Cor. 7.6]{Gek88} and Theorem \ref{T:HeckeDr}. We remark that later in Corollary \ref{C:main2}, we recover Proposition \ref{P:hfunc} as a special case and hence provide another proof for the same result.

\begin{proposition}[{cf. \cite[Prop. 5.9]{Gek26}}]\label{P:hfunc} We have 
\[
T_{\mathfrak{p},r}(h_r)(\zz)=\mathfrak{p}^{1+q+\cdots+q^{r-2}}h_r(\zz).
\]    
\end{proposition}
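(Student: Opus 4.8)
The plan is to induct on $r$, taking as base case the rank-two identity $T_{\mathfrak{p},2}(h_2)=\mathfrak{p}\,h_2$ of Gekeler \cite[Cor. 7.6]{Gek88}; note that for $r=2$ the exponent $1+q+\cdots+q^{r-2}$ degenerates to $1$. Two ingredients will drive the inductive step: a rigidity principle forcing $T_{\mathfrak{p},r}(h_r)$ to be a scalar multiple of $h_r$, and a coefficient computation pinning down the scalar through Theorem~\ref{T:HeckeDr}.

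First I would isolate the characteristic-$p$ identity $T_{\mathfrak{p},s}(g^q)=\bigl(T_{\mathfrak{p},s}(g)\bigr)^q$, valid for every rank $s$ and every $g$ of weight $k$: since $g^q|_{qk}\beta=(g|_k\beta)^q$ and $\bigl(\sum_\beta g|_k\beta\bigr)^q=\sum_\beta(g|_k\beta)^q$ in characteristic $p$, the weight bookkeeping $\mathfrak{p}^{qk}=(\mathfrak{p}^k)^q$ gives the claim. Applying Theorem~\ref{T:HeckeDr} to $h_r$, the expansion \eqref{E:huexp} shows that the constant $u$-coefficient of $T_{\mathfrak{p},r}(h_r)$ vanishes and that its linear coefficient equals $\mathfrak{p}\,T_{\mathfrak{p},r-1}\bigl((-1)^r h_{r-1}^q\bigr)=(-1)^r\mathfrak{p}\,\bigl(T_{\mathfrak{p},r-1}(h_{r-1})\bigr)^q$, using $\CC_{\infty}$-linearity and the equivariance above. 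Feeding in the inductive hypothesis $T_{\mathfrak{p},r-1}(h_{r-1})=\mathfrak{p}^{1+q+\cdots+q^{r-3}}h_{r-1}$ turns this into $(-1)^r\mathfrak{p}^{\,1+q(1+q+\cdots+q^{r-3})}h_{r-1}^q=\mathfrak{p}^{1+q+\cdots+q^{r-2}}\,(-1)^r h_{r-1}^q$, i.e.\ exactly $\mathfrak{p}^{1+q+\cdots+q^{r-2}}$ times the linear coefficient of $h_r$.

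To promote this to the full identity, I would record that $T_{\mathfrak{p},r}(h_r)\in\cM_{(q^r-1)/(q-1)}$ by \cite[Cor. 3.12]{Bas23}, that it has type $1$ (all coset representatives $\beta_{\ell,\bb}$ satisfy $\det\beta_{\ell,\bb}=\mathfrak{p}$, so writing $\beta_{\ell,\bb}\gamma=\gamma_\beta\beta'$ for $\gamma\in\GL_r(A)$ forces $\det\gamma_\beta=\det\gamma$, whence $T_{\mathfrak{p},r}$ preserves type), and that it is cuspidal by the vanishing of its constant coefficient. Since $h_r$ is nowhere vanishing on $\Omega^r$ and, by the previous paragraph, both $h_r$ and $T_{\mathfrak{p},r}(h_r)$ vanish to order exactly one at infinity (their linear coefficients are nonzero multiples of $h_{r-1}^q\neq0$), the quotient $\psi:=T_{\mathfrak{p},r}(h_r)/h_r$ is a rigid analytic $\GL_r(A)$-invariant function of weight $0$ and type $0$ that is holomorphic at infinity; hence $\psi\in\cM_0^0=\CC_{\infty}$ is constant. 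Comparing linear $u$-coefficients then forces $\psi=\mathfrak{p}^{1+q+\cdots+q^{r-2}}$, completing the induction.

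The main obstacle I anticipate is the rigidity step rather than the arithmetic: one must check carefully that $\psi$ extends holomorphically and invertibly across the cusp, which hinges on the sharp form $h_r=(-1)^rh_{r-1}^q u+O(u^q)$ of \eqref{E:huexp} (guaranteeing order-one vanishing) and on the fact $\cM_0^0=\CC_{\infty}$. By contrast the recursion is governed entirely by the Frobenius-equivariance, which is the clean mechanism that raises the inductive factor $\mathfrak{p}^{1+q+\cdots+q^{r-3}}$ to its $q$-th power and thereby produces the exponent $1+q+\cdots+q^{r-2}$; I would state it as a short lemma and apply it at ranks $r$ and $r-1$.
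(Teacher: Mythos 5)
Your proposal is correct, and its skeleton coincides with the paper's proof: induction on $r$ with Gekeler's rank-two result as base case, cuspidality plus a rigidity statement to force $T_{\mathfrak{p},r}(h_r)=\alpha h_r$, and then identification of $\alpha$ by comparing linear $u$-coefficients via Theorem~\ref{T:HeckeDr}, \eqref{E:huexp}, and the Frobenius equivariance $T_{\mathfrak{p},r-1}(h_{r-1}^q)=(T_{\mathfrak{p},r-1}(h_{r-1}))^q$ (which the paper uses silently and you rightly isolate as a lemma). The one place you genuinely diverge is the rigidity step: the paper cites \cite[Prop.~16.14(b), Thm.~17.6]{BBP18} to the effect that $h_r$ spans the cusp forms of weight $\frac{q^r-1}{q-1}$ and type $1$, and \cite[Cor.~3.12]{Bas23} for preservation of weight and cuspidality, so that $T_{\mathfrak{p},r}(h_r)=\alpha h_r$ is immediate; you instead form the quotient $\psi=T_{\mathfrak{p},r}(h_r)/h_r$ and argue $\psi\in\cM_0^0=\CC_\infty$ using that $h_r$ is nowhere vanishing on $\Omega^r$ and has a simple zero at infinity by \eqref{E:huexp}, together with your check that $T_{\mathfrak{p},r}$ preserves type. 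Both routes rest on structural facts from \cite{BBP18}; yours trades the one-dimensionality of a specific space of cusp forms for the (arguably more elementary) statement $\cM_0^0=\CC_\infty$, at the cost of having to verify type preservation by hand and that $\psi$ is genuinely holomorphic at infinity, which you correctly flag as the delicate point and which does go through since the leading $u$-coefficient $(-1)^r h_{r-1}^q$ of $h_r$ is invertible.
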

\begin{proof} We proceed by induction on $r\geq 2$. When $r=2$, the proposition holds by \cite[Cor. 7.6]{Gek88}. Assume that the proposition holds for $r-1$. On one hand, by \cite[Prop. 16.14(b), Thm. 17.6]{BBP18}, $h_r\in \mathcal{M}_{(q^r-1)/(q-1)}^{1}$ is the Drinfeld cusp form of smallest non-zero weight which generates $ \mathcal{M}_{(q^r-1)/(q-1)}^{1}$ over $\mathbb{C}_{\infty}$. On the other hand, by \cite[Cor. 3.12]{Bas23}, $T_{\mathfrak{p},r}$ is an operator preserving the weight of the Drinfeld modular forms and sends cusp forms to cusp forms. Thus, we see that $T_{\mathfrak{p},r}(h_r)=\alpha h_r$ for some $\alpha\in \mathbb{C}_{\infty}$. Using \eqref{E:huexp}, the induction hypothesis and Theorem \ref{T:HeckeDr}, we see that the linear term of the $u$-expansion of $T_{\mathfrak{p},r}(h_r)$ is 
\begin{multline*}
\alpha (-1)^rh_{r-1}^q=\mathfrak{p}T_{\mathfrak{p},r-1}((-1)^r h_{r-1}^q)=(-1)^r\mathfrak{p}T_{\mathfrak{p},r-1}(h_{r-1}^q)=(-1)^r\mathfrak{p} (T_{\mathfrak{p},r-1}(h_{r-1}))^q\\
=(-1)^r\mathfrak{p}(\mathfrak{p}^{1+q+\dots+q^{r-3}}h_{r-1})^q= \mathfrak{p}^{1+q+\cdots+q^{r-2}} (-1)^rh_{r-1}^q.
\end{multline*}
Here, to avoid the ambiguity, we note that $T_{\mathfrak{p},r-1}$ is used to denote the Hecke operator acting on $\mathcal{M}_{\frac{q^r-q}{q-1}}$ (applied to $(-1)^rh_{r-1}^q$ and $h_{r-1}^q$) and $\mathcal{M}_{\frac{q^{r-1}-1}{q-1}}$ (applied to $h_{r-1}$). Moreover, the third equality follows from the fact that (compare with \cite[Prop. 3.10]{Gek26})

\begin{multline*}
T_{\mathfrak{p},r-1}(h_{r-1}^q)(\zz)=\mathfrak{p}^{\frac{q^r-q}{q-1}}\sum_{\beta_{\ell,\bb}\in \cup_{\ell=1}^rB_{\ell,r}}h_{r-1}(\beta_{\ell,\bb}\cdot \zz)^{q}j(\beta_{\ell,\bb},\zz)^{\frac{q^r-q}{q-1}}\\=\left(\mathfrak{p}^{\frac{q^{r-1}-1}{q-1}}\sum_{\beta_{\ell,\bb}\in \cup_{\ell=1}^rB_{\ell,r}}h_{r-1}(\beta_{\ell,\bb}\cdot \zz)j(\beta_{\ell,\bb};\zz)^{\frac{q^{r-1}-1}{q-1}} \right)^q
= \left(T_{\mathfrak{p},r-1}(h_{r-1}(\zz) \right)^q.
\end{multline*}
Hence $\alpha= \mathfrak{p}^{1+q+\cdots+q^{r-2}} $, finishing the proof of the proposition.
\end{proof}

\section{Vectorial Drinfeld modular forms (VDMFs)}
In this section, we introduce vectorial Drinfeld modular forms of dimension one and dimension $r\geq 2$ separately. Our objects may be seen as an arbitrary rank analogue of functions studied in \cite{Pel12} and \cite{PP18}. 

For each $n\geq 1$, recall the affinoid subdomain $\Omega_n^r\subset \Omega^r$ defined in \S2.2. We call $\mathfrak{f}:\Omega^r\to \mathbb{T}$ \textit{a $\mathbb{T}$-valued rigid analytic function} if its restriction to each $\Omega_n^r$ is the uniform limit of rational functions in $\mathbb{T}(z_1,\dots,z_{r-1})$ having no poles in $\Omega_n^r$. We denote by $\Hol(\Omega^r,\mathbb{T})$ the set of  $\mathbb{T}$-valued rigid analytic functions on $\Omega^r$. Furthermore, we denote by $\Hol(\Omega^r,\mathbb{T}^r)$ the set of vector valued functions $\mathcal{P}:\Omega^r\to \mathbb{T}^r$ so that their each entry is a $\mathbb{T}$-valued rigid analytic function.

 Throughout the present paper, when we consider a $\mathbb{T}$-valued (vector) function, we will also emphasize the variable $t$ in the notation and write $\mathfrak{f}(\cdot,t):\Omega^r\to \mathbb{T}^d$ for some $d\geq 1$. This is mainly due to the fact that at certain cases (see for example Theorem \ref{T:main} and proof of Lemma \ref{L:weaku}), we evaluate the variable $t$ at a certain point $\mathfrak{t}\in\mathbb{C}_{\infty}$ to obtain a $\mathbb{C}_{\infty}$-valued rigid analytic function $f(\zz,\mathfrak{t}):\Omega^{r}\to \mathbb{C}_{\infty}$.

\subsection{$\mathbb{T}$-valued Drinfeld modular forms}  Let $k\in \mathbb{Z}_{\geq 0}$ and $m\in \mathbb{Z}/(q-1)\mathbb{Z}$. For any $\gamma=(a_{ij})\in \GL_r(A)$, we consider the determinant representation $\det(\cdot)^{-m}:\GL_r(A)\to \mathbb{F}_q^{\times}$ sending each $\gamma \in \GL_r(A)$ to $\det(\gamma)^{-m}$. By a slight abuse of notation, for any $\gamma\in \GL_r(K_{\infty})$ and $\mathfrak{f}\in \Hol(\Omega^r,\TT)$, we define  the $k$-th slash operator $|_{k}$ and the $k$-th slash operator $|_{k,m}$ with respect to type $m\in \mathbb{Z}/(q-1)\mathbb{Z}$  by 
\[
(\mathfrak{f}|_{k}\gamma)(\zz,t):=j(\gamma;\zz)^{-k}\mathfrak{f}(\gamma\cdot \zz,t).
\]
and 
\[
(\mathfrak{f}|_{k,m}\gamma)(\zz,t):=j(\gamma;\zz)^{-k}\det(\gamma)^{m}\mathfrak{f}(\gamma\cdot \zz,t)
\]
respectively. 
In what follows, we define 
\[
|\zz|_{\text{im}}:=\mathrm{inf}\{\inorm{z_1-\mathfrak{a}}:\mathfrak{a}=a_2z_2+\dots+a_rz_r, \ \  a_2,\dots,a_r\in K_{\infty}\}
\]
and following the notation in \cite[\S4.2]{CG22}, for $\tz\in \Omega^{r-1}$, we set 
\[
\mathfrak{I}_{\tz}:=\{ \zz=(z_1,\dots,z_r)=(z_1,\tz)\in \Omega^{r} \ \ | \ \ \inorm{z_1}=|\zz|_{\text{im}} \}.
\]
Before we define our next object, note that for any choice of $\tz\in \Omega^{r-1}$, by  \cite[(4.21)]{CG22}, we have 
\begin{equation}\label{E:ulimit}
\lim_{\substack{\zz=(z_1,\tz)\in \mathfrak{I}_{\tz}\\ |\zz|_{\infty}\to\infty}}u(\zz)=0.
\end{equation}
\begin{definition}  We call a function $\mathfrak{f}\in \Hol(\Omega^r, \mathbb{T})$ \textit{a weak $\mathbb{T}$-valued Drinfeld modular form} \textit{of weight $k$} \textit{for $\det^{-m}$} if it satisfies the following conditions:
\begin{itemize}
    \item[(i)] For all $\gamma\in \GL_r(A)$, we have
    \[
    \mathfrak{f}|_{k,m}\gamma=\mathfrak{f}.
    \]
\item[(ii)] There exists a non-negative integer $n$ such that for any choice of $\tz\in \Omega^{r-1}$, we have 
\[
\lim_{\substack{\zz=(z_1,\tz)\in \mathfrak{I}_{\tz}\\ |\zz|_{\infty}\to\infty}}u(\zz)^n\mathfrak{f}(\zz,t)<\infty.
\]
\end{itemize}
We denote by $\mathbb{WM}_{k}^{m}$ the $\mathbb{T}$-module spanned by all the weak $\mathbb{T}$-valued  Drinfeld modular forms of weight $k$ for $\det^{-m}$.
\end{definition}

\begin{lemma}[{cf. \cite[Lem. 13]{Pel12}}] \label{L:weaku} Let $\mathcal{WM}_{k,\text{mer}}^m$ be the $\mathbb{C}_{\infty}$-vector space spanned by the weak Drinfeld modular forms of weight $k$ and type $m$ which are meromorphic at infinity. Then we have 
\[
\mathbb{WM}_{k}^m=\mathcal{WM}_{k,\text{mer}}^m\otimes \mathbb{T}.
\]
\end{lemma}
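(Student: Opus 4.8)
The plan is to prove the two inclusions separately; the inclusion $\supseteq$ is immediate and the inclusion $\subseteq$ carries all the content. For the easy direction, given $f\in\mathcal{WM}_{k,\text{mer}}^m$ and $g\in\mathbb{T}$, I would check that the $\mathbb{T}$-valued function $\zz\mapsto g\,f(\zz)$ satisfies (i) and (ii). The operator $|_{k,m}\gamma$ acts only through $j(\gamma;\zz)^{-k}\det(\gamma)^{m}f(\gamma\cdot\zz)$ and is $\mathbb{T}$-linear in the $t$-constant $g$, so (i) follows from $f|_{k,m}\gamma=f$; and if $u^{n}f$ is bounded near infinity then so is $u^{n}(gf)=g\,(u^{n}f)$ because $g\in\mathbb{T}$ has finite Gauss norm. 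Extending $\mathbb{T}$-linearly over finite sums yields $\mathcal{WM}_{k,\text{mer}}^m\otimes_{\mathbb{C}_\infty}\mathbb{T}\subseteq\mathbb{WM}_k^m$, where I identify the tensor product with its image under $f\otimes g\mapsto gf$ (this map is injective by the evaluation argument used below).

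For the reverse inclusion I would take $\mathfrak{f}\in\mathbb{WM}_k^m$ and expand $\mathfrak{f}(\zz,t)=\sum_{i\geq0}c_i(\zz)t^i$. Since extracting the coefficient of $t^i$ is a contraction for the Gauss norm, on each $\Omega_n^r$ the $c_i$ are uniform limits of rational functions with $\mathbb{C}_\infty$-coefficients, hence rigid analytic. Because $|_{k,m}\gamma$ touches only the variable $\zz$ and therefore commutes with taking the $t^i$-coefficient, condition (i) forces $c_i|_{k,m}\gamma=c_i$ for all $\gamma\in\GL_r(A)$, so each $c_i\in\mathcal{WM}_k^m$. Condition (ii) supplies a single $n$ with $u^n\mathfrak{f}$ bounded at infinity; since $|c_i(\zz)|\leq\|\mathfrak{f}(\zz,\cdot)\|$ and $|u(\zz)^n c_i(\zz)|\leq\|u(\zz)^n\mathfrak{f}(\zz,\cdot)\|$, this same $n$ bounds $u^n c_i$, so every $c_i$ is meromorphic at infinity with pole order at most $n$. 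Thus all the $c_i$ lie in the one subspace
\[
V:=\{c\in\mathcal{WM}_k^m \ : \ c \text{ is meromorphic at infinity with pole order}\leq n\}.
\]

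The crux is then to show $V$ is finite-dimensional over $\mathbb{C}_\infty$ and to reassemble $\mathfrak{f}$. For finite-dimensionality I would multiply by a power of the discriminant $g_r$: it is a nowhere-vanishing Drinfeld cusp form of weight $q^r-1$ and type $0$ whose $u$-expansion begins in positive $u$-degree, so the map $c\mapsto g_r^{n}c$ carries $V$ into the holomorphic space $\mathcal{M}_{k+n(q^r-1)}^m$, injectively because $g_r$ vanishes nowhere on $\Omega^r$. As $\dim_{\mathbb{C}_\infty}\mathcal{M}_{k+n(q^r-1)}^m<\infty$ by the arbitrary-rank theory of \cite{BBP18}, we get $\dim_{\mathbb{C}_\infty}V=:d<\infty$. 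Fixing a basis $e_1,\dots,e_d$ of $V$ and points $\zz_1,\dots,\zz_d\in\Omega^r$ with $\det(e_j(\zz_s))\neq0$ (possible by linear independence of the $e_j$), I write $c_i=\sum_j\lambda_{ij}e_j$ and recover the $\lambda_{ij}$ by applying the fixed inverse matrix $(e_j(\zz_s))^{-1}$ to the vector $(c_i(\zz_s))_s$. Since $\mathfrak{f}(\zz_s,\cdot)\in\mathbb{T}$ forces $c_i(\zz_s)\to0$ as $i\to\infty$ for each $s$, each $g_j:=\sum_i\lambda_{ij}t^i$ lies in $\mathbb{T}$, and therefore $\mathfrak{f}=\sum_{j=1}^d g_j e_j\in\mathcal{WM}_{k,\text{mer}}^m\otimes\mathbb{T}$.

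The main obstacle I anticipate is establishing the finite-dimensionality of $V$ cleanly: one must verify that multiplication by $g_r^{n}$ genuinely clears the pole at infinity (needing the precise $u$-order of $g_r$ and that the pole-order bound from (ii) controls the whole $u$-expansion, whose coefficients are themselves rank $r-1$ forms) and invoke the correct finiteness statement for $\mathcal{M}_{k}^m$. The remaining steps — rigid analyticity of the $c_i$, the commutation of $|_{k,m}\gamma$ with $t$-coefficient extraction, and the evaluation-point argument producing $g_j\in\mathbb{T}$ — should be routine once the uniform pole bound and the finiteness of $V$ are in hand.
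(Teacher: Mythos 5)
Your proposal is correct and follows essentially the same strategy as the paper's proof: both reduce to finite-dimensionality by multiplying by a power of a nowhere-vanishing cusp form to land in some $\mathcal{M}_{k'}^{m'}$, and both conclude via evaluation at finitely many points $\zz_1,\dots,\zz_d$ with an invertible matrix to show the resulting coefficient functions lie in $\mathbb{T}$. The only differences are cosmetic: you organize the argument around the $t$-Taylor coefficients $c_i(\zz)$ and clear the pole with $g_r^{\,n}$, whereas the paper works with the specializations $\mathfrak{f}(\cdot,\mathfrak{t})$ for $\mathfrak{t}$ in the closed unit disk and uses $h_r^{\,\mu}$.
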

\begin{proof} The proof proceeds as in the proof of \cite[Lem. 13]{Pel12}. For completeness, we provide its details. Since $\mathcal{WM}_{k,\text{mer}}^m\otimes \mathbb{T}\subseteq \mathbb{WM}_{k}^m$ is clear, we show the other direction. Let $\mathfrak{f}\in \mathbb{WM}_{k}^m$ and $\mathfrak{B}$ be the closed unit disk, that is the set of elements $\mathfrak{t}\in \mathbb{C}_{\infty}$ such that $|\mathfrak{t}|\leq 1$. Note that for any $\mathfrak{t}\in \mathfrak{B}$, the function $\mathfrak{f}(\cdot,\mathfrak{t})$ is an element in $\mathcal{WM}_{k,\text{mer}}^m$. Then we see that there exists a non-negative integer $\mu$ such that $h_r^{\mu}\mathfrak{f}(\cdot,\mathfrak{t})\in \mathcal{M}_{k+\mu((q^r-1)/(q-1))}^{m+\mu}$. Let $\{\mathfrak{b}_1,\dots,\mathfrak{b}_\nu\}$ be a $\mathbb{C}_{\infty}$-basis for $\mathcal{M}_{k+\mu((q^r-1)/(q-1))}^{m+\mu}$. Thus, there are functions $\mathfrak{c}_1,\dots,\mathfrak{c}_\nu:\mathfrak{B}\to \mathbb{C}_{\infty}$ such that for any $\zz\in \Omega^r$, we have 
\[
\mathfrak{f}(\zz,\mathfrak{t})=h_r(\zz)^{-\mu}(\mathfrak{c}_1(\mathfrak{t})\mathfrak{b}_1(\zz)+\cdots+\mathfrak{c}_{\mu}(\mathfrak{t})\mathfrak{b}_\nu(\zz)).
\]
Here, note that we are allowed to consider negative powers of $h_r$ as it is nowhere vanishing on $\Omega^r$. Now since $\mathfrak{b}_1,\dots,\mathfrak{b}_{\nu}$ are $\mathbb{C}_{\infty}$-linearly independent functions, there exist $\zz_1,\dots,\zz_\nu\in \Omega^r$ such that the matrix $D:=(h_r(\zz_i)^{-\mu}\mathfrak{b}_\ell(\zz_i))_{i\ell}\in \Mat_\nu(\mathbb{C}_{\infty})$ is invertible. Then we obtain 
\[
(\mathfrak{c}_1(\mathfrak{t}),\dots,\mathfrak{c}_\nu(\mathfrak{t}))=(\mathfrak{f}(\zz_1,\mathfrak{t}),\dots,\mathfrak{f}(\zz_\nu,\mathfrak{t}))D^{-1}.
\]
Since the above holds for any $\zz\in \Omega^r$ and $\mathfrak{f}(\cdot,t)\in \mathbb{T}$, we see that $\mathfrak{c}_1,\dots,\mathfrak{c}_\nu\in\mathbb{T}$ and hence $\mathfrak{f}\in \mathcal{WM}_{k,\text{mer}}^m\otimes \mathbb{T} $, finishing the proof of the lemma.
\end{proof}

 One can see that any element  $\mathfrak{f}\in\mathbb{WM}_{k}^{m}$ is \textit{$A$-invariant}, namely, recalling the matrix $\gamma_{a_1,\dots,a_{r-1}}\in \GL_r(A)$ defined in \eqref{E:ainv} where $(a_1,\dots,a_{r-1})\in A^{r-1}$, we have $
\mathfrak{f}(\gamma_{a_1,\dots,a_{r-1}}\cdot \zz,t)=\mathfrak{f}(\zz,t)
$
for any tuple $(a_1,\dots,a_{r-1})\in A^{r-1}$. Then, using Lemma \ref{L:weaku}, we see that  there exist a unique integer $n_0$ and a unique $\mathfrak{f}_n\in \Hol(\Omega^{r-1},\mathbb{T})$ for each $n\geq n_0$ such that 
\begin{equation}\label{E:uexp2}
\mathfrak{f}(\zz,t)=\sum_{n\geq n_0}\mathfrak{f}_n(\tz,t)u(\zz)^n
\end{equation}
whenever $\zz$ lies in some neighborhood of infinity. We call such an expansion \textit{the $u$-expansion of $\mathfrak{f}$}. We call an $A$-invariant function $\mathfrak{f}$ \textit{holomorphic at infinity} if $\mathfrak{f}_n$ is identically zero whenever $n<0$. For any $d\geq 1$, we further denote by $\Hol(\Omega^{r-1},\mathbb{T})\dbl u\dbr^{d}$ the set of vector valued functions $\mathcal{P}:\Omega^r\to \mathbb{T}^d$ so that their each entry admits a $u$-expansion as in \eqref{E:uexp2} with no polar part.

\begin{definition}  We call a weak $\mathbb{T}$-valued Drinfeld modular form $f$ \textit{a $\mathbb{T}$-valued Drinfeld modular form}  \textit{of weight $k$} \textit{for $\det^{-m}$} if $\mathfrak{f}$ is holomorphic at infinity. We denote by $\mathbb{M}_{k}^{m}$ the $\mathbb{T}$-module spanned by all the $\mathbb{T}$-valued Drinfeld modular forms of weight $k$ for $\det^{-m}$.
\end{definition}

The next corollary is a simple consequence of Lemma \ref{L:weaku}.
\begin{corollary}\label{C:tvalmodform} We have 
\[
\mathbb{M}_{k}^m=\mathcal{M}_{k}^m\otimes \mathbb{T}.
\]
\end{corollary}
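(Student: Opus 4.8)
The plan is to deduce this corollary from Lemma \ref{L:weaku} by tracking the holomorphy-at-infinity condition through the tensor decomposition of weak forms. The inclusion $\mathcal{M}_{k}^m\otimes \mathbb{T}\subseteq \mathbb{M}_{k}^m$ is immediate: if $f\in \mathcal{M}_{k}^m$ with $u$-expansion $f=\sum_{n\geq 0}f_n u^n$ and $g\in \mathbb{T}$, then $f\otimes g$ has $u$-expansion $\sum_{n\geq 0}f_n(\tz)g(t)u^n$, which has no polar part, so $f\otimes g\in \mathbb{M}_{k}^m$, and this extends $\mathbb{T}$-linearly. The content is the reverse inclusion, and here the first step is to write a general element of $\mathbb{M}_{k}^m$ as a weak form via Lemma \ref{L:weaku} and then argue that the holomorphy forces each factor to be holomorphic.

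Concretely, for the hard direction I would take $\mathfrak{f}\in \mathbb{M}_{k}^m\subseteq \mathbb{WM}_{k}^m$ and invoke Lemma \ref{L:weaku} to write $\mathfrak{f}=\sum_{i=1}^{s}f_i\otimes g_i$ with $f_i\in \mathcal{WM}_{k,\text{mer}}^m$ and $g_i\in \mathbb{T}$. Using the standard normal form of a tensor over the field $\mathbb{C}_{\infty}$, I may arrange that both the families $\{f_i\}_{i=1}^{s}$ and $\{g_i\}_{i=1}^{s}$ are $\mathbb{C}_{\infty}$-linearly independent. Expanding each $f_i=\sum_{n}(f_i)_n u^n$ at infinity and collecting terms, the uniqueness of the $u$-expansion \eqref{E:uexp2} gives $\mathfrak{f}_n(\tz,t)=\sum_{i=1}^{s}(f_i)_n(\tz)g_i(t)$ for every $n$. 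Since $\mathfrak{f}\in \mathbb{M}_{k}^m$ is holomorphic at infinity, $\mathfrak{f}_n\equiv 0$ for all $n<0$, so for every such $n$ and every $\tz\in \Omega^{r-1}$ we obtain the relation $\sum_{i=1}^{s}(f_i)_n(\tz)g_i(t)=0$ in $\mathbb{T}$. As each coefficient $(f_i)_n(\tz)$ lies in $\mathbb{C}_{\infty}$ and the $g_i$ are $\mathbb{C}_{\infty}$-linearly independent in $\mathbb{T}$, this forces $(f_i)_n(\tz)=0$ for every $i$; letting $\tz$ range over $\Omega^{r-1}$ yields $(f_i)_n\equiv 0$ for all $i$ and all $n<0$. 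Hence each $f_i$ is holomorphic at infinity, i.e.\ $f_i\in \mathcal{M}_{k}^m$, and therefore $\mathfrak{f}=\sum_i f_i\otimes g_i\in \mathcal{M}_{k}^m\otimes \mathbb{T}$.

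The only delicate point is the reduction to simultaneously linearly independent families $\{f_i\}$ and $\{g_i\}$, which makes the linear-independence step a finite, and hence well-posed, statement even though $\mathcal{WM}_{k,\text{mer}}^m$ is infinite-dimensional over $\mathbb{C}_{\infty}$; everything else is the formal matching of $u$-expansion coefficients and the scalar linear-independence argument. I expect no genuine obstacle beyond this bookkeeping, since the substantive analytic input—that a meromorphic weak form with a $\mathbb{T}$-parameter decomposes as a finite tensor—has already been supplied by Lemma \ref{L:weaku}.
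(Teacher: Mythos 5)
Your proof is correct and follows exactly the route the paper intends: the paper states the corollary as a ``simple consequence of Lemma~\ref{L:weaku}'' without further detail, and your argument---decomposing $\mathfrak{f}$ via that lemma into a tensor with both families $\{f_i\}$, $\{g_i\}$ linearly independent, then using uniqueness of the $u$-expansion and the $\mathbb{C}_{\infty}$-linear independence of the $g_i$ to kill the polar coefficients of each $f_i$---is the natural fleshing-out of that claim. No gaps.
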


\subsection{The function $\mathcal{H}_r(\cdot,t)$}

For each $\zz\in \Omega^r$, $k=q^j\in \mathbb{Z}_{\geq 1}$ with $j\geq 0$ and $1\leq i \leq r$, recall $\mathcal{E}_k^{[i]}(\zz,t)$ from \S1 and let 
\[
\mathcal{E}(\zz,t):=\begin{pmatrix}
\mathcal{E}^{[1]}_{1}(\zz,t) & \cdots &  \mathcal{E}^{[r]}_{1}(\zz,t)\\
\vdots & & \vdots \\
\mathcal{E}^{[1]}_{q^{r-1}}(\zz,t) & \cdots &  \mathcal{E}^{[r]}_{q^{r-1}}(\zz,t)
\end{pmatrix}\in \Mat_{r}(\mathbb{T}).
\]
Let $\zz\in \Omega^r$. Recall the Drinfeld module $\phi^{\zz}$ associated to the $A$-lattice $A^r\zz$ from Example \ref{Ex:1}. Following the notation in \cite[\S5.2]{CG22}, we let $s_i(\zz,t):=s_{\phi^{\zz}}(z_i,t)
$
and consider the matrix
\[
\mathcal{F}(\zz,t):=\begin{pmatrix}
s_1(\zz,t)&\dots &s_1^{(r-1)}(\zz,t)\\
\vdots & & \vdots \\
s_r(\zz,t)&\dots &s_r^{(r-1)}(\zz,t)
\end{pmatrix}\in \Mat_{r}(\TT)
\]
where $s_i^{(j)}(\zz,t)$ is $s_i(\zz,t)$ under the $j$-th twisting operator defined in \eqref{E:twisting}. By \cite[Prop. 3.4]{CG21}, we know that 
\begin{equation}\label{E:det1}
\det(\mathcal{F}(\zz,t))=\frac{\tilde{\pi}^{\frac{1-q^r}{q-1}}\omega(t)}{h_r(\zz)}\in \mathbb{T}^{\times}.
\end{equation}
Hence $\mathcal{F}(\zz,t)\in \GL_r(\mathbb{T})$. Now, for each $\ell\geq 0$, recall the exponential and logarithm coefficients $\alpha_\ell(\zz)$ and $\beta_\ell(\zz)$ respectively corresponding to the Drinfeld module $\phi^{\zz}$ from Example \ref{Ex:1}. Then we define 
	\[
	c_{q^\ell-1}(\zz,t):=\frac{\beta_\ell(\zz)}{\theta-t}+\frac{\alpha_1(\zz)\beta_{\ell-1}(\zz)^q}{\theta^q-t}+\dots+ \frac{\alpha_{\ell-1}(\zz)\beta_{1}(\zz)^{q^{\ell-1}}}{\theta^{q^{\ell-1}}-t}+\frac{\alpha_\ell(\zz)}{\theta^{q^\ell}-t}\in \mathbb{T}.
	\]
In \cite[Lem. 5.16]{CG22}, Chen and the first author showed that 
\begin{equation}\label{E:mainid}
\mathcal{E}(\zz,t)\mathcal{F}(\zz,t)=-\begin{pmatrix}
	c_0(\zz,t)&  & &\\
	c_{q-1}(\zz,t)&c_0(\zz,t)^{(1)} & & \\
	\vdots& &\ddots & \\
	c_{q^{r-1}-1}(\zz,t)&c_{q^{r-2}-1}(\zz,t)^{(1)} & \dots & c_0(\zz,t)^{(r-1)}
	\end{pmatrix}.
    \end{equation}
   Since $c_0(\zz,t)=1/(\theta-t)$, using \eqref{E:omegafunceq}, \eqref{E:det1} and \eqref{E:mainid}, we see that 
\begin{equation}\label{E:det2}
    \det(\mathcal{E}(\zz,t))=\frac{\tilde{\pi}^{\frac{q^r-1}{q-1}}h_r(\zz)}{(t-\theta)\cdots (t-\theta^{q^{r-1}})\omega(t)}=\frac{\tilde{\pi}^{\frac{q^r-1}{q-1}}h_r(\zz)}{\omega(t)^{(r)}}.
\end{equation}

In what follows, we consider the function $\mathcal{H}_r(\cdot,t):\Omega^r\to \mathbb{T}$ given by 
\[
\mathcal{H}_r(\zz,t):=\det \begin{pmatrix}
    \mathcal{E}^{[1]}_{1}(\zz,t) & \cdots &  \mathcal{E}^{[r-1]}_{1}(\zz,t)\\
\vdots & & \vdots \\
\mathcal{E}^{[1]}_{q^{r-2}}(\zz,t) & \cdots &  \mathcal{E}^{[r-1]}_{q^{r-2}}(\zz,t)
\end{pmatrix}
\]
which is the $(r,r)$-cofactor of $\mathcal{E}(\zz,t)$. Our first main result may be seen as a generalization of \cite[Thm. 3.19]{PP18}. 
\begin{theorem}\label{T:funcH} The following statements hold.
\begin{itemize}
\item[(i)] We have 
\[
\mathcal{H}_r(\zz,t)=\frac{\tilde{\pi}^{\frac{q^r-1}{q-1}}h_r(\zz)}{(t-\theta)\cdots (t-\theta^{q^{r-2}})\omega(t)}s_r^{(r-1)}(\zz,t)=\frac{\tilde{\pi}^{\frac{q^r-1}{q-1}}h_r(\zz)}{\omega(t)^{(r-1)}}s_r^{(r-1)}(\zz,t).
\]
Moreover, for any fixed $\zz\in \Omega^r$, $\mathcal{H}_r(\zz,\cdot)$ may be extended to an entire function of the variable $t$.
\item[(ii)] Recall the elements $D_\ell\in A$ defined in Example \ref{Ex:C} for each $\ell\in \mathbb{Z}_{\geq 0}$. For any $n\geq r-1$, we have
\[
\mathcal{H}_r(\zz,\theta^{q^{n}})=\tilde{\pi}^{\frac{q^r-1}{q-1}-q^{n}}D_{n-(r-1)}^{q^{r-1}}h_r(\zz)\alpha_{n-(r-1)}(\zz)^{q^{r-1}}.
\]
In particular, each $\mathcal{H}_r(\zz,\theta^{q^n})\in \mathcal{M}_{\frac{q^{r-1}-1}{q-1}+q^n}^{1}$ and 
\[
\mathcal{H}_r(\zz,\theta^{q^{r-1}})=\tilde{\pi}^{\frac{q^{r-1}-1}{q-1}}h_r(\zz).
\]
Furthermore, each $ \mathcal{H}_r(\zz,\theta^{q^n}) $ is a Drinfeld cusp form.
\end{itemize}
\end{theorem}
\begin{proof} By  \eqref{E:mainid}, we have 
\begin{multline}\label{E:mainid2}
\det(\mathcal{E}(\zz,t))\mathcal{F}(\zz,t)\\=-\det(\mathcal{E}(\zz,t))\mathcal{E}(\zz,t)^{-1}\begin{pmatrix}
	c_0(\zz,t)&  & &\\
	c_{q-1}(\zz,t)&c_0(\zz,t)^{(1)} & & \\
	\vdots& &\ddots & \\
	c_{q^{r-1}-1}(\zz,t)&c_{q^{r-2}-1}(\zz,t)^{(1)} & \dots & c_0(\zz,t)^{(r-1)}
	\end{pmatrix}.
\end{multline}
Since $\mathcal{H}_r(\zz,t)$ is the $(r,r)$-cofactor of $\mathcal{E}(\zz,t)$, comparing the $(r,r)$-th entry of both sides of \eqref{E:mainid2} and using \eqref{E:det2}, we have 
\begin{multline}\label{E:hfuncobtain}
\frac{\tilde{\pi}^{\frac{q^r-1}{q-1}}h_r(\zz)}{\omega(t)^{(r)}}s_r^{(r-1)}(\zz,t)=\frac{\tilde{\pi}^{\frac{q^r-1}{q-1}}h_r(\zz)}{(t-\theta)\cdots (t-\theta^{q^{r-1}})\omega(t)}s_r^{(r-1)}(\zz,t)\\
=\frac{\tilde{\pi}^{\frac{q^r-1}{q-1}}h_r(\zz)}{(t-\theta^{q^{r-1}})\omega(t)^{(r-1)}}s_r^{(r-1)}(\zz,t)
=-\mathcal{H}_{r}(\zz,t)c_0^{(r-1)}(\zz,t)=\frac{\mathcal{H}_r(\zz,t)}{t-\theta^{q^{r-1}}}.
\end{multline}
Thus, by \eqref{E:hfuncobtain}, we obtain the first assertion. The second part of the first assertion and the first assertion of part (ii) follow from the fact that $s_r^{(r-1)}(\zz,t)$ for any fixed $\zz\in \Omega^r$ and $\omega^{(r-1)}(t)$ have simple poles at $t=\theta^{q^n}$ for $n\geq r-1$ with residues $-\alpha_{n-(r-1)}(\zz)^{q^{r-1}}$ and $-\tilde{\pi}^{q^{n}}D_{n-(r-1)}^{-q^{r-1}}$ respectively and no other poles (Proposition \ref{P:and}). The second assertion of part (ii) is a simple consequence of the first assertion of part (ii) and finally, since each $ \mathcal{H}_r(\zz,\theta^{q^n}) $ has non-zero type, the last assertion follows from \cite[Thm. 17.6]{BBP18}.
\end{proof}

Let $\gamma=(a_{ij})_{i,j}\in \GL_r(A)$.  By the proof of \cite[Thm. 5.5]{CG21}, for $1\leq i \leq r$, we have 
\begin{equation}\label{E:andgenfunceq}
s_i(\gamma\cdot \zz,t)=j(\gamma;\zz)^{-1}(a_{i1}(t)s_1(\zz,t)+\cdots+a_{ir}(t)s_r(\zz,t)).
\end{equation}
Furthermore, let $a_1,\dots,a_{r-1}\in A$ and recall the matrix $\gamma_{a_1,\dots,a_{r-1}}\in \GL_r(A)$ defined in \eqref{E:ainv}. Then, by \eqref{E:andgenfunceq}, for any $\zz\in \Omega^r$, we see that 
\[
s_r^{(r-1)}(\gamma_{a_1,\dots,a_{r-1}}\cdot \zz,t)=s_r^{(r-1)}(\zz,t).
\]
Thus, Theorem \ref{T:funcH}(i) implies
\[
\mathcal{H}_r(\gamma_{a_1,\dots,a_{r-1}}\cdot \zz,t)=\mathcal{H}_r(\zz,t)
\]
and hence, $\mathcal{H}_r(\cdot,t)$ is an $A$-invariant $\mathbb{T}$-valued rigid analytic function. Thus, it has a $u$-expansion given by
\[
\mathcal{H}_r(\cdot,t)=\sum_{n\in \mathbb{Z}}\mathfrak{f}_n(\cdot,t)u^n
\]
where, for each $n\in \mathbb{Z}$, $\mathfrak{f}_n:\Omega^{r-1}\to \mathbb{T}$ is a uniquely defined rigid analytic function. In what follows, we take a closer look at this $u$-expansion. Let
\[
h_r=\sum_{\ell=1}^{\infty}h_{\ell,r}u^{\ell}
\]
be the $u$-expansion of $h_r$ and for each $n\geq 0$ consider the $u$-expansion of $\alpha_{n}$ given by
\[
\alpha_n=\sum_{i=0}^{\infty}\alpha_{n,i}u^i.
\]
Then by Proposition \ref{P:and}(i) and Theorem \ref{T:funcH}(i), we see that the $u$-expansion of $\mathcal{H}_r(\cdot,t)$ can be explicitly given by 
\[
\mathcal{H}_r(\zz,t)=\frac{\tilde{\pi}^{\frac{q^r-1}{q-1}}h_r(\zz)}{\omega(t)^{(r-1)}}s_r^{(r-1)}(\zz,t)=\sum_{m =1}^{\infty}\left(\frac{\tilde{\pi}^{\frac{q^r-1}{q-1}}}{\omega(t)^{(r-1)}}\sum_{\ell+jq^{r-1}=m}h_{\ell,r}(\tz)\sum_{n= 0}^{\infty}\frac{\alpha_{n,j}(\tz)^{q^{r-1}}}{\theta^{q^{n+r-1}}-t}\right)u(\zz)^m
\]
whenever $\zz=(z_1,\zz)\in \Omega^r$ is sufficiently large. Thus, the $u$-expansion of $\mathcal{H}_r(\cdot,t)$ neither has polar part nor constant term and hence is an element in  $u\Hol(\Omega^{r-1},\mathbb{T})\dbl u\dbr$. In our next lemma, we will explicitly describe the first coefficient of the $u$-expansion of $\mathcal{H}_r(\cdot,t)$.
\begin{lemma}\label{L:uexp}
Let $\mathcal{H}_r(\cdot,t)=\sum_{n=1}^{\infty}\mathfrak{f}_n(\cdot,t)u^n$ be the $u$-expansion of $\mathcal{H}_r(\cdot,t)$. Then 
\[
\mathfrak{f}_1(\cdot,t)=(-1)^r\tilde{\pi}\mathcal{H}_{r-1}^{(1)}(\cdot,t).
\]
\end{lemma}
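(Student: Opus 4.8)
The plan is to read off the first two coefficients of the $u$-expansion of $\mathcal{H}_r(\cdot,t)$ directly from the factorization of Theorem \ref{P:funcH}(i),
\[
\mathcal{H}_r(\zz,t)=\frac{\tilde{\pi}^{\frac{q^r-1}{q-1}}}{\omega(t)^{(r-1)}}\,h_r(\zz)\,s_r^{(r-1)}(\zz,t),
\]
and then to recognize the resulting $u^1$-coefficient as the first twist of the same factorization applied one rank lower. The vanishing $\mathfrak{f}_0\equiv 0$ is already in hand: it is exactly the computation made just before the statement, where Lemma \ref{L:Cor516} together with the cuspidality of $h_r$ gives $\lim_{|\zz|_\infty\to\infty}h_r(\zz)s_r^{(r-1)}(\zz,t)=0$ along $\mathfrak{I}_{\tz}$, so that the constant term of the $u$-expansion of $\mathcal{H}_r(\cdot,t)$ is identically zero.

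For $\mathfrak{f}_1$ I would multiply the $u$-expansions of the two $\zz$-dependent factors $h_r(\zz)$ and $s_r^{(r-1)}(\zz,t)$; the scalar $\tilde{\pi}^{\frac{q^r-1}{q-1}}/\omega(t)^{(r-1)}$ is independent of $\zz$ and merely rescales every coefficient. On the one hand, \eqref{E:huexp} gives $h_r(\zz)=(-1)^r h_{r-1}(\tz)^q u+O(u^q)$, so the $u$-expansion of $h_r$ begins in degree one with leading coefficient $(-1)^r h_{r-1}(\tz)^q$. On the other hand, $s_r^{(r-1)}(\cdot,t)$ is $A$-invariant, since each $\gamma_{a_1,\dots,a_{r-1}}$ fixes the lattice $A^r\zz$ and the entry $z_r=1$; hence it admits a $u$-expansion whose constant term is, by \cite[(5.17)]{CG22} taken at general $t$,
\[
\lim_{\substack{\zz=(z_1,\tz)\in \mathfrak{I}_{\tz}\\ |\zz|_{\infty}\to\infty}} s_r^{(r-1)}(\zz,t)=s_{\phi^{\tz}}^{(r-1)}(1,t),
\]
where $\phi^{\tz}$ is the rank $(r-1)$ Drinfeld module attached to $A^{r-1}\tz$. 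Since $h_r$ has no constant term, extracting the coefficient of $u^1$ in the product then yields
\[
\mathfrak{f}_1(\tz,t)=\frac{\tilde{\pi}^{\frac{q^r-1}{q-1}}}{\omega(t)^{(r-1)}}(-1)^r h_{r-1}(\tz)^q\, s_{\phi^{\tz}}^{(r-1)}(1,t).
\]

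It remains to identify this with $(-1)^r\tilde{\pi}\,\mathcal{H}_{r-1}^{(1)}(\tz,t)$. Applying Theorem \ref{P:funcH}(i) in rank $r-1$ gives $\mathcal{H}_{r-1}(\tz,t)=\tilde{\pi}^{\frac{q^{r-1}-1}{q-1}}h_{r-1}(\tz)\,s_{\phi^{\tz}}^{(r-2)}(1,t)/\omega(t)^{(r-2)}$; twisting once and using $(\omega(t)^{(r-2)})^{(1)}=\omega(t)^{(r-1)}$ and $(s_{\phi^{\tz}}^{(r-2)}(1,t))^{(1)}=s_{\phi^{\tz}}^{(r-1)}(1,t)$ produces
\[
\mathcal{H}_{r-1}^{(1)}(\tz,t)=\frac{\tilde{\pi}^{\frac{q^r-q}{q-1}}h_{r-1}(\tz)^q}{\omega(t)^{(r-1)}}\,s_{\phi^{\tz}}^{(r-1)}(1,t).
\]
Comparing the two displays and using the identity $\frac{q^r-1}{q-1}=1+\frac{q^r-q}{q-1}$ then finishes the proof, as the common factor $\frac{1}{\omega(t)^{(r-1)}}h_{r-1}(\tz)^q s_{\phi^{\tz}}^{(r-1)}(1,t)$ cancels on both sides. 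The point that will require the most care is the convergence statement for $s_r^{(r-1)}$ at arbitrary $t$: Lemma \ref{L:Cor516} records it only at $t=\theta$, so I would invoke \cite[(5.17)]{CG22} in its unspecialized form, and I would check that multiplying the two $u$-expansions is legitimate despite the poles of $s_r^{(r-1)}$ in $t$. These poles are harmless because they are cancelled by the zeros of $1/\omega(t)^{(r-1)}$, so each coefficient $\mathfrak{f}_n$ is in fact entire in $t$, in agreement with the entirety already established in Theorem \ref{P:funcH}(i).
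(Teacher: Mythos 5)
Your proof is correct and follows essentially the same route as the paper's: both start from the factorization in Theorem~\ref{P:funcH}(i), use \eqref{E:huexp} for the leading term of $h_r$, and identify the resulting $u^1$-coefficient with the once-twisted rank-$(r-1)$ factorization. The only cosmetic difference is that the paper justifies the ``constant term'' of $s_r^{(r-1)}(\zz,t)$ by expanding it as $\sum_{i\geq 0}\alpha_i(\zz)^{q^{r-1}}/(\theta^{q^{i+r-1}}-t)$ and applying \cite[Prop.~15.3]{BBP18} termwise to the coefficient forms $\alpha_i$, rather than invoking the limit formula \cite[(5.17)]{CG22} for the whole Anderson generating function as you do.
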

\begin{proof} Let $\zz=(z_1,\zz)\in \Omega^r$. By \cite[Prop. 15.3]{BBP18}, for each $n\geq 0$, we have $\alpha_{n,0}(\tz)=\alpha_{n}(\tz)$. Here, by $\alpha_n(\tz)$, we mean the $n$-th coefficient of the exponential series of the Drinfeld module $\phi^{\tz}$ corresponding to the $A$-lattice $\Lambda_{\tz}$. Moreover, by \eqref{E:huexp}, we have $h_{1,r}(\tz)=(-1)^r h_{r-1}(\tz)^q$. Thus, we obtain    
    \begin{multline*}
\mathfrak{f}_1(\tz,t)=\frac{\tilde{\pi}^{\frac{q^r-1}{q-1}}h_{1,r}(\tz)}{\omega(t)^{(r-1)}}\sum_{n= 0}^{\infty}\frac{\alpha_{n,0}(\tz)^{q^{r-1}}}{\theta^{q^{n+r-1}}-t}
=\frac{\tilde{\pi}^{\frac{q^r-1}{q-1}}(-1)^r h_{r-1}(\tz)^q}{\omega(t)^{(r-1)}}\sum_{n=0}^{\infty}\frac{\alpha_n(\tz)^{q^{r-1}}}{\theta^{q^{n+r-1}}-t}\\
=(-1)^r\tilde{\pi}\left(\frac{\tilde{\pi}^{\frac{q^{r-1}-1}{q-1}}h_{r-1}(\tz)}{\omega(t)^{(r-2)}}\sum_{n=0}^{\infty}\frac{\alpha_n(\tz)^{q^{r-2}}}{\theta^{q^{n+r-2}}-t}\right)^{(1)}
=(-1)^r\tilde{\pi}\left(\frac{\tilde{\pi}^{\frac{q^{r-1}-1}{q-1}}h_{r-1}(\tz)}{\omega(t)^{(r-2)}}s_{r-1}^{(r-2)}(\tz,t)\right)^{(1)}\\=
(-1)^r\tilde{\pi}\mathcal{H}_{r-1}^{(1)}(\tz,t)
    \end{multline*}
where the fourth equality follows from Proposition \ref{P:and}(i) and the last equality follows from Theorem \ref{T:funcH}(i). Thus, it finishes the proof of the lemma.
\end{proof}

We finish this subsection with a crucial analysis on Anderson generating functions which will be useful throughout the paper.
\begin{lemma}\label{L:Cor516} Let $0\leq i\leq r-1$,  $1\leq j \leq r$ and $\tilde{g}_{r-1}:\Omega^{r-1}\to \mathbb{C}_{\infty}$ be the discriminant function of rank $r-1$. For each $\tz\in \Omega^{r-1}$, we have
\[
\lim_{\substack{\zz=(z_1,\tz)\in \mathfrak{I}_{\tz}\\ |\zz|_{\infty}\to\infty}}u(\zz)s_j^{(i)}(\zz,t)=\begin{cases}
\tilde{\pi}^{-1}\tilde{g}_{r-1}(\tz)^{-1} &\text{ if } i=r-1 \text{ and } j=1\\
0 &  \text{otherwise.}\\
\end{cases}
\] 
\end{lemma}
\begin{proof} Recall that
\[
s_j^{(i)}(\zz,t)=\sum_{n=0}^{\infty}\exp_{\phi^{\zz}}\left(\frac{z_j}{\theta^{n+1}}\right)^{q^i}t^n.
\]
By \cite[Rem. 3.22]{CG21}, note that for each $n\geq 0$,  we have
\begin{equation}\label{E:limit11}
\lim_{\substack{\zz=(z_1,\tz)\in \mathfrak{I}_{\tz}\\ |\zz|_{\infty}\to\infty}}\exp_{\phi^{\zz}}\left(\frac{z_j}{\theta^{n+1}}\right)^{q^i}=\lim_{\substack{\zz=(z_1,\tz)\in \mathfrak{I}_{\tz}\\ |\zz|_{\infty}\to\infty}}\exp_{\phi^{\tz}}\left(\frac{z_j}{\theta^{n+1}}\right)^{q^i}.
\end{equation}
Then \eqref{E:limit11} implies
\begin{equation}\label{E:limitgen}
\lim_{\substack{\zz=(z_1,\tz)\in \mathfrak{I}_{\tz}\\ |\zz|_{\infty}\to\infty}}u(\zz)\exp_{\phi^{\zz}}\left(\frac{z_j}{\theta^{n+1}}\right)^{q^i}=\lim_{\substack{\zz=(z_1,\tz)\in \mathfrak{I}_{\tz}\\ |\zz|_{\infty}\to\infty}}\frac{\tilde{\pi}^{-1}\exp_{\phi^{\zz}}\left(\frac{z_j}{\theta^{n+1}}\right)^{q^i}}{\exp_{\phi^{\tz}}(z_1)}=\lim_{\substack{\zz=(z_1,\tz)\in \mathfrak{I}_{\tz}\\ |\zz|_{\infty}\to\infty}}\frac{\tilde{\pi}^{-1}\exp_{\phi^{\tz}}\left(\frac{z_j}{\theta^{n+1}}\right)^{q^i}}{\exp_{\phi^{\tz}}(z_1)}.
\end{equation}
Let $2\leq j \leq r$. Since, in this case, $\exp_{\phi^{\tz}}\left(\frac{z_j}{\theta^{n+1}}\right)$ is independent of $z_1$, by \eqref{E:ulimit} and \eqref{E:limit11}, we easily see that
\begin{equation}\label{E:limit12}
\lim_{\substack{\zz=(z_1,\tz)\in \mathfrak{I}_{\tz}\\ |\zz|_{\infty}\to\infty}}u(\zz)\exp_{\phi^{\zz}}\left(\frac{z_j}{\theta^{n+1}}\right)^{q^i}=0
\end{equation}
for any $n\geq 0$. Hence we obtain
\begin{multline}\label{E:con1}
\lim_{\substack{\zz=(z_1,\tz)\in \mathfrak{I}_{\tz}\\ |\zz|_{\infty}\to\infty}}u(\zz)s_j^{(i)}(\zz,t)=\lim_{\substack{\zz=(z_1,\tz)\in \mathfrak{I}_{\tz}\\ |\zz|_{\infty}\to\infty}}\sum_{n=0}^{\infty}u(\zz)\exp_{\phi^{\zz}}\left(\frac{z_j}{\theta^{n+1}}\right)^{q^i}t^n\\
=\sum_{n=0}^{\infty}
\left(\lim_{\substack{\zz=(z_1,\tz)\in \mathfrak{I}_{\tz}\\ |\zz|_{\infty}\to\infty}}u(\zz)\exp_{\phi^{\zz}}\left(\frac{z_j}{\theta^{n+1}}\right)^{q^i}\right)t^n=0.
\end{multline}
Now we let $j=1$. First, we observe that $(z_1,\dots,z_r)\in \mathfrak{I}_{\tz}$
 if and only if $(z_1/\theta^{n+1},z_2,\dots,z_r)\in \mathfrak{I}_{\tz}$. Therefore, again by \cite[Rem. 3.22]{CG21}, we have
 \[
 \left|\exp_{\phi^{\tz}}\left(\frac{z_1}{\theta^{n+1}}\right)\right|=\left|\frac{z_1}{\theta^{n+1}}\right|\prod_{\substack{\lambda \in \Lambda_{\tz}\\ 0<\lambda<z_1/\theta^{n+1}}}\frac{|z_1|}{|\theta^{n+1}\lambda|}.
 \]
 In other words, 
\begin{equation}\label{E:onecasenorm}
\left|\exp_{\phi^{\tz}}\left(\frac{z_1}{\theta^{n+1}}\right)\right|\to \infty \text{ as } |z_1|\to \infty.
\end{equation}
By \eqref{E:funceqtn}, we further note that there exist $\mathfrak{y}_1,\dots,\mathfrak{y}_{(n+1)(r-1)}\in \mathbb{C}_{\infty}$ with $\mathfrak{y}_{(n+1)(r-1)}\in \mathbb{C}_{\infty}^{\times}$ which are all independent of $z_1$ such that 
 \begin{multline}\label{E:onecase}
u(\zz)\exp_{\phi^{\tz}}\left(\frac{z_1}{\theta^{n+1}}\right)^{q^i}=\frac{\tilde{\pi}^{-1}\exp_{\phi^{\tz}}\left(\frac{z_1}{\theta^{n+1}}\right)^{q^i}}{\exp_{\phi^{\tz}}(z_1)}=\frac{\tilde{\pi}^{-1}\exp_{\phi^{\tz}}\left(\frac{z_1}{\theta^{n+1}}\right)^{q^i}}{\phi^{\tz}_{\theta^{n+1}}\left(\exp_{\phi^{\tz}}\left(\frac{z_1}{\theta^{n+1}}\right)\right)}\\=\frac{\tilde{\pi}^{-1}\exp_{\phi^{\tz}}\left(\frac{z_1}{\theta^{n+1}}\right)^{q^i}}{\theta^{n+1}\exp_{\phi^{\tz}}\left(\frac{z_1}{\theta^{n+1}}\right)+\mathfrak{y}_1\exp_{\phi^{\tz}}\left(\frac{z_1}{\theta^{n+1}}\right)^q+\cdots+\mathfrak{y}_{(n+1)(r-1)}\exp_{\phi^{\tz}}\left(\frac{z_1}{\theta^{n+1}}\right)^{q^{(n+1)(r-1)}}}.
 \end{multline}
 Note that $\mathfrak{y}_{(n+1)(r-1)}=\tilde{g}_{r-1}(\tz)$ if $n=0$. Then \eqref{E:limitgen}, \eqref{E:onecasenorm} and \eqref{E:onecase} imply that 
\begin{multline}\label{E:limit14}
\lim_{\substack{\zz=(z_1,\tz)\in \mathfrak{I}_{\tz}\\ |\zz|_{\infty}\to\infty}}u(\zz)\exp_{\phi^{\zz}}\left(\frac{z_1}{\theta^{n+1}}\right)^{q^i}=\lim_{\substack{\zz=(z_1,\tz)\in \mathfrak{I}_{\tz}\\ |\zz|_{\infty}\to\infty}}u(\zz)\exp_{\phi^{\tz}}\left(\frac{z_1}{\theta^{n+1}}\right)^{q^i}\\
=\begin{cases}
\tilde{\pi}^{-1}\tilde{g}_{r-1}(\tz)^{-1} &\text{ if } i=r-1 \text{ and } n=0\\
0 &  \text{otherwise.}\\
\end{cases}
\end{multline}
Finally, by \eqref{E:limit14}, we obtain 
\begin{multline}\label{E:con2}
\lim_{\substack{\zz=(z_1,\tz)\in \mathfrak{I}_{\tz}\\ |\zz|_{\infty}\to\infty}}u(\zz)s_1^{(i)}(\zz,t)=\lim_{\substack{\zz=(z_1,\tz)\in \mathfrak{I}_{\tz}\\ |\zz|_{\infty}\to\infty}}\sum_{n=0}^{\infty}u(\zz)\exp_{\phi^{\zz}}\left(\frac{z_1}{\theta^{n+1}}\right)^{q^i}t^n\\
=\sum_{n=0}^{\infty}
\left(\lim_{\substack{\zz=(z_1,\tz)\in \mathfrak{I}_{\tz}\\ |\zz|_{\infty}\to\infty}}u(\zz)\exp_{\phi^{\zz}}\left(\frac{z_1}{\theta^{n+1}}\right)^{q^i}\right)t^n=\begin{cases}
\tilde{\pi}^{-1}\tilde{g}_{r-1}(\tz)^{-1} &\text{ if } i=r-1\\
0 &  \text{otherwise.}\\
\end{cases}
\end{multline}
Thus, \eqref{E:con1} and \eqref{E:con2} finish the proof of the lemma.
\end{proof}

\subsection{Vectorial Drinfeld modular forms for $\rho_m$} Let $m\in \mathbb{Z}/(q-1)\mathbb{Z}$,  $m\neq 0$ for $q>2$ and $m=1$ for $q=2$. For any $\gamma=(a_{ij})\in \GL_r(A)$, we set $\overline{\gamma}:=(a_{ij}(t))\in \GL_r(\mathbb{F}_q[t])$. We now consider the map $\rho_m:\GL_r(A)\to \GL_r(\mathbb{F}_q[t])$ given by 
\[
\rho_m(\gamma):=\det(\overline{\gamma})^{-m}\overline{\gamma}.
\]
For simplicity, we let $\rho:=\rho_1$. 
\begin{definition} Let $k$ be an integer. We call a function $\mathcal{P}=(\mathcal{P}_1,\dots,\mathcal{P}_r)^{\tr}\in \Hol(\Omega^r, \mathbb{T}^r)$  \textit{a weak vectorial Drinfeld modular form} (weak VDMF) \textit{of weight $k$} \textit{with respect to $\rho_m$} if it satisfies the following conditions: 
\begin{itemize}
    \item[(i)] For all $\gamma\in \GL_r(A)$, we have
    \[
    \mathcal{P}(\gamma\cdot \zz)=j(\gamma;\zz)^k\rho_m(\gamma)\mathcal{P}(\zz).
    \]
\item[(ii)] For each $1\leq i \leq r$, there exists a non-negative integer $n_i$ such that for any choice of $\tz\in \Omega^{r-1}$, we have 
\[
\lim_{\substack{\zz=(z_1,\tz)\in \mathfrak{I}_{\tz}\\ |\zz|_{\infty}\to\infty}}u(\zz)^{n_i}\mathcal{P}_i(\zz,t)<\infty.
\]    
\end{itemize}
We denote by $\mathbb{WM}_{k}(\rho_m)$ the $\mathbb{T}$-module of weak VDMFs of weight $k$ with respect to $\rho_m$.    \end{definition}

Next we introduce vectorial Drinfeld modular forms.
\begin{definition}\label{D:vdmf} We call a weak VDMF  $\mathcal{P}\in \Hol(\Omega^r, \mathbb{T}^r)$ \textit{a vectorial Drinfeld modular form} (VDMF) \textit{of weight $k$} \textit{with respect to $\rho_m$} if for any $\tz\in \Omega^{r-1}$, letting $\Upsilon:=\begin{pmatrix}
    u &  & & \\
     & 1 & & \\
     & & \ddots &  \\ 
     & & & 1
\end{pmatrix}$,  we have 
\[
(\Upsilon \mathcal{P})(\zz)\to \begin{pmatrix}
    0\\
    \vdots\\
    0
\end{pmatrix}
\]
as $|\zz|_{\infty}\to \infty$ where $\zz=(z_1,\tz) \in \mathfrak{I}_{\tz}$. We denote by $\mathbb{M}_{k}(\rho_m)$ the $\mathbb{T}$-module of VDMFs of weight $k$ with respect to $\rho_m$.
    \end{definition}

In what follows, we introduce some examples of VDMFs. For each $1\leq i\leq r$, we  consider the function $\mathcal{G}_i(\cdot,t):\Omega^r\to \mathbb{T}^r$ given by 
\[
\mathcal{G}_i(\zz,t):=\frac{\tilde{\pi}^{\frac{q^r-1}{q-1}}h_r(\zz)}{(t-\theta)\cdots (t-\theta^{q^{r-2}})\omega(t)}\begin{pmatrix}
    s_1^{(i-1)}(\zz,t)\\
    \vdots\\
    s_r^{(i-1)}(\zz,t)
\end{pmatrix}=\frac{\tilde{\pi}^{\frac{q^r-1}{q-1}}h_r(\zz)}{\omega(t)^{(r-1)}}\begin{pmatrix}
    s_1^{(i-1)}(\zz,t)\\
    \vdots\\
    s_r^{(i-1)}(\zz,t)
\end{pmatrix}.
\]

\begin{lemma}\label{L:funcg} We have $\mathcal{G}_i\in \Hol(\Omega^r,\mathbb{T}^r)$. Moreover, for any fixed $\zz\in \Omega^r$  and $1\leq i \leq r-1$, each entry of $\mathcal{G}_i(\zz,\cdot)$ may be extended to a meromorphic function of $t$ with simple poles at $t=\theta^{q^{i-1}},\dots,\theta^{q^{r-2}}$. Moreover, for any fixed $\zz\in \Omega^r$, each entry of $\mathcal{G}_{r}(\zz,t)$ may be extended to an entire function of $t$. Furthermore, the last entry of $\mathcal{G}_r(\cdot,t)$ is equal to $\mathcal{H}_r(\cdot,t)$.
\end{lemma}
\begin{proof}
Let $1\leq j \leq r$. Since, by Proposition \ref{P:and}(i), for a fixed $\mathfrak{t}$ in the algebraic closure of $\mathbb{F}_q$, $s_j(\zz,\mathfrak{t})=s_{\phi^{\zz}}(z_j,\mathfrak{t})$ is a holomorphic function, the first assertion follows. By Proposition \ref{P:and}(ii), we note that  $s_j^{(i-1)}(\zz,t)$ has simple poles at $t=\theta^{q^n}$ for $n\geq i-1$ and no other poles. On the other hand, by the product expansion of $\omega(t)$ given in \eqref{D:omega}, we see that  $(\omega(t)^{(r-1)})^{-1}=(t-\theta)^{-1}\cdots (t-\theta^{q^{r-2}})^{-1}\omega(t)^{-1}$ has zeros at $t=\theta^{q^m}$ for $m\geq r-1$. Hence, for $1\leq i \leq r-1$, $\mathcal{G}_i(\zz,\cdot)$ has a pole at $t=\theta^{q^n}$ for each $i-1\leq n\leq r-2$ and no other poles. The same discussion also implies that each entry of $\mathcal{G}_r(\zz,t)$ is an entire function of $t$. Finally, the last assertion immediately follows from Theorem \ref{T:funcH}(i).
\end{proof}

\begin{proposition} We have $\mathcal{G}_i(\cdot,t)\in \mathbb{M}_{\frac{q^r-1}{q-1}-q^{i-1}}(\rho)$. 
\end{proposition}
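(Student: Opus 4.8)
The plan is to verify directly the two defining conditions of a VDMF for $\mathcal{G}_i$: the covariance law with respect to $\rho$ of weight $\frac{q^r-1}{q-1}-q^{i-1}$, and the vanishing condition $(\Upsilon\mathcal{G}_i)(\zz)\to 0$ at infinity. The heart of the matter is to compute how the column vector $\bS(\zz,t):=(s_1(\zz,t),\dots,s_r(\zz,t))^{\tr}$ of Anderson generating functions transforms under $\gamma=(a_{ij})\in \GL_r(A)$. Writing $\gamma\zz=(w_1,\dots,w_r)^{\tr}$ with $w_i=\sum_j a_{ij}z_j$ and $w_r=j(\gamma;\zz)$, the key observation is that $A^r(\gamma\cdot\zz)=\tfrac{1}{w_r}A^r\zz$, since $\gamma\in\GL_r(A)$ preserves the lattice $\sum_l A z_l$. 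Using $\exp_{c\Lambda}(y)=c\exp_\Lambda(y/c)$, I would first establish the scalar identity $s_j(\gamma\cdot\zz,t)=w_r^{-1}s_{\phi^{\zz}}(w_j,t)$.

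Next I would record the identity $s_{\phi^{\zz}}(az,t)=\sum_k \phi^{\zz}_{a,k}\,s_{\phi^{\zz}}^{(k)}(z,t)$ for $a\in A$, which follows from $\exp_{\phi^\zz}(ay)=\phi^\zz_a(\exp_{\phi^\zz}(y))$ and the definition \eqref{E:twisting} of the twist. Since each generator $z_l$ lies in $\Ker(\exp_{\phi^{\zz}})$, Proposition \ref{P:and}(iii) applied to $z_l$ gives $\sum_{k=0}^r\phi^\zz_{\theta,k}s_l^{(k)}(\zz,t)=t\,s_l(\zz,t)$; because $t^n$ is fixed by twisting and $\phi^\zz$ is an $\FF_q$-algebra map, induction then yields $\sum_k\phi^\zz_{a,k}s_l^{(k)}(\zz,t)=a(t)\,s_l(\zz,t)$, where $a(t)$ denotes $a$ with $\theta\mapsto t$. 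Combining this with the additivity of $s_{\phi^\zz}(\cdot,t)$ in its first argument and $w_j=\sum_l a_{jl}z_l$ gives $\bS(\gamma\cdot\zz,t)=w_r^{-1}\overline{\gamma}\,\bS(\zz,t)$, where $\overline{\gamma}=(a_{ij}(t))$.

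I would then apply the $(i-1)$-st twist: as $\overline{\gamma}$ has entries in $\FF_q[t]$ fixed by twisting and $w_r\in\CC_\infty$ is $t$-free, one gets $\bS^{(i-1)}(\gamma\cdot\zz,t)=w_r^{-q^{i-1}}\overline{\gamma}\,\bS^{(i-1)}(\zz,t)$. Multiplying by the $\zz$-independent factor $\tilde{\pi}^{\frac{q^r-1}{q-1}}h_r/\omega(t)^{(r-1)}$ and using the modularity $h_r(\gamma\cdot\zz)=j(\gamma;\zz)^{\frac{q^r-1}{q-1}}\det(\gamma)^{-1}h_r(\zz)$ (Example \ref{Ex:1}(iii)), together with $\det(\gamma)\in\FF_q^\times$ (so $\det\overline{\gamma}=\det\gamma$ and $\rho(\gamma)=\det(\gamma)^{-1}\overline{\gamma}$), I obtain
\[
\mathcal{G}_i(\gamma\cdot\zz,t)=j(\gamma;\zz)^{\frac{q^r-1}{q-1}-q^{i-1}}\rho(\gamma)\,\mathcal{G}_i(\zz,t),
\]
which is the weak VDMF law. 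For the condition at infinity, fix $\tz$ and let $\zz=(z_1,\tz)\in\mathfrak{I}_{\tz}$ with $|\zz|_\infty\to\infty$. For the entries $j\ge 2$ I would use that $s_j^{(i-1)}(\zz,t)$ tends to the finite limit $s_{\phi^{\tz}}^{(i-1)}(z_j,t)$ (as in the proof of Lemma \ref{L:Cor516}) while $h_r(\zz)\to 0$, so each such entry vanishes; for the first entry, multiplied by $u$ in $\Upsilon$, I would combine $h_r=(-1)^r h_{r-1}^q u+O(u^q)$ from \eqref{E:huexp} with the boundedness of $u(\zz)\,s_1^{(i-1)}(\zz,t)$ furnished by Lemma \ref{L:Cor516}, so that $u\cdot\tfrac{\tilde{\pi}^{\frac{q^r-1}{q-1}}h_r}{\omega(t)^{(r-1)}}s_1^{(i-1)}\sim(\mathrm{const})\,u\,(u\,s_1^{(i-1)})\to 0$.

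The hard part will be the infinity condition, specifically ensuring that these limits hold as statements in $\TT^r$ (uniformly for $t$ in the closed unit disk) rather than merely after the specialization $t=\theta$ recorded in Lemma \ref{L:Cor516}; this needs the full asymptotic control of the twisted Anderson generating functions at the cusp from the analysis in \cite{CG22}. The other delicate point, which is the conceptual crux producing the representation $\rho$, is the clean derivation of the scalar identity $\sum_k\phi^\zz_{a,k}s_l^{(k)}=a(t)s_l$, where the period property of $z_l$ (via Proposition \ref{P:and}(iii)) is exactly what converts the operator $\phi_\theta$ into multiplication by $t$.
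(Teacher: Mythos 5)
Your proposal is correct and follows essentially the same route as the paper: establish the transformation law $s_j(\gamma\cdot\zz,t)=j(\gamma;\zz)^{-1}\sum_l a_{jl}|_{\theta=t}\,s_l(\zz,t)$ (which the paper simply cites from the proof of Theorem 5.5 of \cite{CG21}, whereas you rederive it from the lattice rescaling and the period identity of Proposition \ref{P:and}(iii)), apply the $(i-1)$-st twist, invoke the modularity of $h_r$, and then use Lemma \ref{L:Cor516} together with the cuspidality of $h_r$ for the condition at infinity. The caveat you raise about the limits holding in $\mathbb{T}$ rather than only at $t=\theta$ applies equally to the paper's own one-line use of Lemma \ref{L:Cor516} and is resolved by the unspecialized estimates from \cite{CG22} exactly as you indicate.
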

\begin{proof} Let $\gamma=(a_{ij})\in \GL_r(A)$. Since $h_r\in \mathcal{M}_{(q^r-1)/(q-1)}^{1}$, applying the twisting operator defined in \eqref{E:twisting}, \eqref{E:andgenfunceq} implies
    \[
    \mathcal{G}_i(\gamma\cdot \zz)=j(\gamma;\zz)^{\frac{q^r-1}{q-1}-q^{i-1}}\rho(\gamma)\mathcal{G}_i(\zz).
    \]
    On the other hand, since $h_r$ is a Drinfeld cusp form, by using \eqref{E:ulimit} and Lemma \ref{L:Cor516}, we see that  for any choice of $\tz\in \Omega^{r-1}$,
$
(\Upsilon \mathcal{G}_i)(\zz)\to (0,\dots,0)^{\tr}$
 as $|\zz|_{\infty}\to \infty$ where $\zz=(z_1,\tz) \in \mathfrak{I}_{\tz}$ and hence $\mathcal{G}_i(\cdot,t)\in \mathbb{M}_{\frac{q^r-1}{q-1}-q^{i-1}}(\rho)$ as desired.
\end{proof}

Next, generalizing \cite[Thm. 4.22(1,2)]{PP18}, we obtain an equivalent condition for a weak VDMF to be a VDMF. We let $X_1,\dots,X_r$ be indeterminates over $\mathbb{T}$. Then for any $z\in \mathbb{C}_{\infty}$, we set
\[
\mathcal{D}(X_1,\dots,X_r,z):=\begin{pmatrix}
X_1&X_2&\cdots & X_r\\
s_{\phi^{\tz}}(z,t)  & s_2(\tz,t) & \cdots & s_r(\tz,t)\\
\vdots & \vdots & &  \vdots\\
s_{\phi^{\tz}}^{(r-2)}(z,t)& s_2^{(r-2)}(\tz,t) & \cdots  & s_r^{(r-2)}(\tz,t)
\end{pmatrix}\in \Mat_{r}(\mathbb{T}[X_1,\dots,X_r]).
\]

For each $1\leq \mu \leq r-1$, we define the function $\chi_{\mu,\tz}(\cdot,t):\Omega\to \mathbb{T}$ by 
\[
\chi_{\mu,\tz}(z,t):=(-1)^{\mu-1}\frac{\tilde{\pi}^{\frac{q^{r-1}-1}{q-1}} h_{r-1}(\tz)}{\omega(t)}\times \text{$(1,\mu+1)$-minor of $\mathcal{D}(X_1,\dots,X_r,z)$}.
\]
\begin{remark} Let $\tilde{C}$ be the Drinfeld module of rank one corresponding to the $A$-lattice generated by $1$. We emphasize that when $r=2$, the function $\chi_{1,\tz}(\cdot,t)=\tilde{\pi}s_{\tilde{C}}(\cdot,t)/\omega(t)$, after a certain normalization, was already studied in \cite[\S2.3]{PP18}.
\end{remark}

In our next lemma, we state an important growth property of $\chi_{\mu,\tz}$.
\begin{lemma}\label{L:growth} For any choice of $\tz \in \Omega^{r-1}$ and $1\leq \mu \leq r-1$, we have 
\[
 \lim_{\substack{\zz=(z_1,\tz)\in \mathfrak{I}_{\tz}\\ |\zz|_{\infty}\to\infty}} u(\zz)\chi_{\mu,\tz}(z_1,t)=0.
\]

\end{lemma}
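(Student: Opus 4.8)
The plan is to reduce the statement to a size estimate for the Anderson generating functions $s_{\phi^{\tz}}^{(j)}(z_1,t)$, and then to settle that estimate by comparing the sizes of $\exp_{\phi^{\tz}}(z_1)$ and $\exp_{\phi^{\tz}}(z_1/\theta)$ via $\theta$-multiplication.

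First I would expand the $(1,\mu+1)$-minor of $\mathcal{D}(X_1,\dots,X_r,z_1)$ along its first column. For $1\le \mu\le r-1$ this minor is the determinant of the $(r-1)\times(r-1)$ matrix obtained from $\mathcal{D}$ by deleting the first row and the column attached to $z_{\mu+1}$; its first column is $\bigl(s_{\phi^{\tz}}^{(j)}(z_1,t)\bigr)_{j=0}^{r-2}$, while every other column depends only on $\tz$ and $t$. Cofactor expansion along that first column therefore gives
\[
\chi_{\mu,\tz}(z_1,t)=\frac{(-1)^{\mu-1}\tilde{\pi}^{\frac{q^{r-1}-1}{q-1}}h_{r-1}(\tz)}{\omega(t)}\sum_{j=0}^{r-2}(-1)^{j}\,M_j(\tz,t)\,s_{\phi^{\tz}}^{(j)}(z_1,t),
\]
where each minor $M_j(\tz,t)\in\mathbb{T}$ is independent of $z_1$. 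Since $1/\omega(t)$, $h_{r-1}(\tz)$ and the $M_j(\tz,t)$ are fixed as $z_1$ varies, it suffices to show that $u(\zz)\,s_{\phi^{\tz}}^{(j)}(z_1,t)\to 0$ in $\mathbb{T}$ for each $0\le j\le r-2$.

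For the growth estimate I would use $u(\zz)=\tilde{\pi}^{-1}\exp_{\phi^{\tz}}(z_1)^{-1}$ and set $A:=|\exp_{\phi^{\tz}}(z_1)|$ and $B:=|\exp_{\phi^{\tz}}(z_1/\theta)|$. Writing $s_{\phi^{\tz}}(z_1,t)=\sum_{i\ge0}\exp_{\phi^{\tz}}(z_1/\theta^{i+1})t^i$ and using that $w\mapsto|\exp_{\phi^{\tz}}(w)|$ is non-decreasing for $w$ far from $\Lambda_{\tz}$ (which holds along $\mathfrak{I}_{\tz}$, since each $z_1/\theta^{i+1}$ again realizes its distance to the $K_\infty$-span of $z_2,\dots,z_r$), the Gauss norm of each twist obeys $\bigl\|s_{\phi^{\tz}}^{(j)}(z_1,\cdot)\bigr\|\le B^{q^j}$. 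On the other hand the identity $\exp_{\phi^{\tz}}(z_1)=\phi^{\tz}_{\theta}\!\left(\exp_{\phi^{\tz}}(z_1/\theta)\right)$, together with $B\to\infty$ so that the top $\tau$-coefficient $\tilde{g}_{r-1}(\tz)$ dominates, yields $A=|\tilde{g}_{r-1}(\tz)|\,B^{q^{r-1}}$ for $|\zz|_{\infty}$ large. Combining these,
\[
\bigl|u(\zz)\bigr|\,\bigl\|s_{\phi^{\tz}}^{(j)}(z_1,\cdot)\bigr\|\le |\tilde{\pi}|^{-1}|\tilde{g}_{r-1}(\tz)|^{-1}\,B^{\,q^{j}-q^{r-1}},
\]
and since $q^{j}-q^{r-1}<0$ for $j\le r-2$ while $B\to\infty$, the right-hand side tends to $0$. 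Feeding this back into the cofactor expansion gives $u(\zz)\chi_{\mu,\tz}(z_1)\to 0$.

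The only genuine difficulty is the growth estimate. A naive bound on a single twist $s_{\phi^{\tz}}^{(j)}(z_1,t)$ against $u(\zz)$ is delicate because twisting raises the Gauss norm to the $q^{j}$-th power, and for $j\ge 1$ this could a priori outgrow $\exp_{\phi^{\tz}}(z_1)^{-1}$; what rescues the argument is the exact comparison $A=|\tilde{g}_{r-1}(\tz)|B^{q^{r-1}}$ coming from $\theta$-multiplication, which pins the competing exponent at exactly $q^{r-1}$ and hence beats every $q^{j}$ with $j\le r-2$. This is also consistent with Lemma \ref{L:Cor516}: the borderline twist $j=r-1$, which is precisely the one absent from $\chi_{\mu,\tz}$, is exactly the twist for which the analogous limit is nonzero.
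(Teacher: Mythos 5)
Your proof is correct, and its first step---expanding the $(1,\mu+1)$-minor along the column $\bigl(s_{\phi^{\tz}}^{(j)}(z_1,t)\bigr)_{j=0}^{r-2}$ so that everything reduces to showing $u(\zz)\,s_{\phi^{\tz}}^{(j)}(z_1,t)\to 0$ for $0\le j\le r-2$, the remaining entries being independent of $z_1$---is exactly the reduction the paper makes. Where you genuinely diverge is in how that key limit is established: the paper simply imports it from \cite[Prop.~5.10]{CG22}, whereas you prove it from scratch by comparing $|\exp_{\phi^{\tz}}(z_1)|$ with $|\exp_{\phi^{\tz}}(z_1/\theta)|$ through the functional equation $\exp_{\phi^{\tz}}(z_1)=\phi^{\tz}_{\theta}\bigl(\exp_{\phi^{\tz}}(z_1/\theta)\bigr)$, so that the exponent $q^{r-1}$ contributed by the leading coefficient $\tilde{g}_{r-1}(\tz)$ beats every $q^{j}$ with $j\le r-2$. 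This makes the argument self-contained (it is, in effect, a proof of the relevant part of the cited proposition), at the cost of having to justify the monotonicity of $|\exp_{\phi^{\tz}}|$ along $\mathfrak{I}_{\tz}$; your parenthetical observation that each $z_1/\theta^{i+1}$ again realizes its distance to the $K_\infty$-span of $z_2,\dots,z_r$ is the right reason this works, since for such $w$ one has $|\exp_{\Lambda_{\tz}}(w)|=|w|\prod_{0<|\lambda|\le |w|}|w|/|\lambda|$, which is non-decreasing in $|w|$, so the Gauss norm of $s_{\phi^{\tz}}(z_1,\cdot)$ is attained at the $i=0$ term. Your closing consistency check against Lemma~\ref{L:Cor516}---that the excluded twist $j=r-1$ is precisely the borderline case with a nonzero limit---matches the paper's Lemma~\ref{L:Cor516} as well.
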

\begin{proof} By \eqref{E:limit14}, for $0\leq i \leq r-2$, we have
    \begin{equation}\label{E:lim1}
 \lim_{\substack{\zz=(z_1,\tz)\in \mathfrak{I}_{\tz}\\ |\zz|_{\infty}\to\infty}} u(\zz)s_{\phi^{\tz}}^{(i)}(z_1,t)=0
\end{equation}
and, since, for $2\leq j \leq r-1$, $s_{\phi^{\tz}}(z_j,t)$ does not depend on $z_1$, by \eqref{E:ulimit}, we obtain
 \begin{equation}\label{E:lim3}
 \lim_{\substack{\zz=(z_1,\tz)\in \mathfrak{I}_{\tz}\\ |\zz|_{\infty}\to\infty}} u(\zz)s_{\phi^{\tz}}^{(i)}(z_j,t)= \lim_{\substack{\zz=(z_1,\tz)\in \mathfrak{I}_{\tz}\\ |\zz|_{\infty}\to\infty}} u(\zz)s_j^{(i)}(\tz,t)=0.
\end{equation}
Thus, the lemma follows from \eqref{E:lim1}, \eqref{E:lim3} and the definition of $\chi_{\mu,\tz}$.
\end{proof}

Let $z\in \mathbb{C}_{\infty}$. We now define the matrix $\Theta_{t,\tz}(z,t)$ by 
\[
\Theta_{t,\tz}(z,t):=\begin{pmatrix}
    1 & \chi_{1,\tz}(z,t) & \cdots & \chi_{r-1,\tz}(z,t)\\
     & \ddots & & \\
     & & \ddots & \\
     & & & 1
\end{pmatrix}.
\]
\begin{lemma}\label{P:funceq}  Let $\tz\in \Omega^{r-1}$ and $z\in \mathbb{C}_{\infty}$. For any $a_1,\dots,a_{r-1}\in A$ and each $1\leq \mu \leq r$, we have 
\[
\chi_{\mu,\tz}(z+a_1z_2+\dots+a_{r-1}z_r,t)=\chi_{\mu,\tz}(z,t)+a_{\mu}(t).
\]
    In particular, 
    \[
    \Theta_{t,\tz}(a_1z_2+\dots+a_{r-1}z_r,t)=\begin{pmatrix}
    1 & a_{1}(t) & \cdots & a_{r-1}(t)\\
     & \ddots & & \\
     & & \ddots & \\
     & & & 1
\end{pmatrix}.
\]
    \end{lemma}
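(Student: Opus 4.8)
The plan is to reduce the identity to two elementary facts about the Anderson generating functions attached to the rank-$(r-1)$ Drinfeld module $\phi^{\tz}$, followed by a multilinearity argument that collapses the minor defining $\chi_{\mu,\tz}$. Write $\lambda := a_1 z_2 + \dots + a_{r-1}z_r$, which lies in $\Lambda_{\tz}=\Ker(\exp_{\phi^{\tz}})$. First I would record the translation law: since $\exp_{\phi^{\tz}}$ is $\FF_q$-additive, $s_{\phi^{\tz}}(z+\lambda,t)=s_{\phi^{\tz}}(z,t)+s_{\phi^{\tz}}(\lambda,t)$, and for a period $\lambda$ and $a\in A$ one has the scaling rule $s_{\phi^{\tz}}(a\lambda,t)=a|_{\theta=t}\,s_{\phi^{\tz}}(\lambda,t)$. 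This scaling rule is the conceptual crux producing the $|_{\theta=t}$: from the series $s_{\phi^{\tz}}(z,t)=\sum_{i\ge 0}\exp_{\phi^{\tz}}(z/\theta^{i+1})t^i$ and $a=\theta^n$, the terms with $i<n$ involve $\exp_{\phi^{\tz}}$ of $\theta^{\ge 0}\lambda\in\Lambda_{\tz}$ and hence vanish, while the surviving terms reindex to $t^n s_{\phi^{\tz}}(\lambda,t)$; $\FF_q$-linearity then gives the general $a$. Applying the twisting operators of \eqref{E:twisting} (which fix $\FF_q[t]$) and combining additivity with scaling, I obtain, for every $i$,
\[
s_{\phi^{\tz}}^{(i)}(\lambda,t)=\sum_{j=2}^{r}a_{j-1}|_{\theta=t}\,s_j^{(i)}(\tz,t).
\]

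Next I would exploit that the $(1,\mu+1)$-minor of $\mathcal{D}(X_1,\dots,X_r,z)$ is the determinant of the $(r-1)\times(r-1)$ matrix whose columns are the point $z$ and the periods $z_j$ for $j\in\{2,\dots,r\}\setminus\{\mu+1\}$, with rows the twists $s^{(0)},\dots,s^{(r-2)}$; crucially only the first ($z$-)column depends on $z$. By the displayed identity, passing from $z$ to $z+\lambda$ shifts this column by $\sum_{j=2}^{r}a_{j-1}|_{\theta=t}$ times the $z_j$-column. Expanding by multilinearity in the first column, every summand with $j\neq\mu+1$ has a repeated column and vanishes, leaving only the $j=\mu+1$ term: coefficient $a_\mu|_{\theta=t}$ and the column $z_{\mu+1}$ placed in the first slot. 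Reordering the columns $z_{\mu+1},z_2,\dots,\widehat{z_{\mu+1}},\dots,z_r$ back to natural order is a $\mu$-cycle contributing a sign $(-1)^{\mu-1}$, and identifies the surviving determinant with that of the matrix $\big(s_j^{(i)}(\tz,t)\big)_{2\le j\le r,\,0\le i\le r-2}$, the rank-$(r-1)$ analogue of $\mathcal{F}$ in \eqref{D:F}.

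To assemble, I would invoke \eqref{E:det1} in rank $r-1$ to evaluate that determinant as $\omega(t)\big/\big(\tilde{\pi}^{\frac{q^{r-1}-1}{q-1}}h_{r-1}(\tz)\big)$. Substituting into the definition of $\chi_{\mu,\tz}$, the prefactor $(-1)^{\mu-1}\tilde{\pi}^{\frac{q^{r-1}-1}{q-1}}h_{r-1}(\tz)/\omega(t)$ cancels the determinant's value, and the two factors of $(-1)^{\mu-1}$ multiply to $+1$, giving $\chi_{\mu,\tz}(z+\lambda,t)-\chi_{\mu,\tz}(z,t)=a_\mu|_{\theta=t}$, which is the first assertion. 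For the in-particular statement, note $s_{\phi^{\tz}}(0,t)=0$ makes the first column of the minor vanish, so $\chi_{\mu,\tz}(0,t)=0$; taking $z=0$ then yields $\chi_{\mu,\tz}(\lambda,t)=a_\mu|_{\theta=t}$, which is precisely the first row of $\Theta_{t,\tz}(\lambda,t)$, the remaining entries being unchanged.

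The steps are each short; I expect the only real friction to be the bookkeeping in the second paragraph—correctly identifying the collapsed minor with the rank-$(r-1)$ version of $\mathcal{F}$ and tracking the two independent sign contributions (the $(-1)^{\mu-1}$ in the definition of $\chi_{\mu,\tz}$ and the $(-1)^{\mu-1}$ from the column reordering) so that they cancel to give exactly $a_\mu|_{\theta=t}$ rather than its negative.
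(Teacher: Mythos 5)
Your proposal is correct and follows essentially the same route as the paper: additivity of $s_{\phi^{\tz}}$ combined with the scaling rule $s_{\phi^{\tz}}(a\lambda,t)=a|_{\theta=t}\,s_{\phi^{\tz}}(\lambda,t)$ for periods $\lambda\in\Lambda_{\tz}$, then multilinearity of the $(1,\mu+1)$-minor in its first column (with the repeated-column terms vanishing and the column reordering contributing $(-1)^{\mu-1}$), and finally the rank-$(r-1)$ determinant evaluation from \cite[Prop. 3.4]{CG21} to cancel the prefactor. Your write-up is in fact slightly more explicit than the paper's, which asserts the scaling rule without the $a=\theta^n$ reindexing argument you supply.
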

\begin{proof}
Observe that
\begin{multline*}
s_{\phi^{\tz}}(z+a_1z_2+\dots+a_{r-1}z_r,t)=\sum_{i=0}^{\infty}\exp_{\phi^{\tz}}\left(\frac{z+a_1z_2+\dots+a_{r-1}z_r}{\theta^{i+1}}\right)t^i\\
=\sum_{i=0}^{\infty}\exp_{\phi^{\tz}}\left(\frac{z}{\theta^{i+1}}\right)t^i+\sum_{i=0}^{\infty}\exp_{\phi^{\tz}}\left(\frac{a_1z_2}{\theta^{i+1}}\right)t^i+\cdots+\sum_{i=0}^{\infty}\exp_{\phi^{\tz}}\left(\frac{a_{r-1}z_r}{\theta^{i+1}}\right)t^i\\
=s_{\phi^{\tz}}(z,t)+a_1(t)s_2(\tz,t)+\dots+a_{r-1}(t)s_r(\tz,t).
\end{multline*}
Thus, the lemma follows from the definition of $\chi_m(\cdot,t)$, sum, switching columns and the scalar multiple property of determinants as well as the fact that 
\[
\det \begin{pmatrix} s_2(\tz,t) & \cdots & s_r(\tz,t)\\
 \vdots &  &   \vdots\\
 s_2^{(r-2)}(\tz,t) & \cdots  & s_r^{(r-2)}(\tz,t)
\end{pmatrix}=\tilde{\pi}^{\frac{1-q^{r-1}}{q-1}}h_{r-1}(\tz)^{-1}\omega(t)
\]
which follows from \eqref{E:det1}. The second assertion immediately follows from the first assertion.
\end{proof}
For any $\mathcal{P}\in \Hol(\Omega^{r-1},\mathbb{T}^r)$ and $1\leq i \leq r-1$, we define the rigid analytic function $\chi_{i,\tz}\mathcal{P}(\cdot, t):\Omega^{r}\to \mathbb{T}^r$ by 
\[
\chi_{i,\tz}\mathcal{P}(\zz,t):=\chi_{i,\tz}(z_1,t)\mathcal{P}(\zz,t), \ \ \zz=(z_1,\tz)\in \Omega^r.
\]
Moreover, by a slight abuse of notation, we define
\[
(\Theta_{t,\tz}^{-1}\mathcal{P})(\cdot,t):=\begin{pmatrix}\mathcal{P}_1(\cdot,t)-\chi_{1,\tz}\mathcal{P}_2(\cdot,t)-\dots-\chi_{r-1,\tz}\mathcal{P}_{r}(\cdot,t)\\
\mathcal{P}_2(\cdot,t)\\
\vdots\\
\mathcal{P}_r(\cdot,t)
\end{pmatrix}.
\]
We now provide an equivalent condition for weak VDMFs to be VDMFs. 

\begin{proposition}[{cf. \cite[Thm. 4.22]{PP18}}] \label{P:equiv} An element $\mathcal{P}\in \mathbb{WM}_k(\rho_m)$ is in $\mathbb{M}_k(\rho_m)$ if and only if 
\[
\Upsilon (\Theta_{t,\tz}^{-1}\mathcal{P})(\cdot,t)\in u\Hol(\Omega^{r-1},\mathbb{T})\dbl u\dbr^{r}
\]
for any choice of $\tz\in\Omega^{r-1}$.
\end{proposition}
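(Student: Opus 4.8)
The plan is to prove both implications at once by introducing the auxiliary function $\mathcal{Q}:=\Theta_{t,\tz}^{-1}\mathcal{P}$ and showing that it is \emph{$A$-invariant}; this reduces the proposition to a direct comparison of $u$-expansions. The conceptual input, generalizing \cite[Thm.~4.22]{PP18}, is that the matrices governing $A$-invariance coincide with the trivializing cocycle $\Theta_{t,\tz}$: for $\gamma=\gamma_{a_1,\dots,a_{r-1}}$ one has $\det(\gamma)=1$, hence $\rho_m(\gamma)=\overline{\gamma}$, and by the second assertion of Lemma \ref{P:funceq} this equals $\Theta_{t,\tz}(a_1z_2+\dots+a_{r-1}z_r,t)$.

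First I would record the group law for the matrices $\Theta_{t,\tz}(z,t)$. Since each of them differs from the identity only in its first row, the product $\Theta_{t,\tz}(x,t)\Theta_{t,\tz}(y,t)$ again has this shape, with first row the sum of the two first rows; together with the additivity $\chi_{\mu,\tz}(z_1+a_1z_2+\dots+a_{r-1}z_r,t)=\chi_{\mu,\tz}(z_1,t)+a_\mu|_{\theta=t}$ from Lemma \ref{P:funceq}, this gives
\[
\Theta_{t,\tz}(z_1+a_1z_2+\dots+a_{r-1}z_r,t)=\Theta_{t,\tz}(z_1,t)\,\Theta_{t,\tz}(a_1z_2+\dots+a_{r-1}z_r,t),
\]
and all these matrices commute. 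Now $\gamma=\gamma_{a_1,\dots,a_{r-1}}$ fixes $\tz$, sends $z_1$ to $w_1:=z_1+a_1z_2+\dots+a_{r-1}z_r$, and satisfies $j(\gamma;\zz)=z_r=1$. Substituting the functional equation $\mathcal{P}(\gamma\cdot\zz)=\rho_m(\gamma)\mathcal{P}(\zz)=\Theta_{t,\tz}(a_1z_2+\dots+a_{r-1}z_r,t)\mathcal{P}(\zz)$ into the definition of $\mathcal{Q}$ and using the displayed identity, the two factors $\Theta_{t,\tz}(a_1z_2+\dots+a_{r-1}z_r,t)^{\pm1}$ cancel after commuting, yielding $\mathcal{Q}(\gamma\cdot\zz)=\Theta_{t,\tz}(z_1,t)^{-1}\mathcal{P}(\zz)=\mathcal{Q}(\zz)$. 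Thus each entry of $\mathcal{Q}$ is an $A$-invariant $\mathbb{T}$-valued rigid analytic function and therefore admits a $u$-expansion as in \eqref{E:uexp2}.

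With $A$-invariance established, the equivalence is coefficient bookkeeping. From $\mathcal{P}=\Theta_{t,\tz}\mathcal{Q}$ one reads off $\mathcal{P}_j=\mathcal{Q}_j$ for $2\le j\le r$ and $\mathcal{P}_1=\mathcal{Q}_1+\sum_{\mu=1}^{r-1}\chi_{\mu,\tz}(z_1,t)\mathcal{Q}_{\mu+1}$, so that $\Upsilon\mathcal{P}$ and $\Upsilon\mathcal{Q}$ have entries $(u\mathcal{P}_1,\mathcal{P}_2,\dots,\mathcal{P}_r)$ and $(u\mathcal{Q}_1,\mathcal{Q}_2,\dots,\mathcal{Q}_r)$. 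Assuming $\Upsilon\mathcal{Q}\in u\Hol(\Omega^{r-1},\mathbb{T})\dbl u\dbr^{r}$, the $\mathcal{Q}_j$ with $j\ge2$ vanish at infinity and $\mathcal{Q}_1$ is holomorphic (hence bounded) there; since $u\chi_{\mu,\tz}(z_1,t)\to0$ by Lemma \ref{L:growth}, every summand of $u\mathcal{P}_1=u\mathcal{Q}_1+\sum_\mu(u\chi_{\mu,\tz})\mathcal{Q}_{\mu+1}$ tends to $0$ along $\mathfrak{I}_{\tz}$, so $\Upsilon\mathcal{P}\to0$ and $\mathcal{P}\in\mathbb{M}_k(\rho_m)$. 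Conversely, if $\mathcal{P}\in\mathbb{M}_k(\rho_m)$, then for $j\ge2$ the limit $\mathcal{Q}_j=\mathcal{P}_j\to0$ forces the $u$-expansion of $\mathcal{Q}_j$ to have neither polar part nor constant term, i.e.\ $\mathcal{Q}_j\in u\Hol(\Omega^{r-1},\mathbb{T})\dbl u\dbr$; feeding this into $u\mathcal{Q}_1=u\mathcal{P}_1-\sum_\mu(u\chi_{\mu,\tz})\mathcal{Q}_{\mu+1}$ and again invoking Lemma \ref{L:growth} gives $u\mathcal{Q}_1\to0$, so the $A$-invariant $\mathcal{Q}_1$ is holomorphic at infinity and $\Upsilon\mathcal{Q}\in u\Hol(\Omega^{r-1},\mathbb{T})\dbl u\dbr^{r}$.

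The main obstacle I anticipate is not the cocycle cancellation, which is purely formal, but the passage from the analytic limit conditions along $\mathfrak{I}_{\tz}$ to statements about $u$-expansion coefficients. One must argue that an $A$-invariant $\mathbb{T}$-valued function whose value tends to a finite limit as $|\zz|_\infty\to\infty$ has no polar part in its $u$-expansion (ruling out an essential singularity at $u=0$), so that ``$\to0$'' becomes equivalent to the vanishing of the coefficients of $u^{n}$ for $n\le0$. This is precisely where the holomorphy theory underlying \eqref{E:uexp2} and Lemma \ref{L:weaku} must be applied carefully, and where I would make explicit that the growth of $\mathcal{P}$ controls that of $\mathcal{Q}$ through the estimate of Lemma \ref{L:growth}.
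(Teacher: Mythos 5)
Your argument is correct and follows essentially the same route as the paper: the key step in both is that Lemma \ref{P:funceq} identifies $\rho_m(\gamma_{a_1,\dots,a_{r-1}})=\overline{\gamma}_{a_1,\dots,a_{r-1}}$ with $\Theta_{t,\tz}(a_1z_2+\dots+a_{r-1}z_r,t)$, making $\Theta_{t,\tz}^{-1}\mathcal{P}$ $A$-invariant, after which the equivalence reduces to Lemma \ref{L:growth} and bookkeeping of constant and polar terms of $u$-expansions. The only difference is cosmetic: you package the cancellation as a cocycle/group-law identity for the matrices $\Theta_{t,\tz}$, while the paper carries out the same cancellation entry by entry on $\mathcal{P}_1-\sum_\mu\chi_{\mu,\tz}\mathcal{P}_{\mu+1}$.
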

\begin{proof} Let us set $\mathcal{P}=(\mathcal{P}_1,\dots,\mathcal{P}_r)^{\tr}\in \mathbb{M}_{k}(\rho_m)$ and write $\zz=(z_1,\tz)\in \Omega^r$. For any $a_1,\dots,a_{r-1}\in A$, consider the matrix $\gamma_{a_1,\dots,a_{r-1}}\in \GL_r(A)$ given in \eqref{E:ainv}. By the assumption on $\mathcal{P}$ and Lemma \ref{P:funceq}, we have 
\begin{multline*}
(\mathcal{P}_1-\chi_{1,\tz}\mathcal{P}_2-\dots-\chi_{r-1,\tz}\mathcal{P}_{r})(\gamma_{a_1,\dots,a_{r-1}}\cdot \zz,t)\\
=\mathcal{P}_1(\zz,t)+\sum_{i=1}^{r-1}a_i(t)\mathcal{P}_{i+1}(\zz,t)
-\chi_{1,\tz}(z_1,t)\mathcal{P}_2(\zz,t)-\cdots - \chi_{r-1,\tz}(z_1,t)\mathcal{P}_r(\zz,t)-\sum_{i=1}^{r-1}a_i(t)\mathcal{P}_{i+1}(\zz,t)\\
=\mathcal{P}_1(\zz,t)-\chi_{1,\tz}(z_1,t)\mathcal{P}_2(\zz,t)-\cdots-\chi_{r-1,\tz}(z_1,t)\mathcal{P}_{r}(\zz,t)
\end{multline*}
and for each $2\leq j \leq r$, 
\begin{equation}\label{E:funGj}
\mathcal{P}_j(\gamma_{a_1,\dots,a_{r-1}}\cdot \zz,t)=\mathcal{P}_j(\zz,t).
\end{equation}
Thus both functions $\mathcal{P}_1-\chi_{1,\tz}\mathcal{P}_2-\dots-\chi_{r-1,\tz}\mathcal{P}_{r}$ and $\mathcal{P}_j$ are $A$-invariant and hence have a $u$-expansion. Since $\mathcal{P}$ is a VDMF, we further see  that each $\mathcal{P}_j$ has a $u$-expansion with no constant term and polar part. Moreover, we  have 
\begin{equation}\label{E:lim2}
     \lim_{\substack{\zz=(z_1.\tz)\in \mathfrak{I}_{\tz}\\ |\zz|_{\infty}\to\infty}} u(\zz)\mathcal{P}_1(\zz,t)=0.
\end{equation}
Therefore, combining Lemma \ref{L:growth} and \eqref{E:lim2}, we see that the $u$-expansion of the entries of $ \Upsilon (\Theta_{t,\tz}^{-1}\mathcal{P})(\cdot,t)$ has no constant term and polar part and hence it must lie in $u\Hol(\Omega^{r-1},\mathbb{T})\dbl u \dbr^{r}$. For the other direction, if we assume that $\Upsilon (\Theta_{t,\tz}^{-1}\mathcal{P})(\cdot,t)\in u\Hol(\Omega^{r-1},\mathbb{T})\dbl u\dbr^{r}$, then it implies that for each $2\leq j \leq r$, $\mathcal{P}_j$ satisfies \eqref{E:funGj} and has a $u$-expansion with no constant term and polar part. Combining this with Lemma \ref{L:growth}, we also obtain \eqref{E:lim2}. Thus, $\mathcal{P}\in \mathbb{M}_k(\rho_m)$, finishing the proof of the proposition.
\end{proof}

We finish this subsection with a fundamental structural result on VDMFs.

\begin{theorem}\label{T:str} For any non-negative integer $k$, we have 
\[
\mathbb{M}_{k}(\rho_m)=\mathbb{M}^{m-1}_{k-\left(\frac{q^r-1}{q-1}-1\right)}\mathcal{G}_1\oplus \cdots \oplus\mathbb{M}^{m-1}_{k-\left(\frac{q^r-1}{q-1}-q^{r-1}\right)}\mathcal{G}_{r}.
\]
\end{theorem}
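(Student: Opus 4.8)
The plan is to realize the columns $\mathcal{G}_1,\dots,\mathcal{G}_r$ as a single invertible matrix and transport the claimed decomposition through it. Write $\mathcal{G}(\zz,t):=[\,\mathcal{G}_1(\zz,t)\mid\cdots\mid\mathcal{G}_r(\zz,t)\,]$, so that by definition $\mathcal{G}=c\,\mathcal{F}$ with $c:=\tilde{\pi}^{\frac{q^r-1}{q-1}}h_r(\zz)/\omega(t)^{(r-1)}$; by \eqref{E:det1}, since $\omega(t)^{(r-1)}\in\mathbb{T}^{\times}$ and $h_r$ is nowhere vanishing, one has $\mathcal{G}(\zz,t)\in\GL_r(\mathbb{T})$ for every $\zz$, with inverse $\mathcal{G}^{-1}=c^{-1}\mathcal{F}^{-1}$ whose entries lie in $\Hol(\Omega^r,\mathbb{T})$. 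Writing $w_i:=\frac{q^r-1}{q-1}-q^{i-1}$ for the weight of $\mathcal{G}_i$, the transformation law \eqref{E:andgenfunceq} assembles into $\mathcal{G}(\gamma\cdot\zz)=\rho(\gamma)\,\mathcal{G}(\zz)\,\mathrm{diag}(j(\gamma;\zz)^{w_1},\dots,j(\gamma;\zz)^{w_r})$ for $\gamma\in\GL_r(A)$. The one arithmetic input needed is that $\det(\overline{\gamma})=\det(\gamma)\in\mathbb{F}_q^{\times}$, whence $\rho(\gamma)^{-1}\rho_m(\gamma)=\det(\gamma)^{1-m}\Id$; feeding this into $f:=\mathcal{G}^{-1}\mathcal{P}$ shows $f_i(\gamma\cdot\zz)=j(\gamma;\zz)^{k-w_i}\det(\gamma)^{-(m-1)}f_i(\zz)$, so that $\mathcal{P}\mapsto f$ identifies $\mathbb{WM}_k(\rho_m)$ with tuples $(f_1,\dots,f_r)$, $f_i\in\mathbb{WM}^{m-1}_{k-w_i}$. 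Invertibility of $\mathcal{G}$ gives at once that $\sum_i\mathbb{M}^{m-1}_{k-w_i}\mathcal{G}_i$ is direct, and the inclusion $\supseteq$ follows because each $\mathcal{G}_i\in\mathbb{M}_{w_i}(\rho)$ satisfies $\Upsilon\mathcal{G}_i\to 0$ while any $f_i\in\mathbb{M}^{m-1}_{k-w_i}$ is bounded near the cusp.

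It remains to prove $\subseteq$: given $\mathcal{P}\in\mathbb{M}_{k}(\rho_m)$, the tuple $f=\mathcal{G}^{-1}\mathcal{P}$ already consists of weak $\mathbb{T}$-valued forms of the correct weight and type by the previous paragraph, so the whole content is that each $f_i$ is holomorphic at infinity. The obstruction to arguing directly is that neither $\mathcal{P}_1$ nor the entries $s_1^{(j)}$ of the first row of $\mathcal{F}$ are $A$-invariant, so they have no $u$-expansion. I would remove this by passing to the $A$-invariant picture: set $\mathcal{F}':=\Theta_{t,\tz}^{-1}\mathcal{F}$ and $\mathcal{P}':=\Theta_{t,\tz}^{-1}\mathcal{P}$. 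Left multiplication by $\Theta_{t,\tz}^{-1}$ replaces the first row of $\mathcal{F}$ by the entries $s_1^{(j)}-\sum_{\mu}\chi_{\mu,\tz}s_{\mu+1}^{(j)}$, and Lemma \ref{P:funceq} together with the shift relation $s_1^{(j)}(\gamma_{a_1,\dots,a_{r-1}}\cdot\zz)=s_1^{(j)}(\zz)+\sum_{\mu}a_\mu(t)s_{\mu+1}^{(j)}(\zz)$ shows that these combinations, as well as $\mathcal{P}'_1=\mathcal{P}_1-\sum_{\mu}\chi_{\mu,\tz}\mathcal{P}_{\mu+1}$, are $A$-invariant; thus every entry of $\mathcal{F}'$ and of $\mathcal{P}'$ now admits a $u$-expansion. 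Since $\Theta_{t,\tz}^{-1}$ is unipotent, $g:=\mathcal{F}^{-1}\mathcal{P}=\mathcal{F}'^{-1}\mathcal{P}'$ and $\det\mathcal{F}'=\det\mathcal{F}=\omega(t)/(\tilde{\pi}^{\frac{q^r-1}{q-1}}h_r)$, which has a simple pole at the cusp because $h_r$ vanishes there to order one.

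With everything $A$-invariant I would finish by a Cramer's rule order count. By the ($t$-uniform) boundary estimates of Lemmas \ref{L:Cor516} and \ref{L:growth}, every entry of $\mathcal{F}'$ is holomorphic at infinity except the top-right one $s_1^{(r-1)}-\sum_{\mu}\chi_{\mu,\tz}s_{\mu+1}^{(r-1)}$, which inherits the simple pole of $s_1^{(r-1)}$ (the $\chi$-terms being $o(u^{-1})$ cannot cancel it); and by Proposition \ref{P:equiv} applied to $\mathcal{P}$ we have $\mathcal{P}'_1$ holomorphic at infinity and $\mathcal{P}_2,\dots,\mathcal{P}_r\in u\Hol(\Omega^{r-1},\mathbb{T})\dbl u\dbr$. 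Writing $g_i=\det\mathcal{F}'_i(\mathcal{P}')/\det\mathcal{F}'$, where $\mathcal{F}'_i(\mathcal{P}')$ is $\mathcal{F}'$ with its $i$-th column replaced by $\mathcal{P}'$, I would expand $\det\mathcal{F}'_i(\mathcal{P}')$ along the $(1,r)$ entry: when $i=r$ the polar entry has been deleted and the determinant is $O(1)$; when $i<r$ the cofactor of the polar entry is an $(r-1)\times(r-1)$ minor one of whose columns consists of $\mathcal{P}_2,\dots,\mathcal{P}_r=O(u)$, so that minor is $O(u)$ and exactly cancels the simple pole, again leaving $\det\mathcal{F}'_i(\mathcal{P}')=O(1)$. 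Dividing by $\det\mathcal{F}'\sim u^{-1}$ gives $g_i=O(u)$ for all $i$, whence $f_i=c^{-1}g_i$ is holomorphic at infinity since $c^{-1}\sim u^{-1}$; this places $f_i\in\mathbb{M}^{m-1}_{k-w_i}$ and completes the decomposition. I expect the holomorphy step just described to be the main obstacle: making the order bookkeeping rigorous forces the passage to the $A$-invariant matrix $\mathcal{F}'$ and requires the sharp boundary estimates of Lemmas \ref{L:Cor516}, \ref{L:growth} and Proposition \ref{P:equiv}, since a naive computation with $\mathcal{P}_1$ and $s_1^{(j)}$ is not even well defined at the cusp.
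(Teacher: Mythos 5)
Your proposal is correct and shares the paper's overall skeleton: both arguments transport the decomposition through the invertible matrix $(\mathcal{G}_1\mid\cdots\mid\mathcal{G}_r)$ (the paper's $\mathrm{E}(\zz,t)=\det(\mathcal{E}(\zz,t))\mathcal{F}(\zz,t)$, your $c\,\mathcal{F}$), deduce directness from invertibility, get the inclusion $\supseteq$ from \eqref{E:andgenfunceq} and Lemma \ref{L:Cor516}, and reduce $\subseteq$ to showing that the entries of $\mathrm{E}^{-1}\mathcal{P}$ are holomorphic at infinity. Where you genuinely diverge is in that last step. The paper expresses $\mathrm{E}^{-1}$ through the cofactors $\mathcal{C}_{ij}$ of $\mathcal{F}$ and asserts $\lim \mathcal{C}_{ij}(\zz,t)\mathcal{P}_j(\zz,t)<\infty$ directly from Lemma \ref{L:Cor516} and the condition $\Upsilon\mathcal{P}\to 0$; this is very short, but it multiplies quantities ($\mathcal{P}_1$, and the cofactors containing $s_1^{(r-1)}$) that are not $A$-invariant and whose growth is controlled only one-sidedly, so that products of the shape $O(u^{-1})\cdot o(1)$ or $o(u^{-1})\cdot O(1)$ appear and the asserted finiteness is not actually derived. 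Your detour through $\Theta_{t,\tz}^{-1}$ --- a device the paper introduces for Propositions \ref{P:equiv} and \ref{P:pres} but does not deploy in this proof --- replaces these quantities by $A$-invariant ones admitting genuine $u$-expansions, after which the Cramer's-rule count ($\det\mathcal{F}'_i(\mathcal{P}')=O(1)$, $\det\mathcal{F}'\sim u^{-1}$, $c^{-1}\sim u^{-1}$) is clean, checkable bookkeeping. What your route buys is exactly the rigor the paper's one-line limit claim lacks; what it costs is length, plus two points you should still make explicit: that the limits in Lemma \ref{L:Cor516} hold in $\mathbb{T}$ and not only after the specialization $t=\theta$ (the proof of Lemma \ref{L:growth} indicates this), and that the $A$-invariant entries of $\mathcal{F}'$ are meromorphic at infinity (cf.\ Lemma \ref{L:weaku}), so that the limit statements genuinely pin down the polar parts of their $u$-expansions.
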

\begin{proof}Our method is motivated by the idea of the proof of \cite[Thm. 3.9]{PP18}. Set $ \mathrm{E}(\zz,t):=\det(\mathcal{E}(\zz,t))\mathcal{F}(\zz,t)=(\mathcal{G}_1(\zz,t),\dots,\mathcal{G}_r(\zz,t))\in \GL_r(\mathbb{T})$. Since
\[
\det(\mathrm{E}(\zz,t))=\left(\frac{\tilde{\pi}^{\frac{q^r-1}{q-1}}h_r(\zz)}{\omega(t)^{(r)}}\right)^r\frac{\omega(t)}{\tilde{\pi}^{\frac{q^r-1}{q-1}}h_r(\zz)}=\left(\frac{\tilde{\pi}^{\frac{q^r-1}{q-1}}h_r(\zz)}{\omega(t)}\right)^{r-1}\frac{1}{(t-\theta)^r\cdots(t-\theta^{q^{r-1}})^r}
\]
    and $h_r$ is a non-vanishing function on $\Omega^r$, $\det(\mathrm{E}(\zz,t))$ is a unit in $\mathbb{T}$. We consider the map $\iota: \mathbb{M}^{m-1}_{k-\left(\frac{q^r-1}{q-1}-1\right)}\oplus \cdots \oplus\mathbb{M}^{m-1}_{k-\left(\frac{q^r-1}{q-1}-q^{r-1}\right)} \to \mathbb{M}_k(\rho_m) $ given by 
    \[
    \iota(\mathcal{L}):=\mathrm{E}\mathcal{L}=\mathcal{G}_1\mathcal{L}_1+\cdots+ \mathcal{G}_{r}\mathcal{L}_{r}, \ \ \mathcal{L}=(\mathcal{L}_1,\dots,\mathcal{L}_r)^{\tr}\in \mathbb{M}^{m-1}_{k-\left(\frac{q^r-1}{q-1}-1\right)}\oplus \cdots \oplus\mathbb{M}^{m-1}_{k-\left(\frac{q^r-1}{q-1}-q^{r-1}\right)}
    \]
and claim that it is an isomorphism of $\mathbb{T}$-modules. Clearly, for any $\gamma\in \GL_r(A)$, we have
\[
\mathrm{E}(\gamma\cdot \zz,t)\mathcal{L}(\gamma\cdot \zz,t)=j(\gamma;\zz)^k\rho_m(\gamma)\mathrm{E}(\zz,t)\mathcal{L}(\zz,t).
\]
On the other hand, since $h_r$ is a Drinfeld cusp form, by Lemma \ref{L:Cor516}, for any choice of  $\tz\in \Omega^{r-1}$, we have
\[
\Upsilon\mathrm{E}(\zz,t)\mathcal{L}(\zz,t)\to \begin{pmatrix}
    0\\
    \vdots\\
    0
\end{pmatrix}
\]
as $|\zz|_{\infty}\to \infty$ where $\zz=(z_1,\tz)\in \mathfrak{I}_{\tz}$. Therefore, $\iota(\mathcal{L})\in \mathbb{M}_{k}(\rho_m)$. On the other hand, since $\mathrm{E}(\zz,t)$ is invertible, injectivity of $\iota$ immediately follows.

Now we claim that $\iota$ is surjective. First, we let $\gamma\in \GL_r(A)$ and by \eqref{E:andgenfunceq} as well as  the fact that $h_r\in \mathcal{M}_{q^r-1/(q-1)}^{1}$, we observe 
\begin{equation}\label{E:funceqF}
\mathrm{E}(\gamma\cdot \zz,t)=\rho(\gamma)\mathrm{E}(\zz,t)\begin{pmatrix}
    j(\gamma;\zz)^{\frac{q^r-1}{q-1}-1}& & & & \\
     & j(\gamma;\zz)^{\frac{q^r-1}{q-1}-q} & & & \\
     & & \ddots & & \\
     & & & \ddots & \\
     & & & & j(\gamma;\zz)^{\frac{q^r-1}{q-1}-q^{r-1}}
\end{pmatrix}.
\end{equation}
Recall from Remark \ref{R:intr} that $\Cof(\zz,t)$ is the cofactor matrix of $\mathcal{F}(\zz,t)$
and we have
\[
\mathcal{F}(\zz,t)^{-1}=\frac{\tilde{\pi}^{\frac{q^r-1}{q-1}}h_r(\zz)}{\omega(t)}\Cof(\zz,t)^{\tr}.
\]
Therefore, using \eqref{E:det2} yields
\[
\mathrm{E}(\zz,t)^{-1}=\frac{\omega(t)^{(r)}}{\tilde{\pi}^{\frac{q^r-1}{q-1}}h_r(\zz)}\frac{\tilde{\pi}^{\frac{q^r-1}{q-1}}h_r(\zz)}{\omega(t)}\Cof(\zz,t)^{\tr}
=(t-\theta)\cdots (t-\theta^{q^{r-1}})\Cof(\zz,t)^{\tr}.
\]
Let $\mathcal{P}=(\mathcal{P}_1,\dots,\mathcal{P}_r)^{\tr}\in \mathbb{M}_k(\rho_m)$. Then by using \eqref{E:funceqF} as well as our assumption on $\mathcal{P}$, we see that 
\begin{equation}\label{E:weakmod}
(\mathrm{E}^{-1}\mathcal{P})(\gamma\cdot \zz,t)=\det(\gamma)^{1-m}\begin{pmatrix}
    j(\gamma;\zz)^{k-\left(\frac{q^r-1}{q-1}-1\right)} & &  & \\
     &\ddots & & \\
     & & \ddots & \\
     & & & j(\gamma;\zz)^{k-\left(\frac{q^r-1}{q-1}-q^{r-1}\right)}
\end{pmatrix}(\mathrm{E}^{-1}\mathcal{P})(\zz,t).
\end{equation}
For $1\leq i \leq r$, let $\mathfrak{g}_i(\cdot,t)$ be the $i$-th entry of $(\mathrm{E}^{-1}\mathcal{P})(\cdot,t)$. Our goal is to show that each $\mathfrak{g}_i(\cdot,t)$ is a $\mathbb{T}$-valued Drinfeld modular form. First, since $\mathcal{P}$ is a VDMF, by Proposition \ref{P:equiv}, we have 
\begin{equation}\label{E:limitweak1}
\lim_{\substack{\zz=(z_1,\tz)\in \mathfrak{I}_{\tz}\\ |\zz|_{\infty}\to\infty}}u(\zz)\mathcal{P}_{1}(\zz,t)=0
\end{equation}
and for $2\leq \mu\leq r$, $\mathcal{P}_{\mu}\in u\mathbb{T}\dbl u\dbr$. For $1\leq i,j \leq r$, let $\mathcal{C}_{ij}(\zz,t)\in \mathbb{T}$ be the $(i,j)$-cofactor of $\mathcal{F}(\zz,t)$. Then by Lemma \ref{L:Cor516}, for any $\tz\in \Omega^{r-1}$ and $2\leq \mu \leq r$, we have
\begin{equation}\label{E:limitcofactor}
\lim_{\substack{\zz=(z_1,\tz)\in \mathfrak{I}_{\tz}\\ |\zz|_{\infty}\to\infty}}\mathcal{C}_{\mu j}(\zz,t)\mathcal{P}_{\mu}(\zz,t)<\infty
\end{equation}
and, by \eqref{E:limit11}, note that
\begin{equation}\label{E:limitweak2}
\lim_{\substack{\zz=(z_1,\tz)\in \mathfrak{I}_{\tz}\\ |\zz|_{\infty}\to\infty}}\mathcal{C}_{1 j}(\zz,t)<\infty.
\end{equation}
Observe that $\mathfrak{g}_i(\zz,t)=\sum_{i=1}^r\mathcal{C}_{ij}(\zz,t)\mathcal{P}_i(\zz,t)$. Thus, using \eqref{E:weakmod} and \eqref{E:limitweak1}--\eqref{E:limitweak2}, we see that $\mathfrak{g}_i(\cdot,t)$ is a weak $\mathbb{T}$-valued Drinfeld modular form. More precisely, we obtain
\begin{equation}\label{E:limitmod}
 \lim_{\substack{\zz=(z_1,\tz)\in \mathfrak{I}_{\tz}\\ |\zz|_{\infty}\to\infty}} u(\zz)\mathfrak{g}_i(\zz,t)=0.
\end{equation}
We claim that \eqref{E:limitmod} implies that $\mathfrak{g}_i(\cdot,t)$ is holomorphic at infinity. Indeed, if there was a negative integer $n$ so that the coefficient $\mathfrak{f}_n:\Omega^{r-1}\to \mathbb{T}$ of $u^n$ in the $u$-expansion of $\mathfrak{g}_i(\cdot,t)$ is not identically zero, then there would exist a $\tz\in \Omega^{r-1}$ so that 
 \[
 \lim_{\substack{\zz=(z_1,\tz)\in \mathfrak{I}_{\tz}\\ |\zz|_{\infty}\to\infty}} u(\zz)\mathfrak{g}_i(\zz,t)\neq 0
 \]
 which would contradict to \eqref{E:limitmod}. Thus, the $u$-expansion of $\mathfrak{g}_i(\cdot,t)$ has no polar part and hence, combining with \eqref{E:weakmod}, $\mathfrak{g}_i(\cdot,t)$ is a $\mathbb{T}$-valued Drinfeld modular form in $\mathbb{M}^{m-1}_{k-\left(\frac{q^r-1}{q-1}-q^{i-1}\right)}$. Therefore, $\iota$ is surjective and we finish the proof of our claim.

\end{proof}

\section{Hecke operators on vectorial Drinfeld modular forms} Recall that $\mathfrak{p}$ is a non-constant monic irreducible polynomial in $A$. We define the $k$-th slash operator $||_{k,m}$ with respect to type $m\in \mathbb{Z}/(q-1)\mathbb{Z}$ acting on $\Hol(\Omega^r,\TT^r)$ by 
\begin{multline*}
(\mathcal{P}||_{k,m}\gamma)(\zz):=j(\gamma;\zz)^{-k}\rho_m(\gamma)^{-1}\mathcal{P}(\gamma\cdot \zz)\\
=j(\gamma;\zz)^{-k}\det(\overline{\gamma})^{m}\overline{\gamma}^{-1}\mathcal{P}(\gamma\cdot \zz), \ \ \gamma\in \GL_r(K), \ \ \mathcal{P}\in \Hol(\Omega^r,\TT^r).
\end{multline*}
The following lemma can be easily derived from the definition of $||_{k,m}$ and the fact that (see \cite[(1.4)]{BBP18})
\[
j(\gamma\gamma';\zz)=j(\gamma;\gamma'\cdot \zz)j(\gamma';\zz) , \ \ \gamma,\gamma'\in \GL_r(K).
\]
We leave the details of its proof to the reader.
\begin{lemma}\label{L:slash} The following statements hold.
\begin{itemize}
    \item[(i)] For any $\gamma,\gamma'\in \GL_r(K)\cap \Mat_{r}(A)$ and $\mathcal{P}\in \Hol(\Omega^r,\TT^r)$, we have 
    \[
    \mathcal{P}||_{k,m}\gamma \gamma'=(\mathcal{P}||_{k,m}\gamma)||_{k,m}\gamma'.
    \]
    \item[(ii)] $\mathcal{P}$ is a weak VDMF of weight $k$ with respect to $\rho_m$ if and only if $\mathcal{P}||_{k,m}\gamma=\mathcal{P}$ for all $\gamma\in \GL_r(A)$.
\end{itemize}

\end{lemma}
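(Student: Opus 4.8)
The plan is to reduce both parts to the cocycle identity $j(\gamma\gamma';\zz)=j(\gamma;\gamma'\cdot\zz)j(\gamma';\zz)$ recorded just above the statement, together with two further standard facts: that the normalized action is a genuine left action, i.e. $(\gamma\gamma')\cdot\zz=\gamma\cdot(\gamma'\cdot\zz)$ for $\gamma,\gamma'\in\GL_r(K)$, and that reduction $\gamma\mapsto\overline{\gamma}$ is multiplicative. Both of these may be cited from \cite{BBP18} alongside the cocycle relation.

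For part (i), I would first note that for $\gamma\in\GL_r(K)\cap\Mat_r(A)$ the matrix factor $\rho_m(\gamma)^{-1}=\det(\overline{\gamma})^m\overline{\gamma}^{-1}$ is well defined: since $\theta\mapsto t$ is an isomorphism $A\iso\mathbb{F}_q[t]$, the element $\det\overline{\gamma}=\overline{\det\gamma}$ is a nonzero polynomial and $\overline{\gamma}\in\GL_r(\mathbb{F}_q(t))$ (indeed $\det(\overline{\gamma})^m\overline{\gamma}^{-1}=\det(\overline{\gamma})^{m-1}\mathrm{adj}(\overline{\gamma})$ has polynomial entries for the representative $1\le m\le q-1$). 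Writing $A_\gamma:=\det(\overline{\gamma})^m\overline{\gamma}^{-1}$, I expand directly:
\[
(\mathcal{P}||_{k,m}\gamma)||_{k,m}\gamma'
=j(\gamma';\zz)^{-k}A_{\gamma'}\,j(\gamma;\gamma'\cdot\zz)^{-k}A_{\gamma}\,\mathcal{P}\bigl(\gamma\cdot(\gamma'\cdot\zz)\bigr).
\]
Now pull the scalar $j(\gamma;\gamma'\cdot\zz)^{-k}$ past the matrix $A_{\gamma'}$ and apply the three facts: the cocycle identity collapses the two $j$-powers into $j(\gamma\gamma';\zz)^{-k}$; associativity turns the argument of $\mathcal{P}$ into $(\gamma\gamma')\cdot\zz$; and multiplicativity of reduction gives
\[
A_{\gamma'}A_{\gamma}=\det(\overline{\gamma'})^m\det(\overline{\gamma})^m\,\overline{\gamma'}^{-1}\overline{\gamma}^{-1}=\det(\overline{\gamma\gamma'})^m\,\overline{\gamma\gamma'}^{-1}=A_{\gamma\gamma'}.
\]
The right-hand side is then exactly $\mathcal{P}||_{k,m}(\gamma\gamma')$. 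The single point demanding care is the order of the non-commuting matrix factors: because the slash operator inserts the inverse representation $\overline{\gamma}^{-1}$, nesting produces $A_{\gamma'}A_{\gamma}$ with $\gamma'$ on the left, and it is precisely this product that reduction sends to $A_{\gamma\gamma'}$.

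For part (ii), the statement is a direct transcription of the definition. By definition of $||_{k,m}$, the equality $\mathcal{P}||_{k,m}\gamma=\mathcal{P}$ reads $j(\gamma;\zz)^{-k}\rho_m(\gamma)^{-1}\mathcal{P}(\gamma\cdot\zz)=\mathcal{P}(\zz)$, and multiplying on the left by $j(\gamma;\zz)^k\rho_m(\gamma)$ shows this is equivalent to $\mathcal{P}(\gamma\cdot\zz)=j(\gamma;\zz)^k\rho_m(\gamma)\mathcal{P}(\zz)$. Imposing this for every $\gamma\in\GL_r(A)$ is exactly the defining functional equation of a weak VDMF of weight $k$ with respect to $\rho_m$, so the two conditions coincide.

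The only genuinely mechanical input is the associativity $(\gamma\gamma')\cdot\zz=\gamma\cdot(\gamma'\cdot\zz)$ of the normalized $\GL_r(K)$-action, which need only be verified once (the normalization by $j(\gamma;\zz)$ in the final coordinate is compatible with the underlying projective action). Granting this and the cocycle relation, everything is formal, and I anticipate no serious obstacle beyond the order-of-multiplication bookkeeping flagged in part (i).
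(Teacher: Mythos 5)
Your proof is correct and follows exactly the route the paper intends: the paper itself omits the details, noting only that the lemma "can be easily derived from the definition of $||_{k,m}$" and the cocycle identity $j(\gamma\gamma';\zz)=j(\gamma;\gamma'\cdot\zz)j(\gamma';\zz)$, and your computation (including the reversed product $A_{\gamma'}A_{\gamma}=A_{\gamma\gamma'}$ forced by the inverse in the slash operator) is precisely that derivation. Your added remark on fixing a representative $1\le m\le q-1$ so that $\det(\overline{\gamma})^{m}\overline{\gamma}^{-1}$ has polynomial entries for $\gamma\in\GL_r(K)\cap\Mat_r(A)$ is a point the paper glosses over, and is a welcome precision rather than a deviation.
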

 Recall $\delta\in \GL_r(K)$ from \S2.4 as well as the set of matrices $B_{\ell,r}$ so that their union $\cup_{\ell=1}^{r}B_{\ell,r}$ forms a set of coset representatives for $\GL_r(A)\setminus \GL_r(A)\delta \GL_r(A)$.
\begin{definition} We define our Hecke operator $\mathrm{T}_{\mathfrak{p},r}:\mathbb{WM}_{k}(\rho_m)\to \mathbb{WM}_{k}(\rho_m)$ by 
\[
\mathrm{T}_{\mathfrak{p},r}(\mathcal{P}):=\mathfrak{p}^k\sum_{\gamma}\mathcal{P}||_{k,m}\gamma , \ \ \mathcal{P}\in \mathbb{WM}_{k}(\rho_m)
\]
where $\gamma$ runs through elements of the union $ \cup_{\ell=1}^rB_{\ell,r}$ of matrices in $\Mat_r(A)$.
 \end{definition}
 \begin{remark} We note that the well-definedness of our Hecke operator follows from Proposition \ref{P:repr} and Lemma \ref{L:slash}.      
 \end{remark}
\begin{remark} We emphasize that, similar to Remark \ref{R:Hecke}, our notation $\mathrm{T}_{\mathfrak{p},r}$ does not include $k$ and $m$. However, since we consider $\mathrm{T}_{\mathfrak{p},r}(\mathcal{P})$ for some $\mathcal{P}\in \mathbb{WM}_k(\rho_m)$, the integers $k$ and $m$ should be clear from the weight and the corresponding representation $\rho_m$ of the weak vectorial Drinfeld modular form $\mathcal{P}$.     
\end{remark}
 
\begin{proposition}\label{P:pres} The operator $\mathrm{T}_{\mathfrak{p},r}$ is an endomorphism of the $\mathbb{T}$-module $\mathbb{M}_{k}(\rho_m)$.
\end{proposition}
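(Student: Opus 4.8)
The plan is to prove that $\mathrm{T}_{\mathfrak{p},r}$ sends $\mathbb{M}_k(\rho_m)$ into itself; since the slash operators, and hence $\mathrm{T}_{\mathfrak{p},r}$, are manifestly $\mathbb{T}$-linear, this well-definedness is the entire content. That $\mathrm{T}_{\mathfrak{p},r}(\mathcal{P})$ is a \emph{weak} VDMF whenever $\mathcal{P}$ is follows from Lemma \ref{L:slash} by the usual Hecke argument: right multiplication by $\gamma_0\in\GL_r(A)$ permutes the coset representatives $\cup_{\ell=1}^r B_{\ell,r}$ up to left factors in $\GL_r(A)$, and those factors are absorbed using Lemma \ref{L:slash}(i),(ii). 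Fixing $\mathcal{P}\in\mathbb{M}_k(\rho_m)$, it therefore remains only to verify the growth condition of Definition \ref{D:vdmf}, namely that $\Upsilon\,\mathrm{T}_{\mathfrak{p},r}(\mathcal{P})(\zz)\to 0$ as $|\zz|_\infty\to\infty$ along $\zz=(z_1,\tz)\in\mathfrak{I}_{\tz}$, for every $\tz\in\Omega^{r-1}$.

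Before estimating, I would record that each summand $\mathcal{P}||_{k,m}\beta_{\ell,\bb}$ is genuinely $\mathbb{T}$-valued. Writing $\mathfrak{p}(t):=\mathfrak{p}|_{\theta=t}$, one has $\det(\overline{\beta_{\ell,\bb}})=\mathfrak{p}(t)$, so that the matrix $\rho_m(\beta_{\ell,\bb})^{-1}=\mathfrak{p}(t)^m\,\overline{\beta_{\ell,\bb}}^{-1}$ has diagonal entries $\mathfrak{p}(t)^m$ away from position $(\ell,\ell)$, entry $\mathfrak{p}(t)^{m-1}$ at $(\ell,\ell)$, entries $-\mathfrak{p}(t)^{m-1}\,b_i|_{\theta=t}$ at $(i,\ell)$ for $i<\ell$, and zeros elsewhere. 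Because $m\ge 1$, every such entry is a polynomial in $t$; hence $\rho_m(\beta_{\ell,\bb})^{-1}\in\Mat_r(\mathbb{F}_q[t])$ introduces no pole on the closed unit disk and does not grow in $\zz$. This is precisely the point at which the standing hypothesis $m\neq 0$ enters.

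For the growth itself I would run through the representatives $\cup_{\ell=1}^r B_{\ell,r}$ as in the proof of Theorem \ref{T:HeckeDr}. For fixed $\tz$ and $|z_1|$ large, the $K_\infty$-span of the last $r-1$ coordinates of $\beta_{\ell,\bb}\cdot\zz$ equals $\Span_{K_\infty}(z_2,\dots,z_r)$, so the distance of its first coordinate to that span is unchanged and, ultrametrically, dominates; thus $\beta_{\ell,\bb}\cdot\zz$ lies in $\mathfrak{I}_{\tz'}$ with $\tz':=\widetilde{\beta_{\ell,\bb}\cdot\zz}$ fixed and tends to the cusp. Invoking the VDMF property of $\mathcal{P}$ at $\tz'$ gives $\mathcal{P}_j(\beta_{\ell,\bb}\cdot\zz)\to 0$ for $2\le j\le r$ and $u(\beta_{\ell,\bb}\cdot\zz)\,\mathcal{P}_1(\beta_{\ell,\bb}\cdot\zz)\to 0$. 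Combined with the explicit shape of $\rho_m(\beta_{\ell,\bb})^{-1}$, every entry of index $\ge 2$ of $\mathcal{P}||_{k,m}\beta_{\ell,\bb}$ is a polynomial combination of such vanishing terms, while its first entry is a combination of $\mathcal{P}_1(\beta_{\ell,\bb}\cdot\zz)$ and, when $\ell\ge 2$, of $\mathcal{P}_\ell(\beta_{\ell,\bb}\cdot\zz)$; after multiplication by $u(\zz)$ both pieces vanish in the limit, using $u(\zz)\,\mathcal{P}_\ell(\beta_{\ell,\bb}\cdot\zz)\to 0$ together with the comparability $u(\zz)/u(\beta_{\ell,\bb}\cdot\zz)=O(1)$. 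The prefactor $\mathfrak{p}^k\,j(\beta_{\ell,\bb};\zz)^{-k}$ is $\mathfrak{p}^k$ for $\ell<r$ and $1$ for $\ell=r$, hence harmless, and summing the finitely many representatives yields $\Upsilon\,\mathrm{T}_{\mathfrak{p},r}(\mathcal{P})\to 0$, as required.

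The main obstacle is the analytic bookkeeping at the cusp in the previous paragraph: confirming that each $\beta_{\ell,\bb}\cdot\zz$ approaches the boundary \emph{inside} some $\mathfrak{I}_{\tz'}$ (so that the defining limit of $\mathcal{P}$ may be invoked) and that the two uniformizers $u(\zz)$ and $u(\beta_{\ell,\bb}\cdot\zz)$ remain comparable, most delicately for the representatives $\ell=r$ where $j(\beta_{r,\bb};\zz)=\mathfrak{p}$ rescales the lattice. These are exactly the geometric inputs underlying Basson's Theorem \ref{T:HeckeDr} and the limit computations of Lemmas \ref{L:Cor516} and \ref{L:growth}, and I would extract them from the leading asymptotics of $\exp_{\Lambda_{\tz}}$ and the relation $\exp_{\mathfrak{p}\Lambda'}(\mathfrak{p}z)=\mathfrak{p}\exp_{\Lambda'}(z)$ of \eqref{E:rel1}.
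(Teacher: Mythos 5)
Your reduction to the cusp condition, the verification that $\mathrm{T}_{\mathfrak{p},r}(\mathcal{P})$ is a weak VDMF via Proposition \ref{P:repr} and Lemma \ref{L:slash}, the explicit form of $\rho_m(\beta_{\ell,\bb})^{-1}$, and the treatment of the entries of index $\ge 2$ (which only require $\mathcal{P}_j(\beta_{\ell,\bb}\cdot\zz)\to 0$, transported through $\mathfrak{I}_{\tz'}$) are all sound and consistent with the paper. The gap is in the first entry, and it is concentrated at the representative $\ell=1$, not at $\ell=r$ as you suggest. Your key claim $u(\zz)/u(\beta_{\ell,\bb}\cdot\zz)=O(1)$ is \emph{false} for $\beta_{1,(\mathfrak{p},0,\dots,0)}$: there $\beta_{1,(\mathfrak{p},0,\dots,0)}\cdot\zz=(\mathfrak{p}z_1,\tz)$ and
\[
u(\mathfrak{p}z_1,\tz)^{-1}=\exp_{A^{r-1}\tilde{\pi}\tz}(\tilde{\pi}\mathfrak{p}z_1)=\psi_{\mathfrak{p}}\bigl(\exp_{A^{r-1}\tilde{\pi}\tz}(\tilde{\pi}z_1)\bigr),
\]
where $\psi$ is the rank-$(r-1)$ Drinfeld module of the lattice $A^{r-1}\tilde{\pi}\tz$; since $\psi_{\mathfrak{p}}$ has degree $q^{(r-1)\deg\mathfrak{p}}$ in $\tau$, one gets $|u(\mathfrak{p}z_1,\tz)|\asymp |u(\zz)|^{q^{(r-1)\deg\mathfrak{p}}}$, so $u(\zz)/u(\beta_{1,\bb}\cdot\zz)\to\infty$. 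Consequently, knowing only the defining limit $u(w,\tz)\mathcal{P}_1(w,\tz)\to 0$ and substituting $w=\mathfrak{p}z_1$ gives $\mathcal{P}_1(\mathfrak{p}z_1,\tz)=o\bigl(u(\zz)^{-q^{(r-1)\deg\mathfrak{p}}}\bigr)$, which is far too weak to conclude $u(\zz)\mathcal{P}_1(\mathfrak{p}z_1,\tz)\to 0$. (For $2\le\ell\le r$ the ratio in fact tends to $0$, since there the relevant lattice is an index-$q^{\deg\mathfrak{p}}$ sublattice of $\Lambda_{\tz}$ rather than the image under $\psi_{\mathfrak{p}}$, so your argument does work for those terms; but a single bad representative sinks the sum.)

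To close the gap you need finer information on $\mathcal{P}_1$ than the raw limit in Definition \ref{D:vdmf}, and this is exactly why the paper routes the proof through Proposition \ref{P:equiv}: writing $\mathcal{P}=\Theta_{t,\tz}(z_1)(\mathfrak{h}_1,\dots,\mathfrak{h}_r)^{\tr}$ with $\mathfrak{h}_1\in\Hol(\Omega^{r-1},\mathbb{T})\dbl u\dbr$ and $\mathfrak{h}_2,\dots,\mathfrak{h}_r\in u\Hol(\Omega^{r-1},\mathbb{T})\dbl u\dbr$, one has $\mathcal{P}_1(\mathfrak{p}z_1,\tz)=\mathfrak{h}_1(\mathfrak{p}z_1,\tz)+\sum_{\mu}\chi_{\mu,\tz}(\mathfrak{p}z_1)\mathfrak{h}_{\mu+1}(\mathfrak{p}z_1,\tz)$, where the first term is bounded and each product $\chi_{\mu,\tz}(\mathfrak{p}z_1)u_{\mathfrak{p}}(\zz)$ tends to $0$ by Lemma \ref{L:growth} (this is \eqref{E:claimell0}); hence $\mathcal{P}_1(\mathfrak{p}z_1,\tz)$ is bounded and $u(\zz)\mathcal{P}_1(\mathfrak{p}z_1,\tz)\to 0$. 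The paper's splitting of $\mathcal{P}$ into $\mathcal{B}_1=(\mathfrak{h}_1,\dots,\mathfrak{h}_r)^{\tr}$ and $\mathcal{B}_2=(\mathfrak{g},0,\dots,0)^{\tr}$, treated respectively by the Goss-polynomial computation of Theorem \ref{T:HeckeDr} and by Lemma \ref{L:growth}, is therefore not optional bookkeeping but the essential input; your proposal needs to incorporate it (or an equivalent quasi-periodicity argument for $\mathcal{P}_1$) to be correct.
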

\begin{proof} It suffices to show that for any $\mathcal{P}\in \mathbb{M}_{k}(\rho_m)$, $\mathrm{T}_{\mathfrak{p},r}(\mathcal{P})$ satisfies the condition in Definition \ref{D:vdmf}. Let  $\bb=(b_1,\dots,b_r)\in A^{r}$ be a tuple.  Recall that as for $\zz=(z_1, \tz)$, we define $\widetilde \bb=(b_2, \cdots, b_r)$ so that $\bb=(b_1, \widetilde \bb)$.  

We start with claiming that for any $2\leq \ell \leq r$,  $\zz=(z_1,z_2,\dots,z_r)=(z_1,\tz)\in \mathfrak{I}_{\tz}$ if and only if $\beta_{\ell,(0,\widetilde \bb)}\cdot \zz\in \mathfrak{I}_{\beta_{\ell-1,\widetilde \bb}\cdot \tz}$. Indeed,  note that, when $2\leq \ell < r$, our claim simply follows from the fact that the free $K_{\infty}$-module of rank $r-1$ generated by the entries of $ \beta_{\ell-1,\widetilde \bb}\cdot \tz $ is also the free $K_{\infty}$-module of the same rank generated by the entries of $\tz$. On the other hand, since the free $K_{\infty}$-module of rank $r-1$ generated by the entries of $ \beta_{r,\widetilde\bb}\cdot \tz $ is also the free $K_{\infty}$-module generated by $\frac{z_2}{\mathfrak{p}},\dots,\frac{z_{r-1}}{\mathfrak{p}},1$, we also obtain the claim for the case $\ell=r$. 

Next, we let $A^{r-1}\beta_{\ell,\widetilde \bb}\cdot \tz$ be the $A$-lattice of rank $r-1$ generated by the entries of $ \beta_{\ell,\widetilde \bb}\cdot \tz $. In what follows, we analyze $u(\beta_{\ell,\mathbf{b}}\cdot \zz)$.

\textbf{The case $1\leq \ell \leq r-1$: }We set 
\[
\mathfrak{u}_{\ell}:=\frac{1}{\exp_{A^{r-1}\beta_{\ell-1,\widetilde \bb}\cdot \tz}(z_1)}\ \ \ \text{ and } \ \ \ \
\widetilde{\mathfrak{e}}_{\ell}:=\exp_{A^{r-1}\beta_{\ell-1,\widetilde \bb}\cdot \tz}(b_1z_{\ell})
\]
for $b_1\in A$ such that $\deg_{\theta}(b_1)<\deg_{\theta}(\mathfrak{p})$.  Note that $\beta_{\ell,\bb}\cdot \zz =(z_1+b_1z_{\ell},\beta_{\ell-1,\widetilde \bb}\cdot \tz)$. Then we see that 
\begin{multline}\label{E:claim2}
u(\beta_{\ell,\bb}\cdot \zz )=\frac{\tilde{\pi}^{-1}}{\exp_{A^{r-1}\beta_{\ell-1,\widetilde \bb}\cdot \tz}(z_1+b_1z_{\ell})}\\= \frac{\tilde{\pi}^{-1}}{\exp_{A^{r-1}\beta_{\ell-1,\widetilde \bb}\cdot \tz}(z_1)+\exp_{A^{r-1}\beta_{\ell-1,\widetilde \bb}\cdot \tz}(b_1z_{\ell}) } 
=\frac{\tilde{\pi}^{-1}}{\mathfrak{u}_{\ell}^{-1}+\widetilde{\mathfrak{e}}_{\ell}}=\frac{\tilde{\pi}^{-1}\mathfrak{u}_{\ell}}{1+\mathfrak{u}_{\ell}\widetilde{\mathfrak{e}}_{\ell}}\\
=\tilde{\pi}^{-1}\mathfrak{u}_{\ell}\sum_{n=0}^{\infty}(-1)^n \widetilde{\mathfrak{e}}_{\ell}^n\mathfrak{u}_{\ell}^n
\end{multline} 
where the last equality is well-defined for sufficiently large $z_1$. 

\textbf{The case $\ell=r$: } In this case, our analysis follows the same argument above up to certain changes. Namely, we set 
\[
\mathfrak{u}_{r}:=\frac{1}{\exp_{A^{r-1}\beta_{r-1,\widetilde \bb}\cdot \tz}(z_1/\mathfrak{p})}\ \ \ \text{ and } \ \ \ \
\widetilde{\mathfrak{e}}_{r}:=\exp_{A^{r-1}\beta_{r-1,\widetilde \bb}\cdot \tz}(b_1/\mathfrak{p})
\]
for $b_1\in A$ such that $\deg_{\theta}(b_1)<\deg_{\theta}(\mathfrak{p})$.  In this case, we have $\beta_{\ell,\bb}\cdot \zz =\left(\frac{z_1+b_1}{\mathfrak{p}},\beta_{r-1,\widetilde \bb}\cdot \tz\right)$. Then, similar to \eqref{E:claim2}, we see that
\begin{equation}\label{E:claim3}
u(\beta_{r,\bb}\cdot \zz )
=\tilde{\pi}^{-1}\mathfrak{u}_{r}\sum_{n=0}^{\infty}(-1)^n \widetilde{\mathfrak{e}}_{r}^n\mathfrak{u}_{r}^n
\end{equation} 
whenever $z_1$ is sufficiently large.

After we establish our claim as well as obtain \eqref{E:claim2} and \eqref{E:claim3}, inspired by the idea of the proof of \cite[Prop. 5.12]{PP18}, we now use Proposition \ref{P:equiv} to finish the proof of the proposition. We can write 
\[
\mathcal{P}(\zz)=\Theta_{t,\tz}(z_1)\begin{pmatrix}
    \mathfrak{h}_1(\zz)\\
    \vdots\\
    \mathfrak{h}_r(\zz)
\end{pmatrix}=\begin{pmatrix}\mathfrak{h}_1(\zz)+\chi_{1,\tz}(z_1)\mathfrak{h}_2(\zz)+\cdots+\chi_{r-1,\tz}(z_1)\mathfrak{h}_r(\zz)\\
\mathfrak{h}_2(\zz)\\
\vdots\\
\mathfrak{h}_r(\zz)
\end{pmatrix}
\] for some $u\mathfrak{h}_1,\mathfrak{h}_2,\dots,\mathfrak{h}_r\in u\Hol(\Omega^{r-1},\mathbb{T})\dbl u\dbr$. Consider  
\[
\mathfrak{g}(\zz,t):=\chi_{1,\tz}(z_1,t)\mathfrak{h}_2(\zz,t)+\cdots+\chi_{r-1,\tz}(z_1,t)\mathfrak{h}_r(\zz,t).
\]
Letting $\mathcal{B}_1:=[\mathfrak{h}_1,\dots,\mathfrak{h}_r]^{\tr}$ and $\mathcal{B}_2:=[\mathfrak{g},0,\dots,0]^{\tr}$, we now set 
\[
\mathrm{T}_{\mathfrak{p},r}^{*}(\mathcal{B}_1):=\mathfrak{p}^k\sum_{\gamma}\mathcal{B}_1||_{k,m}\gamma
\]
where $\gamma$ runs through the elements of the union $ \cup_{\ell=1}^rB_{\ell,r}$ and 
\[
\mathrm{T}_{\mathfrak{p},r}^{*}(\mathcal{B}_2):=\mathfrak{p}^k\sum_{\gamma}\mathcal{B}_2||_{k,m}\gamma=\begin{pmatrix}\mathfrak{p}^k\left( \mathfrak{p}(t)^m\sum_{\tilde{\gamma}_1}\mathfrak{g}|_{k}\tilde{\gamma}_1+ \mathfrak{p}(t)^{m-1} \mathfrak{g}|_{k}\beta_{1,(\mathfrak{p},0,\dots,0)} \right)\\
0\\
\vdots\\
0
\end{pmatrix}
\]
where $\tilde{\gamma}_1$ runs through elements of $ \cup_{\ell=2}^{r}B_{\ell,r}$. Using the definition of the operator $||_{k,m}$, we further observe that 
\[
\mathrm{T}_{\mathfrak{p},r}(\mathcal{P})=\mathrm{T}_{\mathfrak{p},r}^{*}(\mathcal{B}_1)+\mathrm{T}_{\mathfrak{p},r}^{*}(\mathcal{B}_2).
\]
For any $\tz\in \Omega^{r-1}$, $\zz=(z_1,\tz) \in \mathfrak{I}_{\tz}$ and $i=1,2$, our next goal is to show that 
\begin{equation}\label{E:limits}
\Upsilon\mathrm{T}_{\mathfrak{p},r}^{*}(\mathcal{B}_1)\to \begin{pmatrix}
    0\\
    \vdots\\
    0
\end{pmatrix}   \text{ and } \mathrm{T}_{\mathfrak{p},r}^{*}(\mathcal{B}_2)\to \begin{pmatrix}
    0\\
    \vdots\\
    0
\end{pmatrix} 
\end{equation}
as $|\zz|_{\infty}\to \infty$. First, note that, since $\mathfrak{h}_1,\dots,\mathfrak{h}_r$ has a $u$-expansion, applying the same idea in the proof of Theorem \ref{T:HeckeDr}, we obtain that each entry of $\mathrm{T}_{\mathfrak{p},r}^{*}(\mathcal{B}_1)$ has a $u$-expansion. Furthermore, if we use the assumption $\mathfrak{h}_2,\dots,\mathfrak{h}_r\in u\Hol(\Omega^{r-1},\mathbb{T})\dbl u\dbr$, we see that each entry of $\Upsilon \mathrm{T}_{\mathfrak{p},r}^{*}(\mathcal{B}_1)$ has a $u$-expansion with no constant term. Hence, by \eqref{E:ulimit}, it finishes the proof of  the first statement in \eqref{E:limits}.  

Now we show the second statement in \eqref{E:limits}. We divide our argument into two cases. More precisely, for each $1\leq \ell \leq r$, we will analyze the terms $\mathfrak{g}(\tilde{\gamma}_1\cdot \zz,t)$ and $ \mathfrak{g}(\beta_{1,(\mathfrak{p},0,\dots,0)}\cdot \zz,t)$ where $\tilde{\gamma}_1$ runs through elements of $ \cup_{\ell=2}^{r}B_{\ell,r}$ and  $\beta_{1,(\mathfrak{p},0,\dots,0)}$ is the only element in $B_{1,r}$.

\textbf{The case $1\leq \ell \leq r-1$: } Using the fact that $\beta_{\ell,\bb}\cdot \zz =(z_1+b_1z_{\ell},\beta_{\ell-1,\widetilde \bb}\cdot \tz)$ and the definition of $\mathfrak{g}(\cdot,t)$, for $1\leq \mu \leq r-1$ and $1\leq \ell \leq r-1$, it suffices to show that
\begin{equation}\label{E:claim4}
   \lim_{\substack{\zz=(z_1,\tz)\in \mathfrak{I}_{\tz}\\ |\zz|_{\infty}\to\infty}} \chi_{\mu, \beta_{\ell-1,\widetilde{\bb}}\cdot \tz }(z_1+b_1z_{\ell},t)\mathfrak{h}_{\mu+1}(\beta_{\ell,\bb}\cdot \zz,t) =0.
   \end{equation}
  First we let $\ell=1$. Then \eqref{E:claim4} becomes
\[
 \lim_{\substack{\zz=(z_1,\tz)\in \mathfrak{I}_{\tz}\\ |\zz|_{\infty}\to\infty}} \chi_{\mu,\tz}(\mathfrak{p}z_1,t)\mathfrak{h}_{\mu+1}(\beta_{1,(\mathfrak{p},0,\dots,0)}\cdot \zz,t) =0.
 \]
Now recall $u_{\mathfrak{p}}(\zz)$ defined in \eqref{E:defua}. Since each $\mathfrak{h}_2,\dots,\mathfrak{h}_r$ has a $u$-expansion with no constant term, our case reduces to show that 
\begin{equation}\label{E:claimell0}
 \lim_{\substack{\zz=(z_1,\tz)\in \mathfrak{I}_{\tz}\\ |\zz|_{\infty}\to\infty}} \chi_{\mu,\tz}(\mathfrak{p}z_1,t)u_{\mathfrak{p}}(\zz) =0.
 \end{equation}
Since $ \zz=(z_1,\tz)\in \mathfrak{I}_{\tz}$ if and only if $ \zz=(\mathfrak{p}z_1,\tz)\in \mathfrak{I}_{\tz} $, \eqref{E:claimell0} follows from Lemma \ref{L:growth}. Next, we let $2\leq \ell \leq r-1$. In this case, \eqref{E:claim4} becomes
\begin{multline*}
 \lim_{\substack{\zz=(z_1,\tz)\in \mathfrak{I}_{\tz}\\ |\zz|_{\infty}\to\infty}} \chi_{\mu,\beta_{\ell-1,\widetilde \bb}\cdot \tz }(z_1+b_1z_{\ell},t)\mathfrak{h}_{\mu+1}(\beta_{\ell,\bb}\cdot \zz,t) \\=
\lim_{\substack{\zz=(z_1,\tz)\in \mathfrak{I}_{\tz}\\ |\zz|_{\infty}\to\infty}} \chi_{\mu,\beta_{\ell-1,\widetilde\bb}\cdot \tz }(z_1,t)\mathfrak{h}_{\mu+1}(\beta_{\ell,\bb}\cdot \zz,t) \\
+\lim_{\substack{\zz=(z_1,\tz)\in \mathfrak{I}_{\tz}\\ |\zz|_{\infty}\to\infty}}\chi_{\mu,\beta_{\ell-1,\widetilde \bb}\cdot \tz}( b_1z_{\ell},t)\mathfrak{h}_{\mu+1}(\beta_{\ell,\bb}\cdot \zz,t)
= 0.
 \end{multline*}
Again, since each $\mathfrak{h}_2,\dots,\mathfrak{h}_r$ has a $u$-expansion with no constant term and $ \chi_{\mu,\beta_{\ell-1,\widetilde \bb}\cdot \tz }( b_1z_{\ell},t) $ is independent of $z_1$, it suffices to show that 
\begin{equation}\label{E:claimellgeneral}
 \lim_{\substack{\zz=(z_1,\tz)\in \mathfrak{I}_{\tz}\\ |\zz|_{\infty}\to\infty}} \chi_{\mu,\beta_{\ell-1,\widetilde \bb}\cdot \tz }(z_1,t)u(\beta_{\ell,\bb}\cdot \zz ) =0.
 \end{equation}
Finally, using our first claim at the beginning of the proof and \eqref{E:claim2}, we see that \eqref{E:claimellgeneral} reduces to analyze the limit
\[
 \lim_{\substack{(z_1,\beta_{\ell-1,\widetilde\bb}\cdot \tz)\in \mathfrak{I}_{\beta_{\ell-1,\widetilde \bb}\cdot \tz}\\ |(z_1,\beta_{\ell-1,\widetilde \bb}\cdot \tz)|_{\infty}\to\infty}} \chi_{\mu,\beta_{\ell-1,\widetilde\bb}\cdot \tz }(z_1,t)\mathfrak{u}_{\ell} = \lim_{\substack{(z_1,\beta_{\ell-1,\widetilde\bb}\cdot \tz)\in \mathfrak{I}_{\beta_{\ell-1,\widetilde \bb}\cdot \tz}\\ |(z_1,\beta_{\ell-1,\widetilde \bb}\cdot \tz)|_{\infty}\to\infty}} \chi_{\mu,\beta_{\ell-1,\widetilde\bb}\cdot \tz }(z_1,t)\exp_{A^{r-1}\beta_{\ell-1,\widetilde \bb}\cdot \tz}(z_1)^{-1}.
\]
Replacing $\tz$ with $\beta_{\ell-1,\widetilde\bb}\cdot \tz  $ in Lemma \ref{L:growth}, we finally obtain
\[
\lim_{\substack{(z_1,\beta_{\ell-1,\widetilde\bb}\cdot \tz)\in \mathfrak{I}_{\beta_{\ell-1,\widetilde \bb}\cdot \tz}\\ |(z_1,\beta_{\ell-1,\widetilde \bb}\cdot \tz)|_{\infty}\to\infty}} \chi_{\mu,\beta_{\ell-1,\widetilde\bb}\cdot \tz }(z_1,t)\exp_{A^{r-1}\beta_{\ell-1,\widetilde \bb}\cdot \tz}(z_1)^{-1}=0,
\] 
implying \eqref{E:claimellgeneral}.

\textbf{The case $\ell=r$: } Note that $\beta_{r,\bb}\cdot \zz =\left(\frac{z_1+b_1}{\mathfrak{p}},\beta_{r-1,\widetilde \bb}\cdot \tz\right)$. As in the previous case, it is enough to show that
\begin{equation}\label{E:claim5}
   \lim_{\substack{\zz=(z_1,\tz)\in \mathfrak{I}_{\tz}\\ |\zz|_{\infty}\to\infty}} \chi_{\mu, \beta_{r-1,\widetilde{\bb}}\cdot \tz }\left(\frac{z_1+b_1}{\mathfrak{p}},t\right)\mathfrak{h}_{\mu+1}(\beta_{r,\bb}\cdot \zz,t) =0.
   \end{equation}
Proceeding as above, \eqref{E:claim5} reduces to show that 
\begin{equation}\label{E:claimellgeneral2}
 \lim_{\substack{\zz=(z_1,\tz)\in \mathfrak{I}_{\tz}\\ |\zz|_{\infty}\to\infty}} \chi_{\mu, \beta_{r-1,\widetilde{\bb}}\cdot \tz}\left(z_1/\mathfrak{p},t\right)u(\beta_{r,\bb}\cdot \zz ) =0.
 \end{equation}
Again using our first claim at the beginning of the proof and \eqref{E:claim3}, we see that \eqref{E:claimellgeneral2} reduces to analyze the limit
\begin{multline*}
 \lim_{\substack{(z_1,\beta_{r-1,\widetilde\bb}\cdot \tz)\in \mathfrak{I}_{\beta_{r-1,\widetilde \bb}\cdot \tz}\\ |(z_1,\beta_{r-1,\widetilde \bb}\cdot \tz)|_{\infty}\to\infty}} \chi_{\mu,\beta_{r-1,\widetilde\bb}\cdot \tz }\left(z_1/\mathfrak{p},t\right)\mathfrak{u}_{r} \\= \lim_{\substack{(z_1,\beta_{r-1,\widetilde\bb}\cdot \tz)\in \mathfrak{I}_{\beta_{r-1,\widetilde \bb}\cdot \tz}\\ |(z_1/\mathfrak{p},\beta_{\ell-1,\widetilde \bb}\cdot \tz)|_{\infty}\to\infty}} \chi_{\mu,\beta_{r-1,\widetilde\bb}\cdot \tz }(z_1/\mathfrak{p},t)\exp_{A^{r-1}\beta_{r-1,\widetilde \bb}\cdot \tz}(z_1/\mathfrak{p})^{-1}.
\end{multline*}
Since $ (z_1,\beta_{r-1,\widetilde \bb}\cdot \tz)\in \mathfrak{I}_{\beta_{r-1,\widetilde \bb}\cdot \tz}$ if and only if $ (z_1/\mathfrak{p},\beta_{r-1,\widetilde \bb}\cdot \tz)\in \mathfrak{I}_{\beta_{r-1,\widetilde \bb}\cdot \tz}$, replacing $\tz$ with $\beta_{r-1,\widetilde\bb}\cdot \tz  $ in Lemma \ref{L:growth}, we obtain
\[
\lim_{\substack{(z_1,\beta_{r-1,\widetilde\bb}\cdot \tz)\in \mathfrak{I}_{\beta_{r-1,\widetilde \bb}\cdot \tz}\\ |(z_1/\mathfrak{p},\beta_{\ell-1,\widetilde \bb}\cdot \tz)|_{\infty}\to\infty}} \chi_{\mu,\beta_{r-1,\widetilde\bb}\cdot \tz }(z_1/\mathfrak{p},t)\exp_{A^{r-1}\beta_{r-1,\widetilde \bb}\cdot \tz}(z_1/\mathfrak{p})^{-1}=0,
\] 
implying \eqref{E:claimellgeneral2}. Thus, we show \eqref{E:limits} which implies that $\mathrm{T}_{\mathfrak{p},r}(\mathcal{P})$ satisfies the condition in Definition \ref{D:vdmf}. That is, $\mathrm{T}_{\mathfrak{p},r}(\mathcal{P})\in \mathbb{M}_{k}(\rho_m)$ as desired.
\end{proof}
Let $\mathcal{T}_{\mathfrak{p},r,k,m}$ be the operator given by the last coordinate of $\mathrm{T}_{\mathfrak{p},r}$ which acts on the space $ \mathbb{M}_{k}(\rho_m)$. Our next lemma simply follows from the definition of $\mathrm{T}_{\mathfrak{p},r}$ and the structure of elements in $B_{\ell,r}$ for each $1\leq \ell \leq r$ which are all upper triangular matrices.
\begin{lemma}\label{L:Heckelast} Let $\mathcal{P}=(\mathcal{P}_1,\dots,\mathcal{P}_r)^{\tr}\in \mathbb{M}_{k}(\rho_m)$. Then we have
\[
\mathcal{T}_{\mathfrak{p},r,k,m}(\mathcal{P}_r)=\mathfrak{p}^k\left( \mathfrak{p}(t)^m\sum_{\gamma_1}\mathcal{P}_r|_{k}\gamma_1+\mathfrak{p}(t)^{m-1}\sum_{\gamma_2} \mathcal{P}_r|_{k}\gamma_2 \right)
\]
where $\gamma_1$ runs through elements of $ \cup_{\ell=1}^{r-1}B_{\ell,r}$ and $\gamma_2$ runs through elements of $B_{r,r}$. 
\end{lemma}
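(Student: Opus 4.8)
The plan is to extract the final coordinate of $\mathrm{T}_{\mathfrak{p},r}(\mathcal{P})=\mathfrak{p}^k\sum_{\gamma}\mathcal{P}||_{k,m}\gamma$ directly from the explicit formula $\mathcal{P}||_{k,m}\gamma=j(\gamma;\zz)^{-k}\det(\overline{\gamma})^{m}\overline{\gamma}^{-1}\mathcal{P}(\gamma\cdot\zz)$, exploiting that every coset representative $\beta_{\ell,\bb}\in\cup_{\ell=1}^{r}B_{\ell,r}$ is upper triangular. First I would record the two structural facts that drive the computation. Since the $\ell$-th column of $\beta_{\ell,\bb}$ is $(b_1,\dots,b_{\ell-1},\mathfrak{p},0,\dots,0)^{\tr}$ and every other column is a standard basis vector, the matrix is upper triangular with diagonal $(1,\dots,1,\mathfrak{p},1,\dots,1)$, the entry $\mathfrak{p}$ occurring in position $\ell$. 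Hence $\det(\beta_{\ell,\bb})=\mathfrak{p}$, so $\det(\overline{\beta_{\ell,\bb}})=p(t)$ for every representative, and the $(r,r)$-entry of $\overline{\beta_{\ell,\bb}}$ equals $1$ when $\ell<r$ and equals $p(t)$ when $\ell=r$ (equivalently $j(\beta_{r,\bb};\zz)=\mathfrak{p}$, exactly as computed in the proof of Theorem \ref{T:HeckeDr}).

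The key reduction is that for an upper triangular matrix $M$ the inverse $M^{-1}$ is again upper triangular, so its last row is $(0,\dots,0,M_{rr}^{-1})$. Applying this to $M=\overline{\gamma}$, the $r$-th entry of $\overline{\gamma}^{-1}\mathcal{P}(\gamma\cdot\zz)$ is simply $(\overline{\gamma})_{rr}^{-1}\mathcal{P}_r(\gamma\cdot\zz)$, and none of the other coordinates $\mathcal{P}_1,\dots,\mathcal{P}_{r-1}$ contribute. Consequently the $r$-th entry of $\mathcal{P}||_{k,m}\gamma$ equals $\det(\overline{\gamma})^{m}(\overline{\gamma})_{rr}^{-1}\bigl(j(\gamma;\zz)^{-k}\mathcal{P}_r(\gamma\cdot\zz)\bigr)=p(t)^{m}(\overline{\gamma})_{rr}^{-1}\,\mathcal{P}_r|_{k}\gamma$, where I have absorbed $j(\gamma;\zz)^{-k}$ into the definition of the scalar slash operator $|_{k}$.

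Feeding in the two values of $(\overline{\gamma})_{rr}$ then splits the sum: for $\gamma_1\in\cup_{\ell=1}^{r-1}B_{\ell,r}$ the factor $(\overline{\gamma_1})_{rr}^{-1}=1$ produces the weight $p(t)^{m}$, while for $\gamma_2\in B_{r,r}$ the factor $(\overline{\gamma_2})_{rr}^{-1}=p(t)^{-1}$ produces the weight $p(t)^{m-1}$. Multiplying by the overall prefactor $\mathfrak{p}^{k}$ and summing yields precisely the claimed identity. There is no serious obstacle here: the entire content is the upper triangularity of the representatives together with the elementary fact about the last row of the inverse of a triangular matrix. The only point requiring a little care is the bookkeeping of $j(\gamma;\zz)^{-k}$ — one should keep it attached to $\mathcal{P}_r(\gamma\cdot\zz)$ so that it reassembles cleanly into $\mathcal{P}_r|_{k}\gamma$ rather than being double-counted against the explicit $\mathfrak{p}^{k}$ prefactor — and confirming that $\det(\overline{\gamma})^{m}$ factors out uniformly as $p(t)^{m}$ across all representatives, the only genuine variation between the two sums being the single power of $p(t)$ coming from $(\overline{\gamma})_{rr}^{-1}$.
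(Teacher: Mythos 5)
Your proof is correct and follows exactly the route the paper indicates (the paper gives no written proof beyond the remark that the lemma "simply follows from the definition of $\mathrm{T}_{\mathfrak{p},r}$ and the structure of elements in $B_{\ell,r}$ ... which are all upper triangular matrices"). Your fleshed-out version — last row of the inverse of an upper triangular matrix is $(0,\dots,0,(\overline{\gamma})_{rr}^{-1})$, with $(\overline{\gamma})_{rr}=1$ for $\ell<r$ and $(\overline{\gamma})_{rr}=p(t)$ for $\ell=r$, and $\det(\overline{\gamma})^m=p(t)^m$ uniformly — is precisely the intended argument and the bookkeeping of $j(\gamma;\zz)^{-k}$ is handled correctly.
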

The following immediate corollary of Lemma \ref{L:Heckelast} is crucial to prove our main result in this section.

\begin{corollary}\label{C:HeckeRed} Let $\mathcal{P}=(\mathcal{P}_1,\dots,\mathcal{P}_r)^{\tr}\in \mathbb{M}_{k}(\rho)$ and assume that $\mathcal{P}_r(\zz,\theta^{q^n})\in \mathcal{M}_{k+q^{n}}$. Then, upon the substitution $t=\theta^{q^{n}}$,  $\mathcal{T}_{\mathfrak{p},r,k,1}$ induces the Hecke operator $T_{\mathfrak{p},r}$ acting on $\mathcal{M}_{k+q^{n}}$. 
\end{corollary}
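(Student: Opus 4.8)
The plan is to derive the identity by direct computation, starting from the explicit formula for $\mathcal{T}_{\mathfrak{p},r,k,1}$ furnished by Lemma \ref{L:Heckelast} with $m=1$ and tracking what each ingredient does under the specialization $t=\theta^{q^n}$. Writing $\gamma_1$ for a representative in $\cup_{\ell=1}^{r-1}B_{\ell,r}$ and $\gamma_2$ for one in $B_{r,r}$, Lemma \ref{L:Heckelast} gives
\[
\mathcal{T}_{\mathfrak{p},r,k,1}(\mathcal{P}_r)=\mathfrak{p}^k\left( p(t)\sum_{\gamma_1}\mathcal{P}_r|_{k}\gamma_1+\sum_{\gamma_2} \mathcal{P}_r|_{k}\gamma_2 \right),
\]
and I would evaluate the right-hand side at $t=\theta^{q^n}$ termwise.

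The computation rests on three elementary observations. First, since $\mathfrak{p}\in\mathbb{F}_q[\theta]$ has coefficients in $\mathbb{F}_q$, which are fixed by $x\mapsto x^{q^n}$, we have $p(\theta^{q^n})=\mathfrak{p}|_{\theta=\theta^{q^n}}=\mathfrak{p}(\theta)^{q^n}=\mathfrak{p}^{q^n}$. Second, the matrices in $B_{\ell,r}$ are upper triangular with last row $(0,\dots,0,1)$ when $\ell<r$ and $(0,\dots,0,\mathfrak{p})$ when $\ell=r$, so exactly as in the proof of Theorem \ref{T:HeckeDr} one has $j(\gamma_1;\zz)=1$ and $j(\gamma_2;\zz)=\mathfrak{p}$. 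Third, for any $\gamma$ the definitions of the two slash operators yield the pointwise identity
\[
\left(\mathcal{P}_r|_k\gamma\right)\big|_{t=\theta^{q^n}}=j(\gamma;\zz)^{q^n}\,\bigl(\mathcal{P}_r(\cdot,\theta^{q^n})\,|_{k+q^n}\,\gamma\bigr),
\]
which is just the bookkeeping of the difference in automorphy weights, $k$ versus $k+q^n$.

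Substituting these facts and collecting terms, I would find that each of the two sums acquires a single factor $\mathfrak{p}^{q^n}$: in the $\gamma_1$-sum it comes from $p(\theta^{q^n})=\mathfrak{p}^{q^n}$ while the automorphy factor contributes nothing since $j(\gamma_1;\zz)=1$, whereas in the $\gamma_2$-sum the exponent of $p(t)$ is $m-1=0$ so $p(t)$ contributes nothing while $j(\gamma_2;\zz)^{q^n}=\mathfrak{p}^{q^n}$ supplies the factor. Both sums therefore carry the common normalization $\mathfrak{p}^{k+q^n}$, and merging them gives
\[
\mathcal{T}_{\mathfrak{p},r,k,1}(\mathcal{P}_r)\big|_{t=\theta^{q^n}}=\mathfrak{p}^{k+q^n}\sum_{\gamma\in\cup_{\ell=1}^r B_{\ell,r}}\mathcal{P}_r(\cdot,\theta^{q^n})\,|_{k+q^n}\,\gamma=T_{\mathfrak{p},r}\bigl(\mathcal{P}_r(\cdot,\theta^{q^n})\bigr),
\]
the last equality being the definition of $T_{\mathfrak{p},r}$ in weight $k+q^n$. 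The hypothesis $\mathcal{P}_r(\zz,\theta^{q^n})\in\mathcal{M}_{k+q^n}$ is what guarantees the right-hand side is genuinely the Hecke operator acting within this space. The calculation is routine, and the only point I would emphasize as the crux is the bookkeeping that reconciles the two distinct sources of the factor $\mathfrak{p}^{q^n}$ — the specialized polynomial $p(t)$ on one set of cosets and the automorphy factor $j(\gamma_2;\zz)^{q^n}$ on the other — so that together with $\mathfrak{p}^k$ they produce precisely the single normalizing power $\mathfrak{p}^{k+q^n}$ required by the definition of $T_{\mathfrak{p},r}$.
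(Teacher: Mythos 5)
Your computation is correct and is exactly the verification the paper leaves implicit: the corollary is stated as an immediate consequence of Lemma \ref{L:Heckelast}, and your termwise specialization at $t=\theta^{q^n}$ — using $p(\theta^{q^n})=\mathfrak{p}^{q^n}$, $j(\gamma_1;\zz)=1$, $j(\gamma_2;\zz)=\mathfrak{p}$, and the weight shift $(\mathcal{P}_r|_k\gamma)|_{t=\theta^{q^n}}=j(\gamma;\zz)^{q^n}(\mathcal{P}_r(\cdot,\theta^{q^n})|_{k+q^n}\gamma)$ — is precisely how the two sums recombine into $\mathfrak{p}^{k+q^n}\sum_{\gamma}\mathcal{P}_r(\cdot,\theta^{q^n})|_{k+q^n}\gamma=T_{\mathfrak{p},r}(\mathcal{P}_r(\cdot,\theta^{q^n}))$. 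No gaps; this matches the paper's intended argument.
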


In what follows, we analyze the last entry of an element in $\mathbb{M}_{k}(\rho_m)$. Note that if $\mathcal{P}=(\mathcal{P}_1,\dots,\mathcal{P}_r)^{\tr}\in \mathbb{M}_{k}(\rho)$ then $\mathcal{P}_r$ is $A$-invariant. Moreover, by the condition in Definition \ref{D:vdmf}, $\mathcal{P}_r$ has a $u$-expansion with no polar part.

For each $2\leq \ell \leq r$ and $\bb_{\ell}\in A^{r}$, let $L_{\ell,\mathbf{b}_{\ell}}$ be the $\mathbb{F}_q$-vector space given in \S2.2. Recall that for $n\geq 0$, we denote by $G_{n,L_{\ell,\mathbf{b}_{\ell}}}(X)\in \mathbb{C}_{\infty}[X]$  the $n$-th Goss polynomial corresponding to $L_{\ell,\mathbf{b}_{\ell}}$. Our next proposition can be shown by using the same argument to prove Theorem \ref{T:HeckeDr}. We leave the details to the reader.

\begin{proposition}\label{P:uexp} Let $\mathcal{P}=(\mathcal{P}_1,\dots,\mathcal{P}_r)^{\tr}\in \mathbb{M}_{k}(\rho_m)$ and $\mathcal{P}_r(\cdot,t)=\sum_{n=0}^{\infty}\mathfrak{f}_n(\cdot,t)u^n$ be the $u$-expansion of $\mathcal{P}_r(\cdot,t)$. 
Then we have 
\begin{multline*}
\mathcal{T}_{\mathfrak{p},r,k,m}(\mathcal{P}_r)(\zz,t)=\mathfrak{p}^k\mathfrak{p}(t)^m\sum_{n=0}^{\infty}\mathfrak{f}_n(\tz,t)u_{\mathfrak{p}}(\zz)^n\\
+\mathfrak{p}^k\mathfrak{p}(t)^m\sum_{n=0}^{\infty}\sum_{\substack{2\leq \ell \leq r-1 \\ \beta_{\ell,\mathbf{b}_{\ell}}\in B_{\ell,r}}}(\mathfrak{f}_n|_{k-n}\widetilde{\beta_{\ell,\mathbf{b}_{\ell}}})( \tz,t)G_{n,L_{\ell,\mathbf{b}_{\ell}}}(u(\zz))\\
+\mathfrak{p}^k\mathfrak{p}(t)^{m-1}\sum_{n=0}^{\infty}\sum_{\beta_{r,\mathbf{b}_r}\in B_{r,r}}(\mathfrak{f}_n|_{k-n}\widetilde{\beta_{r,\mathbf{b}_r}})( \tz,t)G_{n,L_{r,\mathbf{b}_{r}}}(u(\zz))\end{multline*}
for any $\zz\in \Omega^{r}$ lying in some neighborhood of infinity. In particular, the constant term of the $u$-expansion of $\mathcal{T}_{\mathfrak{p},r,k,m}(\mathcal{P}_r)$ is $\mathcal{T}_{\mathfrak{p},r-1,k,m}(\mathfrak{f}_0)$ and the linear term of the $u$-expansion of $\mathcal{T}_{\mathfrak{p},r,k,m}(\mathcal{P}_r)$ is equal to $\mathfrak{p}\mathcal{T}_{\mathfrak{p},r-1,k-1,m}(\mathfrak{f}_1)$.
\end{proposition}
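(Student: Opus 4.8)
The plan is to reduce the statement to Lemma~\ref{L:Heckelast} and then to replay, essentially verbatim, the coset-by-coset computation of the $u$-expansion carried out in the proof of Theorem~\ref{T:HeckeDr}; the only genuinely new feature is the bookkeeping of the scalars $p(t)^m$ and $p(t)^{m-1}$ produced by the representation $\rho_m$. First I would use Lemma~\ref{L:Heckelast} to write
\[
\mathcal{T}_{\mathfrak{p},r,k,m}(\mathcal{P}_r)=\mathfrak{p}^kp(t)^m\sum_{\gamma_1}\mathcal{P}_r|_{k}\gamma_1+\mathfrak{p}^kp(t)^{m-1}\sum_{\gamma_2}\mathcal{P}_r|_{k}\gamma_2,
\]
where $\gamma_1$ runs over $\cup_{\ell=1}^{r-1}B_{\ell,r}$ and $\gamma_2$ over $B_{r,r}$, so that the task becomes the computation of the $u$-expansion of each scalar term $\mathcal{P}_r|_{k}\gamma$. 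Recall that $\mathcal{P}_r$ is $A$-invariant and, by Definition~\ref{D:vdmf}, admits a $u$-expansion $\sum_{n\geq 0}\mathfrak{f}_n(\cdot,t)u^n$ with no polar part; since the twisting and lattice manipulations below are all $\TT$-linear, the fact that $\mathcal{P}_r$ is $\TT$-valued plays no role.

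Next I would reproduce the three-case analysis from the proof of Theorem~\ref{T:HeckeDr}. For $\ell=1$ the matrix $\beta_{1,\bb_1}$ satisfies $j(\beta_{1,\bb_1};\zz)=1$ and scales $z_1$ by $\mathfrak{p}$, so that $\mathcal{P}_r|_{k}\beta_{1,\bb_1}=\sum_n\mathfrak{f}_n(\tz,t)u_{\mathfrak{p}}(\zz)^n$. For $2\leq\ell<r$ one uses the identity $\beta_{\ell,\bb}\cdot\zz=(z_1+b_1z_\ell,\beta_{\ell-1,\widetilde{\bb}}\cdot\tz)$, performs the sum over $b_1$ via \eqref{E:sum2} and \cite[Prop.~3.4]{Gek88} to form the Goss polynomials $G_{n,L_{\ell,\bb}}$, and for $\ell=r$ one argues exactly as before using $j(\beta_{r,\bb_r};\zz)=\mathfrak{p}$ together with \eqref{E:rel1}. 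Attaching the prefactor $p(t)^m$ to the groups $\ell=1,\dots,r-1$ and $p(t)^{m-1}$ to the group $\ell=r$ then yields precisely the three-line formula in the statement.

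Finally I would read off the constant and linear coefficients, which is where the new $p(t)$-bookkeeping must be verified. Since $u_{\mathfrak{p}}(\zz)=\exp_{\Lambda_{\tz}}(\mathfrak{p}z_1)^{-1}$ vanishes in $u$ to order $q^{(r-1)\deg_{\theta}\mathfrak{p}}\geq 2$, the $\ell=1$ group contributes to the constant coefficient but not to the linear one; and by \cite[Prop.~3.4]{Gek88} the Goss polynomial $G_{n,L}$ has neither constant nor linear term once $n\geq 2$, while $G_{1,L}(u)=u$. Hence only $n=0$ feeds the constant coefficient and only $n=1$ the linear one, exactly as in Theorem~\ref{T:HeckeDr}. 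Invoking $\beta_{\ell,\bb}\cdot\zz=(z_1+b_1z_\ell,\beta_{\ell-1,\widetilde{\bb}}\cdot\tz)$ once more, the surviving $n=1$ terms are reindexed by the rank-$(r-1)$ cosets $\beta_{\ell-1,\widetilde{\bb}}\in\cup_{\ell'=1}^{r-1}B_{\ell',r-1}$; comparing with the expansion of $\mathfrak{p}\mathcal{T}_{\mathfrak{p},r-1,k-1,m}(\mathfrak{f}_1)$ furnished by Lemma~\ref{L:Heckelast} in rank $r-1$ (the extra factor $\mathfrak{p}$ being the discrepancy between the rank-$r$ prefactor $\mathfrak{p}^k$ and the weight-$(k-1)$ prefactor $\mathfrak{p}^{k-1}$ of $\mathfrak{f}_1$) produces the claimed linear term, and the analogous bookkeeping at $n=0$ produces the constant term $\mathcal{T}_{\mathfrak{p},r-1,k,m}(\mathfrak{f}_0)$. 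The step I expect to require the most care is checking that the $p(t)$-exponents match under the reduction $\ell\mapsto\ell-1$: the rank-$r$ groups $\ell=2,\dots,r-1$ carry $p(t)^m$ and must land in the rank-$(r-1)$ groups $\ell'=1,\dots,r-2$ (also weighted by $p(t)^m$ in $\mathcal{T}_{\mathfrak{p},r-1,k-1,m}$), whereas the rank-$r$ group $\ell=r$ carries $p(t)^{m-1}$ and must land in the last rank-$(r-1)$ group $\ell'=r-1$ (weighted by $p(t)^{m-1}$). This alignment of twist-powers is precisely the content that goes beyond the scalar statement of Theorem~\ref{T:HeckeDr}.
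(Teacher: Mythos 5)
Your proposal is correct and follows exactly the route the paper intends: the paper's own "proof" of this proposition is a one-line deferral ("can be shown by using the same argument to prove Theorem~\ref{T:HeckeDr}; we leave the details to the reader"), and your argument is precisely that — Lemma~\ref{L:Heckelast} to split off the $p(t)^m$ and $p(t)^{m-1}$ factors, then the verbatim coset-by-coset computation of Theorem~\ref{T:HeckeDr}, then the same $n=0$, $n=1$ extraction via Goss polynomials. If anything, you supply more detail than the paper does, in particular the explicit check that the twist-power exponents align under the reindexing $\ell\mapsto\ell-1$.
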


We are now ready to prove the main theorem of this section.

\begin{theorem}\label{T:Hecke} The function $\mathcal{G}_{r}(\cdot,t)\in \mathbb{M}_{\frac{q^{r-1}-1}{q-1}}(\rho)$ is a Hecke eigenform. Moreover, for any $\zz\in \Omega^r$, we have
\[
\mathrm{T}_{\mathfrak{p},r}(\mathcal{G}_{r})(\zz,t)=\mathfrak{p}^{1+q+\dots+q^{r-2}}\mathcal{G}_{r}(\zz,t).
\]
\end{theorem}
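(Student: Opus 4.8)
The plan is to first show that the ambient module is as small as possible, and then to read off the eigenvalue from a single coefficient of the $u$-expansion, arguing by induction on $r$. Since $\mathcal{G}_r$ has weight $\frac{q^{r-1}-1}{q-1}$, I would apply Theorem \ref{T:str} with $k=\frac{q^{r-1}-1}{q-1}$ and $m=1$. The $i$-th summand $\mathbb{M}^{0}_{k-(\frac{q^r-1}{q-1}-q^{i-1})}$ has weight $q^{i-1}-q^{r-1}$, which is strictly negative for $i<r$ and equal to $0$ for $i=r$. By Corollary \ref{C:tvalmodform} these summands are $\mathcal{M}^0_{q^{i-1}-q^{r-1}}\otimes\mathbb{T}$, and since there are no nonzero Drinfeld modular forms of negative weight while $\mathcal{M}^0_0=\mathbb{C}_{\infty}$, the whole space collapses to
\[
\mathbb{M}_{\frac{q^{r-1}-1}{q-1}}(\rho)=\mathbb{T}\,\mathcal{G}_r,
\]
a free $\mathbb{T}$-module of rank one. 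Because $\mathrm{T}_{\mathfrak{p},r}$ is $\mathbb{T}$-linear (scalars in $\mathbb{T}$ depend only on $t$ and commute with every slash operator) and preserves this space by Proposition \ref{P:pres}, it must act as multiplication by a single scalar $\lambda(t)\in\mathbb{T}$; that is, $\mathrm{T}_{\mathfrak{p},r}(\mathcal{G}_r)=\lambda(t)\mathcal{G}_r$. Thus $\mathcal{G}_r$ is automatically a Hecke eigenform, and the entire problem reduces to identifying $\lambda(t)$.

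To pin down $\lambda(t)$ I would compare the linear terms of the $u$-expansions of the last entries of both sides, which is legitimate because $\mathcal{H}_{r-1}^{(1)}\neq 0$ in $\mathbb{T}$. The last entry of $\mathcal{G}_r$ is $\mathcal{H}_r$ by Lemma \ref{L:funcg}, and Lemma \ref{L:uexp} gives its linear coefficient $\mathfrak{f}_1=(-1)^r\tilde{\pi}\,\mathcal{H}_{r-1}^{(1)}$. On the other hand, Proposition \ref{P:uexp} identifies the linear coefficient of $\mathcal{T}_{\mathfrak{p},r,k,1}(\mathcal{H}_r)$ as $\mathfrak{p}\,\mathcal{T}_{\mathfrak{p},r-1,k-1,1}(\mathfrak{f}_1)$. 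Equating these and cancelling the nonzero factor $(-1)^r\tilde{\pi}$ (using $\mathbb{C}_{\infty}$-linearity of the operator) yields the key identity
\[
\lambda(t)\,\mathcal{H}_{r-1}^{(1)}=\mathfrak{p}\,\mathcal{T}_{\mathfrak{p},r-1,k-1,1}\bigl(\mathcal{H}_{r-1}^{(1)}\bigr),
\]
which expresses $\lambda(t)$ entirely in terms of a rank $(r-1)$ Hecke action applied to the twist $\mathcal{H}_{r-1}^{(1)}$.

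The heart of the argument, and the step I expect to be most delicate, is a twist--Hecke compatibility. Setting $k'=\frac{q^{r-2}-1}{q-1}$ one checks $k-1=qk'$, and because $j(\gamma;\tz)$ carries no $t$ while the factors $p(t)$ and $\mathfrak{p}^{k-1}=(\mathfrak{p}^{k'})^{(1)}$ are compatible with the $q$-power twist, each slashed term matches: $\mathcal{H}_{r-1}^{(1)}|_{k-1}\gamma=(\mathcal{H}_{r-1}|_{k'}\gamma)^{(1)}$. Summing over the coset representatives as in Lemma \ref{L:Heckelast} (and using that twisting is a ring homomorphism on $\mathbb{T}$ fixing $\mathbb{F}_q[t]$) then gives
\[
\mathcal{T}_{\mathfrak{p},r-1,k-1,1}\bigl(\mathcal{H}_{r-1}^{(1)}\bigr)=\bigl(\mathcal{T}_{\mathfrak{p},r-1,k',1}(\mathcal{H}_{r-1})\bigr)^{(1)}.
\]
Here $\mathcal{H}_{r-1}$ is the last entry of the rank $(r-1)$ form $\mathcal{G}_{r-1}$ of weight $k'$, so the induction hypothesis (this very theorem for $r-1$) gives $\mathcal{T}_{\mathfrak{p},r-1,k',1}(\mathcal{H}_{r-1})=\mathfrak{p}^{1+q+\dots+q^{r-3}}\mathcal{H}_{r-1}$. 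Applying the twist and substituting back produces the recursion $\lambda(t)=\mathfrak{p}\,\bigl(\mathfrak{p}^{1+q+\dots+q^{r-3}}\bigr)^{(1)}=\mathfrak{p}^{1+q+\dots+q^{r-2}}$, a constant independent of $t$. For the base case $r=2$ one has $\mathcal{H}_1=1$, so $\mathfrak{f}_1=\tilde{\pi}$ and a direct rank-one computation of $\mathcal{T}_{\mathfrak{p},1,0,1}(\tilde{\pi})=\tilde{\pi}$ gives $\lambda(t)=\mathfrak{p}$, matching the empty-sum convention (alternatively, this is the rank-two case of \cite{PP18}). As a consistency check one may specialize at $t=\theta^{q^{r-1}}$: by Corollary \ref{C:HeckeRed} together with Theorem \ref{P:funcH}(ii), the value $\lambda(\theta^{q^{r-1}})$ must equal the Hecke eigenvalue of $h_r$, which by Proposition \ref{P:hfunc} is exactly $\mathfrak{p}^{1+q+\dots+q^{r-2}}$.
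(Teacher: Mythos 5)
Your proposal is correct and follows essentially the same route as the paper: induction on $r$, collapsing $\mathbb{M}_{\frac{q^{r-1}-1}{q-1}}(\rho)$ to $\mathbb{T}\,\mathcal{G}_r$ via Theorem \ref{T:str} and Corollary \ref{C:tvalmodform}, invoking Proposition \ref{P:pres} to get an eigenvalue $\lambda(t)\in\mathbb{T}$, and extracting it from the linear $u$-coefficient via Lemma \ref{L:uexp} and Proposition \ref{P:uexp} together with the twist--Hecke compatibility, which the paper uses implicitly in its chain of equalities and you make explicit. The only quibble is the aside that $\mathcal{H}_1=1$: unwinding the definitions (using $s_{\tilde{C}}(1,t)=\tilde{\pi}^{-1}\omega(t)$ and $h_1:=-1$) gives $\mathcal{H}_1=-1$, but the sign is immaterial to the eigenvalue and you in any case fall back on the rank-two result of \cite{PP18} for the base case, exactly as the paper does.
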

\begin{proof} We proceed by induction. When $r=2$, the theorem is proved in \cite[Prop. 5.16]{PP18}. Assume that the theorem holds for $r-1$. Observe that \cite[Thm. 11.1(b), Thm. 17.5(a)]{BBP18} implies  $\mathcal{M}_k^{0}=\{0\}$ if $k<0$ and $\mathcal{M}_{0}^{0}=\mathbb{C}_{\infty}$. Thus, by Corollary \ref{C:tvalmodform}, we have $\mathbb{M}_k^{0}=\{0\}$ if $k<0$ and $\mathbb{M}_{0}^{0}=\mathbb{T}$. Hence, by Theorem \ref{T:str}, we see that $\mathbb{M}_{\frac{q^{r-1}-1}{q-1}}(\rho)$ is generated by $\mathcal{G}_{r}(\cdot,t)$ over $\mathbb{T}$. On the other hand, by Proposition \ref{P:pres}, there exists $\alpha\in \mathbb{T}$ such that $\mathrm{T}_{\mathfrak{p},r}(\mathcal{G}_{r})(\zz,t)=\alpha \mathcal{G}_{r}(\zz,t)$ for all $\zz \in \Omega^r$. Therefore, to find $\alpha$, it suffices to analyze the behavior of the last entry of $\mathcal{G}_{r}(\cdot,t)$ under the Hecke operator. Since the last entry of $\mathcal{G}_{r-1}(\cdot,t)$ is $\mathcal{H}_{r-1}(\cdot,t)$, observe that, by the induction hypothesis, we have
\[
\mathcal{T}_{\mathfrak{p},r-1,\frac{q^{r-2}-1}{q-1},1}(\mathcal{H}_{r-1})(\tz,t)=\mathfrak{p}^{1+q+\cdots+q^{r-3}}\mathcal{H}_{r-1}(\tz,t).
\]
Thus, by Lemma \ref{L:uexp} and Proposition \ref{P:uexp}, analyzing the first coefficient of the $u$-expansion of $\mathrm{T}_{\mathfrak{p},r}(\mathcal{G}_{r})(\zz,t)$, we see that 
\begin{multline*}
\alpha(-1)^r\tilde{\pi}\mathcal{H}^{(1)}_{r-1}(\tz,t)=\mathfrak{p}\mathcal{T}_{\mathfrak{p},r-1,\frac{q^{r-1}-q}{q-1},1}((-1)^r\tilde{\pi}\mathcal{H}^{(1)}_{r-1}(\tz,t))\\=(-1)^r\tilde{\pi}\mathfrak{p}(\mathcal{T}_{\mathfrak{p},r-1,\frac{q^{r-2}-1}{q-1},1}(\mathcal{H}_{r-1}(\tz,t)))^{(1)}
=(-1)^r\tilde{\pi} \mathfrak{p} (\mathfrak{p}^{1+q+\cdots+q^{r-3}})^q\mathcal{H}^{(1)}_{r-1}(\tz,t)\\=\mathfrak{p}^{1+q+\cdots+q^{r-2}}(-1)^r\tilde{\pi}\mathcal{H}^{(1)}_{r-1}(\tz,t).
\end{multline*}
Here, the second equality follows from the $\mathbb{C}_{\infty}$-linearity of the operator $\mathcal{T}_{\mathfrak{p},r,k,m}$ and the fact that 
\begin{multline*}
    \mathcal{T}_{\mathfrak{p},r-1,\frac{q^{r-1}-q}{q-1},1}(\mathcal{H}^{(1)}_{r-1}(\tz,t))=\mathfrak{p}^{\frac{q^{r-1}-q}{q-1}}\left(\mathfrak{p}(t)\sum_{\gamma_1}\mathcal{H}_{r-1}^{(1)}(\gamma_1\cdot \tz,t)+\sum_{\gamma_2}\mathcal{H}_{r-1}^{(1)}(\gamma_2\cdot \tz,t)\right)\\
    =\left(\mathfrak{p}^{\frac{q^{r-2}-1}{q-1}}\left(\mathfrak{p}(t)\sum_{\gamma_1}\mathcal{H}_{r-1}(\gamma_1\cdot \tz,t)+\sum_{\gamma_2}\mathcal{H}_{r-1}(\gamma_2\cdot \tz,t)\right)\right)^{(1)}\\= \left(\mathcal{T}_{\mathfrak{p},r-1,\frac{q^{r-2}-1}{q-1},1}(\mathcal{H}^{(1)}_{r-1}(\tz,t))\right)^{(1)}
\end{multline*}
where $\gamma_1$ runs through elements of $ \cup_{\ell=1}^{r-2}B_{\ell,r-1}$ and $\gamma_2$ runs through elements of $B_{r-1,r-1}$.
Hence, $\alpha=\mathfrak{p}^{1+q+\cdots+q^{r-2}}$, finishing the proof of the theorem.    
\end{proof}

The next corollary, which is our second main result, is an immediate consequence of Lemma \ref{L:funcg}, Corollary \ref{C:HeckeRed} and Theorem \ref{T:Hecke}.
\begin{corollary}\label{C:main2}
For $n\geq r-1$, $\mathcal{H}_r(\zz,\theta^{q^n})$ is a Hecke eigenform and
\[
T_{\mathfrak{p},r}(\mathcal{H}_r)(\zz,\theta^{q^n})=\mathfrak{p}^{1+q+\dots+q^{r-2}}\mathcal{H}_r(\zz,\theta^{q^n}).
\]
\end{corollary}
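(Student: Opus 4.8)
The plan is to deduce the corollary by combining the eigenform identity for the vectorial form $\mathcal{G}_r$ from Theorem \ref{T:Hecke} with the specialization bridge furnished by Corollary \ref{C:HeckeRed}. Throughout I set $k := \frac{q^{r-1}-1}{q-1}$ and work with $\rho = \rho_1$, i.e.\ $m=1$.

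First I would record two structural facts. By Lemma \ref{L:funcg}, the function $\mathcal{H}_r(\cdot,t)$ is precisely the last entry of $\mathcal{G}_r(\cdot,t)$, and $\mathcal{G}_r(\cdot,t)\in\mathbb{M}_{k}(\rho)$. Next I would verify the hypothesis of Corollary \ref{C:HeckeRed} for $\mathcal{P}=\mathcal{G}_r$: by Theorem \ref{P:funcH}(ii), for every $n\geq r-1$ the specialization $\mathcal{H}_r(\zz,\theta^{q^n})$ lies in $\mathcal{M}_{k+q^n}^{1}$, so in particular the last entry of $\mathcal{G}_r$ satisfies $\mathcal{P}_r(\zz,\theta^{q^n})\in\mathcal{M}_{k+q^n}$. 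Corollary \ref{C:HeckeRed} then applies and tells us that, upon substituting $t=\theta^{q^n}$, the last-coordinate operator $\mathcal{T}_{\mathfrak{p},r,k,1}$ of the VDMF Hecke operator $\mathrm{T}_{\mathfrak{p},r}$ induces the scalar Hecke operator $T_{\mathfrak{p},r}$ acting on $\mathcal{M}_{k+q^n}$.

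With this in place, the remaining steps are formal. I would invoke Theorem \ref{T:Hecke}, which gives
\[
\mathrm{T}_{\mathfrak{p},r}(\mathcal{G}_{r})(\zz,t)=\mathfrak{p}^{1+q+\cdots+q^{r-2}}\mathcal{G}_{r}(\zz,t),
\]
and read off the last coordinate of this vector identity to obtain
\[
\mathcal{T}_{\mathfrak{p},r,k,1}(\mathcal{H}_r)(\zz,t)=\mathfrak{p}^{1+q+\cdots+q^{r-2}}\mathcal{H}_r(\zz,t).
\]
Finally I would specialize at $t=\theta^{q^n}$. On the left, by the previous paragraph this becomes $T_{\mathfrak{p},r}(\mathcal{H}_r(\cdot,\theta^{q^n}))(\zz)$, while on the right the eigenvalue $\mathfrak{p}^{1+q+\cdots+q^{r-2}}$ is an element of $A$ and hence independent of $t$, so it is unchanged by the substitution. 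This yields the asserted identity $T_{\mathfrak{p},r}(\mathcal{H}_r)(\zz,\theta^{q^n})=\mathfrak{p}^{1+q+\cdots+q^{r-2}}\mathcal{H}_r(\zz,\theta^{q^n})$ and exhibits $\mathcal{H}_r(\zz,\theta^{q^n})$ as a Hecke eigenform.

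The step demanding the most care is the specialization bridge itself, namely confirming that taking the last coordinate of $\mathrm{T}_{\mathfrak{p},r}$ and then setting $t\mapsto\theta^{q^n}$ genuinely reproduces the honest scalar operator $T_{\mathfrak{p},r}$ on the correct weight space $\mathcal{M}_{k+q^n}$. This is exactly the content of Corollary \ref{C:HeckeRed}: by Lemma \ref{L:Heckelast} the last coordinate carries the extra twisting factors $p(t)^{m}$ and $p(t)^{m-1}$, and for $m=1$ one uses $p(\theta^{q^n})=\mathfrak{p}^{q^n}$ together with the weight shift from $k$ to $k+q^n$ to match the specialized slash operators with those in Definition \ref{D:Hecke}. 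Since the membership $\mathcal{H}_r(\cdot,\theta^{q^n})\in\mathcal{M}_{k+q^n}$ from Theorem \ref{P:funcH}(ii) is precisely what places the specialized form in the domain of $T_{\mathfrak{p},r}$, no further argument beyond invoking these two results is required.
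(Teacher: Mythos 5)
Your proposal is correct and follows exactly the paper's route: the corollary is stated there as an immediate consequence of Corollary \ref{C:HeckeRed} and Theorem \ref{T:Hecke}, and you have simply spelled out the details (verifying the hypothesis of Corollary \ref{C:HeckeRed} via Theorem \ref{P:funcH}(ii), reading off the last coordinate, and specializing at $t=\theta^{q^n}$).
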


\end{document}